\newtheorem{remark}{Remark}[section]
\newtheorem{lemma}{Lemma}[section]
\newtheorem{proposition}{Proposition}[section]
\newtheorem{theorem}{Theorem}[section]
\newtheorem{definition}{Definition}[section]
\numberwithin{equation}{section}
\newcommand{\ep}{\varepsilon}
\newcommand{\us}{\upsilon}
\newcommand{\Om}{\varOmega}
\newcommand{\D}{\varDelta}
\newcommand{\E}{\mathcal{E}}
\newcommand{\p}{\partial}
\newcommand{\V}{\mathbb{V}}
\newcommand{\h}{\eta_{\V^n}}
\newcommand{\hh}{\eta_{\V^{n-1}}}
\newcommand{\het}{\eta_{\widehat\V^n}}
\newcommand{\R}{\mathcal{R}}
\newcommand{\Rr}{R}
\newcommand{\I}{\mathcal{I}}
\newcommand{\bg}{\bar{g}_n}
\newcommand{\II}{\mathrm{I}}
\newcommand{\T}{\mathcal{T}}
\newcommand{\iu}{\mathrm{i}}
\newcommand{\Pro}{\mathcal{P}}
\newcommand{\dtd}{\bar\p W^{n-\frac 1 2}}
\newcommand{\TT}{\mathrm{T}}
\newcommand{\Ss}{\mathrm{S}}
\newcommand{\Gc}{\mathcal{G}}
\newcommand{\Gn}{\mathcal{G}_{n}}
\newcommand{\Gnm}{\mathcal{G}_{n-1}}
\DeclareMathOperator\Rea{\mathrm{Re}}
\DeclareMathOperator\Ima{\mathrm{Im}}
\begin{document}
%
%
%
%
\title[A posteriori error control \& adaptivity for LS]
{A posteriori error control \& adaptivity\\ for Crank-Nicolson finite element approximations\\ for the linear Schr\"odinger equation }
%
\date{\today}
\author{Theodoros Katsaounis}
\address[Theodoros Katsaounis]{Department of Applied Mathematics\\
University of Crete\\
71409 Heraklion-Crete, Greece  \\
\textnormal{and} Institute of Applied and Computational
Mathematics, \\ FORTH, Vassilika Vouton\\ GR 700$\,$13 Heraklion-Crete, Greece
}
\email{thodoros@tem.uoc.gr}
\author{Irene Kyza}
\address[Irene Kyza]{Institute of Applied and Computational Mathematics\\ Foundation of Research \& Technology-Hellas\\
Nikolaou Plastira 100, Vassilika Vouton\\
GR 700$\,$13 Heraklion-Crete, Greece} \email{kyza@iacm.forth.gr}

\thanks{The research of I.K. was supported  by the  European Social Fund (ESF) -European Union (EU) and National Resources of the Greek State within the framework of the Action ``Supporting Postdoctoral Researchers''  of the Operational Programme ``Education and Lifelong Learning (EdLL)''.
Th. Katsaounis was partially supported by European Union FP7 program Capacities (Regpot 2009-1), through ACMAC (http://www.acmac.uoc.gr).} 
\keywords{A posteriori error estimates, adaptive algorithm,  Crank-Nicolson finite element schemes, linear Schr\"{o}dinger equations, modified elliptic reconstruction,  time-space
reconstructions}
\begin{abstract}
We derive optimal order a posteriori error estimates for fully discrete approximations of linear Schr\"odinger-type equations, in the $L^\infty(L^2)-$norm. For the discretization in time we use the Crank-Nicolson method, while for the space discretization we use finite element spaces that are allowed to change in time. The derivation of the estimators is based on a novel elliptic reconstruction that leads to estimates which reflect the physical properties of Schr\"odinger equations. The final estimates are obtained using energy techniques and residual-type estimators. Various numerical experiments for the one-dimensional linear Schr\"odinger equation in the semiclassical regime, verify and complement our theoretical results. The numerical implementations are performed with both uniform partitions and adaptivity in time and space. For adaptivity, we further develop and analyze  an existing time-space adaptive algorithm to the cases of Schr\"odinger equations. The adaptive algorithm reduces the computational cost  substantially and provides efficient error control for the solution and the observables of the problem, especially for small values of the Planck constant.
\end{abstract}
\maketitle
%
%
\section{Introduction}

In this paper  we focus on the  a posteriori error control and adaptivity for fully discrete Crank-Nicolson finite element (CNFE) schemes for the general form of linear Schr\"odinger equation: 
%
%
\begin{equation}
\label{LS1} \left \{
\begin{aligned}
&\p_tu-\mathrm{i}\alpha\varDelta u+\mathrm{i}g(x,t)u = f(x,t)    &&\quad\mbox{in $\varOmega\times (0,T]$,}& \\
&u=0  &&\quad\mbox{on $\partial \varOmega\times (0,T]$,}&  \\
&u(\cdot,0)=u_0 &&\quad\mbox{in $\varOmega$},&
\end{aligned}
\right.
\end{equation}
where $\Om$ is a convex ``polygonal'' domain in $\mathbb{R}^d$, $1\le d\le3$, with boundary $\p\Om$, and $0<T<\infty.$ In \eqref{LS1}, $\alpha$ is a positive constant, $g:\varOmega\times(0,T]\to\mathbb{R}$ and $f:\varOmega\times(0,T]\to\mathbb{C}$ are given functions and  $u_0:\varOmega\to\mathbb{C}$ is a given initial value. 
%

A special case of \eqref{LS1} is the so-called linear Schr\"odinger equation in the semiclassical regime:
\begin{equation}
\label{semiclassical}
\p_t u-\mathrm{i}\frac{\ep}{2}\D u+\frac{\mathrm{i}}{\ep}V(x,t)u=0,
\end{equation}
with high frequency initial data. It is clear that \eqref{semiclassical} 
can be obtained from \eqref{LS1} 
by setting $\alpha:=\frac \ep 2$, $g:=\frac 1\ep V$ and $f\equiv 0$. In \eqref{semiclassical}, $\ep$ ($0<\ep \ll 1$) is the scaled Planck constant, $V$ is 
an $L^\infty(L^\infty)$ time-dependent potential and $u$ is the wave function. The wave function $u$ is  used to define primary physical quantities, called \emph{observables} (\cite{BJM, GMMP}), such as the \emph{position density},
\begin{equation}
\label{posden}
N(x,t) :=|u(x,t)|^2,
\end{equation}
and the \emph{current density},
\begin{equation}
\label{curden}
J(x,t):= \Ima\big(\overline {u(x,t)}\nabla u(x,t)\big).
\end{equation}
Problems related to \eqref{semiclassical} are of great interest in physics and engineering. However, the solution of \eqref{semiclassical} is complicated from  the theoretical as well as the numerical analysis point of view. It is well known that for $\ep$ small (close to zero), the solution of \eqref{semiclassical} oscillates with wavelength $\mathcal{O}(\ep)$, preventing $u$ to converge strongly as $\ep\to 0$. Because of this, standard numerical methods fail to correctly approximate  $u$ and the observables, unless very fine mesh sizes and time steps are used.  
In particular, previous works (cf., e.g., \cite{BJM, MPP, MPPS}) suggested that for standard finite element (FE) methods  there is a very restrictive dispersive relation connecting the mesh sizes (space and time) with parameter $\ep$; cf., e.g., \eqref{exact} below. This restrictive dispersive relation can be relaxed using the so-called time-splitting spectral methods, introduced earlier by Bao, Jin \& Markowich in \cite{BJM}, for the approximation of the solution of \eqref{semiclassical}.
%

In this paper,  our goal is to show that constructing adaptive algorithms based on rigorous a posteriori error control leads to CNFE schemes which are competitive to the best available methods for the approximation of the solution (and the observables) of the semiclassical Schr\"odinger equation \eqref{semiclassical}, and in general of linear Schr\"odinger equations of the form \eqref{LS1}.  It also permits, for the first time, realistic computations for rough potentials for the linear Schr\"odinger equation in the semiclassical regime.To achieve our goal, in the current work we:
\begin{enumerate}[$\bullet$]\itemsep=2pt
\item Provide rigorous a posteriori error analysis for \eqref{LS1} for CNFE approximations using FE spaces that are allowed to change in time;
\item Study the advantages of adaptivity through the obtained estimators for the efficient error control of \eqref{LS1}.
\end{enumerate}

Optimal order a posteriori error estimates for the heat equation for CNFE schemes with FE spaces that are allowed to change in time have been derived very recently by B\"ansch, Karakatsani \& Makridakis in \cite{BKM}.  However the extension of those ideas from the simple heat equation to the linear Schr\"odinger equation \eqref{LS1} is of increased difficulty due to the complex-value and multiscale nature of the problem.  Because of this, novel ideas and techniques are introduced. More precisely, our main contributions are:  
%
%
\begin{enumerate}[$\bullet$]\itemsep=2pt
\item Derivation of optimal order a posteriori error bounds  in the $L^\infty(L^2)-$norm for CNFE schemes for \eqref{LS1}. The fact that the analysis includes time-dependent potentials, makes the problem more challenging since there are no rigorous results for Schr\"odinger equations for such potentials. In addition the  existing  literature on a posteriori error analysis for problems with time-dependent operators of the form $\mathcal{A}(t):=-\alpha\D+g(x,t)$ is quite limited. To the best of our knowledge, only in \cite{CGM} the authors  consider similar operators. Moreover  the derived estimates hold for $L^\infty(L^\infty)-$type potentials as well, in contrast to the existing literature. In particular, existing results  require smooth $C^1(C^2)-$type potentials. However, this regularity requirement  on the potential is rather restrictive from applications' point of view. Including $L^\infty(L^\infty)$ time-dependent potentials in the analysis is important for another reason: It can be considered as the first step for the a posteriori error control of nonlinear Schr\"odinger (NLS) equations. More precisely, the relaxation  scheme introduced by Besse in \cite{Besse} suggests that a posteriori error bounds for linear Schr\"odinger equations with $L^\infty(L^\infty)-$type time-dependent potentials is essential for the efficient approximation of the solution of certain NLS equations.
\item Introduction of a \emph{novel elliptic reconstruction} leading to upper bounds that do not involve  the global $L^\infty(L^\infty)-$norm of $g$, and thus, to bounds that do reflect the physical properties of the problem.The elliptic reconstruction was developed by Makridakis \& Nochetto in \cite{MN1} to derive optimal order $L^\infty(L^2)$ a posteriori error bounds for FE spatial discrete schemes for the heat equation using energy techniques. A straightforward generalization of this notion of the elliptic reconstruction to Schr\"odinger equations leads to estimates that involve the $L^\infty(L^\infty)-$norm of the potential. Consequently, the obtained estimates  are practically useless and adaptivity is inefficient, even  in the simplest case of constant potentials.  Therefore, proposing a modified elliptic reconstruction based on the physical properties of the problem under consideration is crucial for the efficient error control of \eqref{LS1} (and so \eqref{semiclassical}). Additionally, the new ideas developed for this purpose might be useful for other problems as well, such as convection-diffusion or reaction-diffusion problems.
\item  A detailed numerical study on the reliability and robustness of the a posteriori estimators through a \emph{time-space adaptive algorithm}.  Our starting point is the adaptive algorithm proposed in \cite{NSV}, adapted to the linear Schr\"odinger equation, \eqref{semiclassical}.  The  a posteriori estimators derived in this work are on the solution $u$ of \eqref{LS1}. However, in many applications  observables like the position density \eqref{posden},  or the current density \eqref{curden} are far more important than the solution itself.  Thus, we introduce an  appropriate  modification of  the a posteriori estimators and the adaptive algorithm. This modification is based on a heuristic idea and the results concerning the observables are very encouraging. Overall, the adaptive algorithm  reduces the computational cost substantially and provides efficient error control of   $u$ and the observables for small values of the Planck constant $\ep$.   
It is very difficult to obtain such results via standard techniques and without adaptivity. We point out that our purpose is not to prove convergence and optimality of the considered time-space adaptive algorithm, but rather to show that adaptivity based on rigorous a posteriori error control can be proven beneficial for the approximation of the solution (and the observables) of the linear semiclassical Schr\"odinger equation \eqref{semiclassical}. In addition, it is to be emphasized that as long as the adaptive algorithm converges, we can guarantee rigorously, based on the a posteriori error analysis, that \emph{total error remains below a given tolerance}. 
\end{enumerate}

For parabolic problems,  a number of adaptive algorithms exists in the literature; cf., e.g., \cite{CF,ALBERT} and the references therein. However, convergence and optimality of time-space adaptive algorithms are very delicate and difficult issues.  In the literature exists only one proven convergent time-space adaptive algorithm for evolution problems and can be found in \cite{KMSS}. This algorithm is appropriate for the heat equation and backward Euler FE schemes and it is not clear how to generalize it to other problems and higher order in time methods.

%

Despite the fact that problem \eqref{LS1} (and thus \eqref{semiclassical}) is linear, a posteriori error bounds and adaptive algorithms for linear Schr\"odinger equations are very limited in the literature.  In particular, a posteriori error estimates in the $L^\infty(L^2)-$norm for fully discrete CNFE schemes have been proven earlier by D\"orfler in \cite{D}; these estimates are  first order accurate in time, thus not optimal. Using these estimates, D\"orfler also proposes an adaptive algorithm in \cite{D}.  In \cite{Kyza} (see also \cite{PhDKyza}), we considered only time-discrete approximations and we managed to prove optimal order a posteriori error estimates for \eqref{LS1} in the $L^\infty(L^2)$ and  $L^\infty(H^1)-$norms. This was achieved using the Crank-Nicolson reconstruction proposed by Akrivis, Makridakis  \& Nochetto in \cite{AMN1}. Similar estimates for \eqref{LS1}, using an alternative reconstruction, proposed by Lozinski, Picasso \& Prachittham in \cite{LPP}, can be found in \cite{PhDKyza}. To the best of our knowledge, optimal order a posteriori error estimates for fully discrete CNFE schemes do not exist  in the literature. Some preliminary results to that direction can be found in \cite{PhDKyza}. However, the a posteriori estimators derived in \cite{PhDKyza} are scaled by the global $L^\infty(L^\infty)-$norm of $g$. Hence, as already mentioned, the derived estimators do not reflect the physical properties of the problem, which makes adaptivity through these estimates not reliable. 

A posteriori error estimates in the $L^\infty(L^2)-$norm have been proven earlier in \cite{KMP} for uniform partitions and  the time-splitting spectral methods for the linear Schr\"odinger equation in the semiclassical regime \eqref{semiclassical}. In \cite{KMP}, only the one-dimensional case in space is studied and the analysis, as in  \cite{BJM}, permits only time-independent potentials,  without being obvious how the theory can be extended to time-dependent potentials.  In addition, the time-spectral methods require smooth potentials; the particular analysis is not applicable for $L^\infty(L^\infty)-$type potentials.

The analysis of the current paper is based on the introduction of appropriate space-time reconstructions.  Such reconstructions for CNFE methods and FE spaces that are allowed to change in time were introduced, for the first time, very recently, by B\"ansch, Karakatsani \& Makridakis in \cite{BKM}, for the proof of optimal order a posteriori estimates in the $L^\infty(L^2)-$norm for the heat equation.  To define those time-space reconstructions, the authors combined the idea of the elliptic reconstruction in \cite{MN1} with the Crank-Nicolson reconstruction of \cite{AMN1,LPP}. The notion of the elliptic reconstruction has also been used earlier in \cite{LM} and \cite{GLM} for the derivation of optimal order a posteriori error estimates for backward Euler FE schemes for the heat and the wave equation, respectively. The reconstruction technique is a useful tool for  deriving optimal order a posteriori error bounds; usually, this is not feasible via a direct comparison of the exact and the numerical solution; cf., e.g., \cite{D, Verfurth}. In our context, time-space reconstructions can be defined through the novel elliptic reconstruction we introduce and the Crank-Nicolson reconstruction of \cite{AMN1}. 

More precisely, the paper is organized as follows. In Section~\ref{prelim}, we introduce  notation, the variational formulation of problem and the fully discrete scheme. We propose the novel elliptic reconstruction and discuss its properties.  
With the aim of this new elliptic reconstruction, we then define appropriate time-space reconstructions. The main theoretical results  are stated in Section~\ref{apost}, where the a posteriori analysis is developed and optimal order error bounds are derived using energy techniques, residual-type error estimators and the properties of the reconstructions. The two last sections are devoted to the numerical investigation of the efficiency of the estimators. In particular, in Section~\ref{unif}, we validate numerically the optimal order of convergence of the estimators using uniform partitions. For the linear Schr\"odinger  equation in the semiclassical regime, we verify numerically that the estimators  have the expected behavior with respect to the scaled parameter $\ep$. Finally, in Section~\ref{adapt}, we appropriately modify and apply to the one-dimensional semiclassical Schr\"odinger equation  a time-space adaptive algorithm described in \cite{CF,ALBERT} (see also \cite{NSV}). We further develop the algorithm and we make it applicable for the approximation not only of the exact solution $u$ but also for the observables, and we discuss in detail the benefits of adaptivity for equations of the form \eqref{semiclassical}. 
%
%
\section{Preliminaries}\label{prelim}
\subsection{The continuous problem}
Problem \eqref{LS1} can be rewritten equivalently  in variational form as 
\begin{equation}
\label{LS2} \left \{
\begin{aligned}
&\langle \p_tu(t),\us\rangle+\mathrm{i}\alpha\langle\nabla
u(t),\nabla\us\rangle+\mathrm{i}\big\langle g(t)u(t),\us\big\rangle=\big\langle f(t),\us\big\rangle,
&&\forall \us\in H_0^1(\varOmega),\, t\in[0,T],& \\
&u(\cdot,0)=u_0 &&\mbox{in $\overline{\varOmega}$},&
\end{aligned}
\right.
\end{equation}
where $\langle\cdot,\cdot\rangle$ denotes the $L^2-$inner product, or the $H^{-1}-H_0^1$ duality pairing, depending on the context.  We also denote by $\|\cdot\|$ the norm in $L^2(\Om).$  It is well known that, if $g\in C^1\big([0,T];C^1(\overline\varOmega)\big),$ $f\in L^2\big([0,T];L^2(\varOmega)\big),\,f_t\in L^2\big([0,T];H^{-1}(\varOmega)\big),$ and $u_0\in H_0^1(\Omega),$
%
%
%
then problem \eqref{LS2} admits a unique weak solution  $u\in C\big([0,T];H_0^1(\varOmega)\big)$ with  $u_t\in C\big([0,T];$
$H^{-1}(\varOmega)\big)$; cf., e.g., \cite[pages 620--630]{Pozzi, DLL}. We thus assume that the data of \eqref{LS1} have the necessary regularity to guarantee the existence of a unique weak solution of \eqref{LS2}. We emphasize  that the a posteriori error estimates derived in the sequel, remain valid for $g\in L^\infty\Big(\Om\times(0,T)\Big)$ as well, provided that \eqref{LS1} is well-posed. In other words, in contrast to the existing analyses, ours includes rough potentials as well, under the knowledge of the well-posedness of \eqref{LS1}. To avoid making the forthcoming analysis more technical,
we  further assume that $g$ satisfies
\begin{equation}
\label{addcond}
\sup_{x\in\Om} g(x,t)\ge -\inf_{x\in\Om} g(x,t),\quad \forall t\in[0,T].
\end{equation}
Condition \eqref{addcond} is not restrictive from applications' point of view, as, in most applications, $g$ denotes a nonnegative potential and thus \eqref{addcond} is automatically satisfied. 
%
\subsection{The method}
%
We consider a partition $0=:t_0<t_1<\cdots<t_N:=T$ of $[0,T]$, and let $I_n:=(t_{n-1},t_n]$ and $k_n:=t_n-t_{n-1},\, 1\le n\le N,$ denote the subintervals of $[0,T]$ and the time steps, respectively.  Let also $ k:=\max_{1\le n\le N}k_n$. We discretize \eqref{LS1} by a Galerkin finite element method. To this end, we introduce a family $\{\T_n\}_{n=0}^N$ of conforming shape-regular triangulations of $\Om$. We further assume that each triangulation $\T_n,\, 1\le n\le N,$ is a refinement of a macro-triangulation of $\Om$ and that $\T_{n-1}$ and $\T_n$ are compatible. Two triangulations are said to be compatible if they are derived from the same macro-triangulation by an admissible refinement procedure. For precise definitions of these properties of the family $\{\T_n\}_{n=0}^N$, we refer to \cite{LM, DLM}. Note that the triangulations are allowed to change arbitrarily from one  step to another, provided they satisfy the aforementioned compatibility conditions. These conditions are minimal and allow for heavily graded meshes and adaptivity.
%
Additionally, the forthcoming analysis is applicable without any quasiuniformity type assumptions on the mesh and without any restrictions on the sizes of neighboring elements of the triangulation.
%

For an element $K\in\T_n$, we denote its boundary by $\p K$. Let $h_K$ be the diameter of $K\in\T_n$ and $h:=\max_{0\le n\le N}\max_{K\in\T_n}h_K$. Let also $\varSigma_n(K)$ be the set of internal sides of $K\in\T_n$ (points in $d=1$, edges in $d=2$ and faces in $d=3$) and define $\varSigma_n:=\bigcup_{K\in\T_n}\varSigma_n(K).$ To any side $e\in\varSigma_n$, we associate a unit vector $\bm{n}_e$ on $e$ and for $x\in e$ and a function $\us$, we define 
$$J[\nabla \us](x):=\lim_{\delta\to 0}\Big[\nabla \us(x+\delta\bm{n}_e)-\nabla \us(x-\delta\bm{n}_e)\Big]\cdot\bm{n}_e.$$

To each triangulation $\T_n,$ we associate the finite element space $\V^n$,
$$\V^n:=\{\varPhi_n\in H_0^1(\Om):\forall K\in\T_n,\, \varPhi_n|_K\in\mathbb{P}^r\},$$
where $\mathbb{P}^r$ denotes the space of polynomials in $d$ variables of degree at most $r$.

With $\widehat{\T}_n:=\T_n\wedge\T_{n-1}$ we denote the finest common coarsening triangulation of $\T_n$ and $\T_{n-1}$ and by $\widehat{\V}^n:=\V^n\bigcap\V^{n-1}$ its corresponding finite element space. Finally, let  $\check{\varSigma}_n:=\varSigma_n\bigcup\varSigma_{n-1},$ and for
$K\in\widehat{\T}_n,$ let $\check{\varSigma}_K^n:=\check{\varSigma}_n\bigcap
K,$ where the element $K\in\widehat{\T}_n$ is taken to be closed.

\begin{definition}[discrete Laplacian]
For $0\le n\le N,$ the discrete version $-\D^n:\V^n\to\V^n$ of the Laplace operator $-\D$ onto $\V^n$ is defined as
\begin{equation}
\label{discrLap}
\langle-\D^n\us,\varPhi_n\rangle=\langle\nabla\us,\nabla\varPhi_n\rangle,\quad\forall\varPhi_n\in\V^n.
\end{equation}
\end{definition}

We now discretize problem \eqref{LS1} by a \emph{modified Crank-Nicolson-Galerkin} scheme, introduced earlier for the heat equation in \cite{BKM}.  Given an approximation $U^{n-1}\in\V^{n-1}$ to the exact solution at $t^{n-1}$ we define 
approximation $U^n\in\V^n$ to the exact solution $u$ at the nodes $t_n,\, 0\le n\le N,$ by the numerical method:
\begin{equation}
\label{CNGS}
\begin{aligned}
\frac{U^n-\varPi^n U^{n-1}}{k_n}-\iu\alpha\frac{\varPi^n\D^{n-1}U^{n-1}+\D^nU^n}{2}&+\iu\Pro^n\Big(g(t_{n-\frac{1}{2}})U^{n-\frac{1}{2}}\Big)
=\Pro^nf(t_{n-\frac{1}{2}}),
\end{aligned}
\end{equation}
for $1\le n\le N$, with $U^0:=\Pro^0 u_0$ in $\Om$. In \eqref{CNGS}, $t_{n-\frac 12}:=\frac{t_{n-1}+t_n}{2}$, $U^{n-\frac 12}:=\frac{U^{n-1}+U^n}{2}$, and $\Pro^n: L^2(\Om)\to\V^n$, $\varPi^n:\V^{n-1}\to\V^n$ are appropriate projections or interpolants. In Sections~\ref{unif}, \ref{adapt}, where we discuss the numerical experiments, $\Pro^n$ and $\varPi^n$ are taken to be the $L^2-$projection. However, the theory is still valid for other choices of $\Pro^n$ and $\varPi^n$ (cf.\ \cite{BKM,BKM2}), and therefore we  consider the method in this  general setting. Another non-standard term appearing in \eqref{CNGS} is $\varPi^n\D^{n-1}U^{n-1}$ instead of $\D^nU^{n-1}$. As it was observed in \cite{BKM, BKM2}, considering $\D^nU^{n-1}$ instead of $\varPi^n\D^{n-1}U^{n-1}$ may lead to oscillatory behavior of the obtained a posteriori estimators. For this reason, we  consider  the modified scheme \eqref{CNGS} instead of the standard one.
%
\subsection{Novel elliptic reconstruction--Residual-type estimators}
The elliptic reconstruction was originally  introduced by Makridakis \& Nochetto in \cite{MN1} for the proof of optimal order a posteriori error estimates in space  in the $L^\infty(L^2)-$norm for evolution problems, using energy techniques. It was also one of the main tools in the a posteriori error analysis of the heat equation for 
 Crank-Nicolson fully discrete schemes; cf.\ \cite{BKM}.  For the linear Schr\"odinger equation \eqref{LS1}, we introduce a new type of elliptic reconstruction which reflects the physical properties of the problem, and in particular the physical properties of the semilcassical Schr\"odinger equation \eqref{semiclassical}.
To this end, we introduce, in each $I_n$, the constant
\begin{equation}
\label{average}
\bg:=\frac 1 2\big[\sup_{x\in\Om} g(x,t_{n-\frac 12})+\inf_{x\in\Om} g(x, t_{n-\frac 1 2})\big].
\end{equation}
The main reason for the choice of \eqref{average} is that the knowledge on ``how far from $\bg$ is $g$ in $\Om$'' gives qualitative information on the behavior of the exact solution, especially in the case of linear Schr\"odinger  equation in the semiclassical regime. In order for the elliptic reconstruction we introduce below to be well defined, we need $\bg\ge 0$,  which is automatically satisfied due to \eqref{addcond}.
\begin{definition}[novel elliptic reconstruction]\label{mellipticrec}
For fixed $V_n\in\V^n$ we define the elliptic reconstruction $\R^nV_n\in H_0^1(\Om)$ of $V_n$ to be the weak solution of the elliptic problem
\begin{equation}
\label{ellipticrec}
\alpha \langle\nabla\R^n V_n,\nabla\phi\rangle+\bg\langle\R^n V_n,\phi\rangle=\big\langle(-\alpha\D^n+\bg)V_n,\phi\big\rangle,\quad\forall\phi\in H_0^1(\Om).
\end{equation}
\end{definition}

As we shall see in the sequel, the above modified elliptic reconstruction will allow us to obtain qualitatively  better a posteriori error estimators compared to those obtained using the standard elliptic reconstruction; cf., \cite{PhDKyza}. In fact, the $\sup_{x\in\Om}|g(x,t)|$ that appears in the standard results of a priori error analysis, can now be replaced, due to \eqref{ellipticrec}, by $\sup_{x\in\Om}|g(x,t)-\bg|,\, t\in I_n,$  leading to better constants. 
A very interesting question here, that needs further investigation, is whether the global constant $\sup_{x\in\Om}|g(x,t)-\bg|$ can be localized in each element. This will not only lead to better constants in the final a posteriori error estimators, but also might give the inspiration of proposing appropriate adaptive strategies.

Using \eqref{discrLap}, we see that $\R^n$ satisfies the orthogonality property
\begin{equation}
\label{orthogonal}
\alpha\big\langle\nabla(\R^n-\II)V_n,\nabla\varPhi_n\big\rangle+\bg\big\langle(\R^n-\II)V_n,\varPhi_n\big\rangle=0,\quad\forall\varPhi_n\in\V_n.
\end{equation}

Let now $z$ be the weak solution of the following elliptic problem
\begin{equation}
\label{dualprob1}
\langle\nabla z,\nabla\phi\rangle=\big\langle(\R^n-\II)V_n,\phi\rangle,\quad\forall\phi\in H_0^1(\Om),
\end{equation}
and let  $\I_n z$ be  its Cl\'{e}ment-type interpolant in $\V^n$ (for the definition of the Cl\'ement-type interpolant and its properties we refer to \cite{BS, Clement, SZ}). 
Then we can prove the next auxiliary lemma.
\begin{lemma}\label{auxiliary}
Let $z$ be the solution of \eqref{dualprob1} and $\I_n z$ its Cl\'ement-type interpolant. Then, for all $V_n\in\V^n$, we have the following estimate for  $\R^nV_n$
\begin{equation}
\label{auxil1}
\|(\R^n-\II)V_n\|^2\le\big|\big\langle-\D^nV_n,z-\I_nz\big\rangle-\big\langle\nabla V_n,\nabla(z-\I_n z)\big\rangle\big|.
\end{equation}
\end{lemma}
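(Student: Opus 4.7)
The plan is to combine the duality in \eqref{dualprob1} with a reformulation of \eqref{ellipticrec}, exploiting the sign condition $\bg\ge 0$ guaranteed by \eqref{addcond} together with the discrete Laplacian identity \eqref{discrLap}.

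First, I would subtract $\alpha\langle\nabla V_n,\nabla\phi\rangle+\bg\langle V_n,\phi\rangle$ from both sides of \eqref{ellipticrec} to obtain the error equation
$$\alpha\langle\nabla(\R^n-\II)V_n,\nabla\phi\rangle+\bg\langle(\R^n-\II)V_n,\phi\rangle=\alpha\langle-\D^n V_n,\phi\rangle-\alpha\langle\nabla V_n,\nabla\phi\rangle,$$
now valid for \emph{every} $\phi\in H_0^1(\Om)$, not only for $\varPhi_n\in\V^n$ as in \eqref{orthogonal}. Next, I would test the dual problem \eqref{dualprob1} with $\phi=(\R^n-\II)V_n$ to get $\langle\nabla z,\nabla(\R^n-\II)V_n\rangle=\|(\R^n-\II)V_n\|^2$, and with $\phi=z$ to get $\langle(\R^n-\II)V_n,z\rangle=\|\nabla z\|^2$. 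Substituting $\phi=z$ in the error equation and using these two identities (together with the reality of $\alpha$ and $\bg$) produces
$$\alpha\|(\R^n-\II)V_n\|^2+\bg\|\nabla z\|^2=\alpha\langle-\D^n V_n,z\rangle-\alpha\langle\nabla V_n,\nabla z\rangle.$$

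The crucial observation is that $\bg\|\nabla z\|^2\ge 0$---exactly here is where \eqref{addcond} pays off---so the extra term contributed by the novel elliptic reconstruction has a definite sign and may simply be dropped. Finally, since $\I_n z\in\V^n$, the discrete Laplacian identity \eqref{discrLap} yields $\langle-\D^n V_n,\I_n z\rangle-\langle\nabla V_n,\nabla\I_n z\rangle=0$; subtracting this vanishing quantity from the right-hand side, dividing by $\alpha>0$, and taking absolute values gives exactly \eqref{auxil1}.

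The main point requiring attention is recognising that the nonvanishing $\bg$-term arising from the new elliptic reconstruction reduces, via the dual problem, to the nonnegative real number $\bg\|\nabla z\|^2$ rather than to an indefinite quantity that would have to be estimated; condition \eqref{addcond} is tailor-made for this step, and without it the standard ``coercivity plus duality'' route would fail. Beyond this, the argument is routine bookkeeping, with only mild care needed regarding the sesquilinearity of the $L^2$-pairing since $V_n$, and hence $z$, is complex valued.
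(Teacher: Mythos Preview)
Your proof is correct and is essentially the same argument as the paper's, just organized in a slightly different order: the paper starts from the duality identity $\|(\R^n-\II)V_n\|^2=\langle\nabla(\R^n-\II)V_n,\nabla z\rangle$ and then invokes \eqref{ellipticrec} and \eqref{orthogonal}, whereas you begin from the error form of \eqref{ellipticrec} and insert $\I_n z$ via \eqref{discrLap}; both routes arrive at the identical identity $\|(\R^n-\II)V_n\|^2+\frac{\bg}{\alpha}\|\nabla z\|^2=\langle-\D^n V_n,z-\I_n z\rangle-\langle\nabla V_n,\nabla(z-\I_n z)\rangle$ and drop the nonnegative $\bg$-term.
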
 
\begin{proof}
Using \eqref{dualprob1}, we obtain 
$$\|(\R^n-\II)V_n\|^2=\big\langle\nabla(\R^n-\II)V_n,\nabla z\big\rangle,$$
and thus, invoking the definition of the modified elliptic reconstruction \eqref{ellipticrec} and the orthogonality property \eqref{orthogonal}, we arrive at
$$\|(\R^n-\II)V_n\|^2=\langle-\D^n V_n,z-\I_n z\rangle-\big\langle\nabla V_n,\nabla(z-\I_n z)\big\rangle-\frac{1}{\alpha}\bg\big\langle(\R^n-\II)V_n,z\big\rangle.$$
Since both $\alpha$ and $\bg$ are positive, \eqref{auxil1} follows by  $\big\langle(\R^n-\II)V_n,z\big\rangle=\|\nabla z\|^2\ge 0$; cf.\ \eqref{dualprob1}.
\end{proof}
Since we use finite element spaces that are allowed to change from $t_{n-1}$ to $t_n$, we will need to work with quantities of the form $\|(\R^n-\II)V_n-(\R^{n-1}-\II)V_{n-1}\|$ for $V_n\in\V^n$ and $V_{n-1}\in\V^{n-1}.$ To estimate such a quantity, we consider the elliptic problem
$$\langle\nabla\hat z,\nabla\phi\rangle=\big\langle(\R^n-\II)V_n-(\R^{n-1}-\II)V_{n-1},\phi\big\rangle,\quad\forall\phi\in H_0^1(\Om)$$
with solution $\hat z$ and we denote by $\widehat\I_n\hat z$ its Cl\'{e}ment-type interpolant onto $\widehat\V_n.$
\begin{lemma}
For $V_n\in\V^n$ and $V_{n-1}\in\V^{n-1}$ we have that
\begin{equation}
\label{auxil2}
\begin{aligned}
\|(\R^n-\II)V_n-(\R^{n-1}-\II)V_{n-1}\|^2\le\big|&\langle\D^n V_n,\hat z-\widehat\I_n\hat z\rangle-\big\langle\nabla V_n,\nabla(\hat z-\widehat\I_n\hat z)\big\rangle\\
                                                                               &+\langle\D^{n-1}V_{n-1},\hat z-\widehat\I_n\hat z\rangle+\big\langle\nabla V_{n-1},\nabla(\hat z-\widehat\I_n\hat z)\big\rangle\big|.
\end{aligned}
\end{equation}
\end{lemma}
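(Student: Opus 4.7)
The plan is to run the one-step argument of Lemma 2.1 simultaneously at time levels $n$ and $n-1$ and then subtract, exploiting the fact that the test interpolant $\widehat\I_n\hat z$ lies in the common subspace $\widehat\V^n\subset\V^n\cap\V^{n-1}$.

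First, I would test the auxiliary dual problem for $\hat z$ against $\phi=(\R^n-\II)V_n-(\R^{n-1}-\II)V_{n-1}\in H_0^1(\Om)$ to obtain the ``energy identity''
$$\|(\R^n-\II)V_n-(\R^{n-1}-\II)V_{n-1}\|^2 = \langle\nabla(\R^n-\II)V_n,\nabla\hat z\rangle-\langle\nabla(\R^{n-1}-\II)V_{n-1},\nabla\hat z\rangle.$$
For each of the two terms on the right I would apply exactly the calculation used inside the proof of Lemma 2.1: rewrite $\langle\nabla\R^k V_k,\nabla\hat z\rangle$ using the defining equation \eqref{ellipticrec} of the elliptic reconstruction at level $k$, and then use the orthogonality \eqref{orthogonal} against $\widehat\I_n\hat z$ to eliminate the $\nabla\widehat\I_n\hat z$ contribution. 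The crucial point is that this second step is legal \emph{both} for $k=n$ and for $k=n-1$, since $\widehat\I_n\hat z\in\widehat\V^n$ belongs to $\V^n$ \emph{and} to $\V^{n-1}$; this is precisely why the finest common coarsening was introduced.

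Each of these two manipulations produces, in the fashion of Lemma 2.1, a clean residual of the form $\langle-\D^k V_k,\hat z-\widehat\I_n\hat z\rangle-\langle\nabla V_k,\nabla(\hat z-\widehat\I_n\hat z)\rangle$ together with a non-residual ``mass'' tail $-\frac{\bar g_k}{\alpha}\langle(\R^k-\II)V_k,\hat z\rangle$. Subtracting the $k=n-1$ identity from the $k=n$ one, the two residual pieces combine to form (up to signs absorbed by the absolute value) exactly the expression on the right of \eqref{auxil2}. Finally, testing the dual problem against $\hat z$ itself gives $\langle(\R^n-\II)V_n-(\R^{n-1}-\II)V_{n-1},\hat z\rangle=\|\nabla\hat z\|^2\ge 0$, which, together with $\bg,\bar g_{n-1}\ge 0$ guaranteed by \eqref{addcond}, is what allows the combined mass tail to be dropped when passing to $\le$.

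The main obstacle I anticipate is precisely the handling of these two mass-type tails. Because the averages $\bg$ and $\bar g_{n-1}$ are in general distinct, the combined tail $-\frac{\bg}{\alpha}\langle(\R^n-\II)V_n,\hat z\rangle+\frac{\bar g_{n-1}}{\alpha}\langle(\R^{n-1}-\II)V_{n-1},\hat z\rangle$ does not immediately factor as a single non-negative multiple of $\|\nabla\hat z\|^2$. The cleanest remedy is to split this tail as $-\frac{\bg}{\alpha}\|\nabla\hat z\|^2+\frac{\bar g_{n-1}-\bg}{\alpha}\langle(\R^{n-1}-\II)V_{n-1},\hat z\rangle$, use the first term (non-positive) for dropping, and absorb the correction by noting that the same orthogonality argument applied to $\R^{n-1}$ with $\bg$ in place of $\bar g_{n-1}$ (which is still valid when testing against $\widehat\I_n\hat z\in\V^{n-1}$) yields the required cancellation. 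Once this bookkeeping is done, taking absolute values produces \eqref{auxil2}.
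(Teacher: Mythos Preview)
Your overall strategy—test the dual problem for $\hat z$, run the calculation of Lemma~\ref{auxiliary} at both levels using that $\widehat\I_n\hat z\in\widehat\V^n\subset\V^n\cap\V^{n-1}$, and then drop a nonpositive tail—is precisely the route the paper has in mind when it says the proof is ``similar to the proof of Lemma~\ref{auxiliary}''. Through the appearance of the residual terms and of the combined mass tail
\[
-\tfrac{\bg}{\alpha}\big\langle(\R^n-\II)V_n,\hat z\big\rangle+\tfrac{\bar g_{n-1}}{\alpha}\big\langle(\R^{n-1}-\II)V_{n-1},\hat z\big\rangle
\]
your argument is correct, and you are right to flag this tail as the only delicate point.

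Your proposed remedy for the tail, however, does not close. After peeling off $-\tfrac{\bg}{\alpha}\|\nabla\hat z\|^2\le 0$, the remainder $\tfrac{\bar g_{n-1}-\bg}{\alpha}\langle(\R^{n-1}-\II)V_{n-1},\hat z\rangle$ has no definite sign: $\bar g_{n-1}-\bg$ may be of either sign, and $\langle(\R^{n-1}-\II)V_{n-1},\hat z\rangle$ is \emph{not} equal to $\|\nabla\hat z\|^2$ here (only the full difference pairs with $\hat z$ to give that). Your suggestion to invoke ``the same orthogonality argument applied to $\R^{n-1}$ with $\bg$ in place of $\bar g_{n-1}$'' is not valid: the orthogonality \eqref{orthogonal} at level $n-1$ holds with the constant $\bar g_{n-1}$ that actually enters the definition of $\R^{n-1}$, and substituting $\bg$ simply yields a false identity. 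Hence the bookkeeping you sketch does not eliminate the leftover term, and the inequality \eqref{auxil2} is not established by this argument when $\bg\ne\bar g_{n-1}$. (The paper's one-line proof does not spell this point out either.)
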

\begin{proof}
The proof is similar to the proof of Lemma~\ref{auxiliary}.
\end{proof}

To estimate a posteriori the errors $\|(\R^n-\II)V_n\|$ and $\|(\R^n-\II)V_n-(\R^{n-1}-\II)V_{n-1}\|,$ we use residual-type error estimators. To this end, for a given $V_n\in\V^n,\,0\le n\le N,$ we define the following $L^2-$elliptic estimator:
\begin{equation}
\label{EE1}
\begin{aligned}
\h(V_n):=\bigg\{\sum_{K\in\T_n}\Big(\|h_K^2(\D-\D^n)V_n\|^2_{L^2(K)}+\|h_K^{\frac  32}J[\nabla V_n]\|^2_{L^2(\p K)}\Big)\bigg\}^{\frac 12}.
\end{aligned}
\end{equation}
In case $d=1,$ the term with the discontinuities in \eqref{EE1} vanishes. For $V_n\in\V^n$ and $V_{n-1}\in\V^{n-1},\,1\le n\le N,$ we also define
\begin{equation}
\label{EE2}
\begin{aligned}
\het(V_n,V_{n-1}):=\bigg\{\sum_{K\in\widehat\T_n}\Big(\|h_K^2\big[(\D-\D^n)V_n&-(\D-\D^{n-1})V_{n-1}\big]\|^2_{L^2(K)}
\\&+\|h_{K}^{\frac 32}J[\nabla V_n-\nabla V_{n-1}]\|^2_{L^2(\check{\varSigma}_K^n)}\Big)\bigg\}^{\frac 12}.
\end{aligned}
\end{equation}
In view of the definition of $\h$ and of \eqref{auxil1}, the Lemma below is standard. Its proof is based on duality arguments and the elliptic regularity estimate for the Laplace operator. For details on the proof we refer, for example, to \cite{MN1, LM}.
%
\begin{lemma}
For all $V_n\in\V_n,\,0\le n\le N,$ it holds
\begin{equation}
\label{ellipticerror1}
\|(\R^n-\II)V_n\|\le C\h(V_n),
\end{equation}
where the constant $C$ depends only on the domain $\Om$ and the shape regularity of the family of triangulations.\qed
\end{lemma}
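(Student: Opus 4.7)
The plan is to start from the output of Lemma~\ref{auxiliary} and convert the right-hand side into the residual-type quantities appearing in $\h(V_n)$ by elementwise integration by parts, then close the argument by a duality/regularity estimate on $z$.

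First I would take the bound
\[
\|(\R^n-\II)V_n\|^2\le\big|\langle-\D^nV_n,z-\I_nz\rangle-\big\langle\nabla V_n,\nabla(z-\I_n z)\big\rangle\big|
\]
provided by Lemma~\ref{auxiliary} and rewrite the gradient pairing by integrating by parts on each $K\in\T_n$. Since $V_n$ is piecewise polynomial, the strong Laplacian $\D V_n$ is well defined on each $K$, and the boundary contributions from neighboring elements combine into the jump $J[\nabla V_n]$ on each interior side; the boundary edges contribute nothing because $z-\I_n z\in H_0^1(\Om)$. This produces the identity
\[
\big\langle\nabla V_n,\nabla(z-\I_n z)\big\rangle=\sum_{K\in\T_n}\int_K(-\D V_n)(z-\I_n z)+\sum_{e\in\varSigma_n}\int_e J[\nabla V_n](z-\I_n z).
\]
Subtracting this from $\langle-\D^n V_n,z-\I_n z\rangle$, the two global pieces combine into an elementwise residual $(\D-\D^n)V_n$ plus a jump residual.

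Next I would estimate the two resulting sums term-by-term using Cauchy--Schwarz together with the standard Cl\'ement-type interpolation estimates
\[
\|z-\I_nz\|_{L^2(K)}\le C\,h_K^{2}\|z\|_{H^2(\omega_K)},\qquad \|z-\I_nz\|_{L^2(\p K)}\le C\,h_K^{3/2}\|z\|_{H^2(\omega_K)},
\]
where $\omega_K$ denotes the usual Cl\'ement patch. Summing over $K$ and $e$, using finite overlap of the patches from shape regularity, yields
\[
\big|\langle-\D^nV_n,z-\I_nz\rangle-\big\langle\nabla V_n,\nabla(z-\I_n z)\big\rangle\big|\le C\,\h(V_n)\,\|z\|_{H^2(\Om)}.
\]
Finally, since $z$ solves the Poisson problem \eqref{dualprob1} with right-hand side $(\R^n-\II)V_n\in L^2(\Om)$ on a convex polygonal domain, elliptic regularity furnishes $\|z\|_{H^2(\Om)}\le C\|(\R^n-\II)V_n\|$, with $C$ depending only on $\Om$. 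Inserting this and dividing by $\|(\R^n-\II)V_n\|$ gives the asserted bound.

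The main obstacle, as usual in residual-type a posteriori analysis, is making sure the jump contributions are consolidated correctly after the element-by-element integration by parts so that the resulting weights $h_K^{3/2}$ match exactly those in the definition of $\h$ in \eqref{EE1}; this requires shape regularity so that $h_K$ on both sides of an interior face are comparable. Beyond that, the argument rests critically on the convexity of $\Om$ (invoked in the $H^2$ elliptic regularity step), which is precisely the standing assumption made in the paper.
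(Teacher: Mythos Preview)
Your proposal is correct and follows precisely the standard route the paper alludes to: the paper does not spell out a proof but states that, in view of \eqref{auxil1} and the definition of $\h$, the result follows from duality arguments and the elliptic regularity estimate for the Laplace operator, with details deferred to \cite{MN1, LM}. Your elementwise integration by parts, Cl\'ement interpolation bounds, and the $H^2$ regularity step for the Poisson dual problem on the convex domain are exactly these ingredients, so there is nothing to add.
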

%

Similarly, by \eqref{auxil2}  the estimate \eqref{ellipticerror2} in the next lemma holds. For a detailed proof,  we refer to \cite{LM, BKM}.
\begin{lemma}
For $V_n\in\V^n$ and $V_{n-1}\in\V^{n-1},\,1\le n\le N,$ we have
\begin{equation}
\label{ellipticerror2}
\|(\R^n-\II)V_n-(\R^{n-1}-\II)V_{n-1}\|\le\widehat{C}\het(V_n,V_{n-1}),
\end{equation}
where the constant $\widehat{C}$ depends only on the domain $\Om$, the shape regularity of the triangulations, and the number of bisections necessary to pass from $\T_{n-1}$ to $\T_n$.\qed
\end{lemma}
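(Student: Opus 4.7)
The plan is to mimic the duality argument used for \eqref{ellipticerror1}, now performed on the coarsening $\widehat\T_n$ and with its associated Cl\'ement-type interpolant $\widehat \I_n$. Starting from \eqref{auxil2}, the right-hand side is already expressed as a linear functional of $\hat z - \widehat \I_n \hat z$; what remains is (i) to convert the gradient contributions into the volume and jump residuals appearing in $\het$ and (ii) to bound the resulting sum by $\widehat C\,\het(V_n,V_{n-1})\,\|D^2\hat z\|_{L^2(\Om)}$, after which $H^2$ elliptic regularity for the dual problem defining $\hat z$ will close the argument.

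For step (i), I would integrate by parts the two gradient terms in \eqref{auxil2} elementwise on the meshes on which $V_n$ and $V_{n-1}$ are genuinely polynomial, namely on $\T_n$ and $\T_{n-1}$ respectively. The interior face contributions assemble into jump sums of $J[\nabla V_n]$ over $\varSigma_n$ and of $J[\nabla V_{n-1}]$ over $\varSigma_{n-1}$. Because $V_n$ is smooth across every side in $\varSigma_{n-1}\setminus\varSigma_n$ (and symmetrically for $V_{n-1}$), both sums may be extended to $\check\varSigma_n=\varSigma_n\cup\varSigma_{n-1}$ at no cost and then combined into a single sum of $J[\nabla(V_n-V_{n-1})]$ over $\check\varSigma_n$. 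The volume contributions are regrouped over each $\widehat K\in\widehat\T_n$ (using that $\T_n$ and $\T_{n-1}$ both refine $\widehat\T_n$), and they combine with the discrete-Laplacian terms of \eqref{auxil2} into $\langle(\D-\D^n)V_n-(\D-\D^{n-1})V_{n-1},\hat z-\widehat\I_n\hat z\rangle_{\widehat K}$. This matches exactly the volume piece of $\het$.

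For step (ii), I apply weighted Cauchy--Schwarz elementwise so as to expose the $h_K^2$ and $h_K^{3/2}$ weights built into $\het$. Standard Cl\'ement approximation on $\widehat\V^n$ gives $\|h_K^{-2}(\hat z-\widehat\I_n\hat z)\|_{L^2(\widehat K)}\le C\|D^2\hat z\|_{L^2(\omega_{\widehat K})}$, with $\omega_{\widehat K}$ the usual finite-overlap patch. For the jump weight $h_K^{-3/2}$ on a side $e\in\check\varSigma^n_K$, which is in general interior to $\widehat K$ rather than a side of $\widehat\T_n$ itself, I combine a scaled trace inequality on the finer element $K'\in\T_n$ (or $\T_{n-1}$) containing $e$ with the Cl\'ement $L^2$ and $H^1$ estimates on $\widehat K$; a short scaling produces a factor of order $(h_{\widehat K}/h_{K'})^{1/2}$, which is bounded by a constant depending solely on the number of bisections needed to pass from $\widehat\T_n$ to $\T_n$ or $\T_{n-1}$, and therefore from $\T_{n-1}$ to $\T_n$. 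Summing over $\widehat\T_n$, using the finite overlap of the patches, and invoking $H^2$ elliptic regularity on the convex polygonal domain $\Om$ for the Poisson problem defining $\hat z$ yields
$$\|(\R^n-\II)V_n-(\R^{n-1}-\II)V_{n-1}\|^2 \le \widehat C\,\het(V_n,V_{n-1})\,\|(\R^n-\II)V_n-(\R^{n-1}-\II)V_{n-1}\|,$$
and cancelling one factor of the error norm produces \eqref{ellipticerror2}.

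The main subtlety, absent in the single-mesh estimate \eqref{ellipticerror1}, is precisely the treatment of the jump contributions on sides interior to elements of $\widehat\T_n$: the Cl\'ement interpolant $\widehat\I_n$ only sees the coarser mesh $\widehat\T_n$, whereas the jumps live on the finer meshes $\T_n$ and $\T_{n-1}$. Bridging this gap by means of the scaled trace inequality on the finer elements is exactly what introduces the dependence of $\widehat C$ on the bisection depth connecting $\T_{n-1}$ to $\T_n$, as announced in the statement.
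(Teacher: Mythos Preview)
Your proposal is correct and follows the standard duality argument the paper invokes: the paper does not give its own proof but states that \eqref{ellipticerror2} follows from \eqref{auxil2} ``similarly'' and refers to \cite{LM,BKM} for the details, which are precisely the integration by parts on the fine meshes, the combination of jumps on $\check\varSigma_n$, the Cl\'ement bounds on $\widehat\V^n$, and $H^2$ elliptic regularity that you outline. Your identification of the bisection-depth dependence as arising from the mismatch between the Cl\'ement interpolant on $\widehat\T_n$ and the jump sides on the finer meshes is exactly the point emphasized in \cite{LM,BKM}.
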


%

\subsection{Space and time-space reconstructions}
%
We first define the continuous, piecewise linear interpolant $U:[0,T]\to H_0^1(\Om)$ between the nodal values $U^{n-1}$ and $U^n$, i.e.,
\begin{equation}
\label{LIN}
U(t):=\ell_0^n(t)U^{n-1}+\ell_1^n(t)U^n,\quad t\in I_n,
\end{equation} 
with
$\displaystyle\ell_0^n(t):=\frac{t^n-t}{k_n}$  and $ \displaystyle\ell_1^n(t):=\frac{t-t_{n-1}}{k_n},\, t\in I_n.$
The space reconstruction of $U$, that was used in \cite{LM} to obtain of optimal order a posteriori error estimates for the backward Euler-Galerkin fully discrete scheme is given via
$$\omega(t):=\ell_0^n(t)\R^{n-1}U^{n-1}+\ell_1^n(t)\R^nU^n,\quad t\in I_n.$$
However, as the authors note in \cite{AMN1,LPP} to obtain optimal order in time a posteriori error estimates for the Crank-Nicolson method, a reconstruction in time is also needed. Here,  with the aid of the new elliptic reconstruction \eqref{ellipticrec}, we propose a  two-point time-space reconstruction for  linear Schr\"odinger equations and the method \eqref{CNGS}.
%
\begin{definition}[time-space reconstruction]
For $1\le n\le N,$ we define the two-point time-space reconstruction $\widehat{U}: I_n\to H_0^1(\Om)$ of the CNFE scheme \eqref{CNGS} as
\begin{equation}
\label{STR1}
\begin{aligned}
\widehat{U}(t):=\R^{n-1}U^{n-1}+\frac{t-t_{n-1}}{k_n}\big(\R^n&\varPi^nU^{n-1}-\R^{n-1}U^{n-1}\big)-\iu\alpha\int_{t_{n-1}}^t\R^n\Theta(s)\,ds
\\&-\iu\int_{t_{n-1}}^t\R^n\Pro^n G_U(s)\,ds+\int_{t_{n-1}}^t\R^n\Pro^nF(s)\,ds,\quad t\in I_n,
\end{aligned}
\end{equation}
where 
\begin{equation}
\label{data1}
G_U(t):=g(t_{n-\frac 12})U^{n-\frac 12}+\frac{2}{k_n}(t-t_{n-\frac 12})\Big[g(t_{n-\frac 12})U^{n-\frac 12}-g(t_{n-1})U^{n-1}\Big]
\end{equation}
and
\begin{equation}
\label{data2}
F(t):=f(t_{n-\frac 12})+\frac{2}{k_n}(t-t_{n-\frac 12})\Big[f(t_{n-\frac 12})-f(t_{n-1})\Big],
\end{equation}
denote the linear interpolants of $gU$ and $f$, respectively, at the nodes $t_{n-1}$ and $t_{n-\frac 12},$ and 
\begin{equation}
\label{theta}
\Theta(t):=\ell_0^n(t)\varPi^n(-\D^{n-1})U^{n-1}+\ell_1^n(t)(-\D^n)U^n.
\end{equation}
\end{definition}

In order to write compactly method \eqref{CNGS} and the reconstruction $\widehat U$, we introduce  the notation
\begin{equation}
\label{compnot}
W(t):=\Big(\iu\alpha\Theta+\iu\Pro^nG_U-\Pro^nF\Big)(t),\quad t\in I_n.
\end{equation}
With this notation, the reconstruction $\widehat U$ is rewritten as 
\begin{equation}
\label{STR2}
\widehat U(t)=\R^{n-1}U^{n-1}+\frac{t-t_{n-1}}{k_n}\big(\R^n\varPi^nU^{n-1}-\R^{n-1}U^{n-1}\big)-\int_{t_{n-1}}^t\R^nW(s)\,ds,\quad t\in I_n,
\end{equation}
and method \eqref{CNGS} as
\begin{equation}
\label{CNGS2}
\frac{U^n-\varPi^n U^{n-1}}{k_n}+W(t_{n-\frac{1}{2}})=0,\quad 1\le n \le N.
\end{equation}
Note that in each $[t_{n-1},t_n],$ $W$ is a linear polynomial between the values $\big(t_{n-1}, W(t_{n-1})\big)$ and $\big(t_{n-\frac{1}{2}}, W(t_{n-\frac{1}{2}})\big).$ Thus, it is straightforward to see that
\begin{equation}
\label{wdiff}
W(t)-W(t_{n-\frac 12})=(t-t_{n-\frac 12})\p_t W(t),\quad t\in I_n.
\end{equation}
\begin{proposition}
For $1\le n\le N,$ there holds
$$\widehat U(t_{n-1}^+)=\R^{n-1}U^{n-1} \quad \text{ and } \quad \widehat U(t_n)=\R^n U^n.$$
In particular, $\widehat U$ is continuous in time. Furthermore, it satisfies
\begin{equation}
\label{erreq1}
\begin{aligned}
\p_t\widehat U+\iu\alpha\R^n\Theta+\iu\R^n\Pro^nG_U=\R^n\Pro^n F+\frac{\R^n\varPi^n U^{n-1}-\R^{n-1}U^{n-1}}{k_n}\ \text{ in }\ I_n.
\end{aligned}
\end{equation}
\end{proposition}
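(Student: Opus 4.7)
The plan is to read off the boundary values of $\widehat U$ directly from the compact form \eqref{STR2}, then obtain continuity by matching one-sided limits, and finally recover \eqref{erreq1} by differentiating \eqref{STR2} in $t$. Three ingredients do all the work: (i) the integral $\int_{t_{n-1}}^{t}$ and the affine prefactor $(t-t_{n-1})/k_n$ both vanish at $t=t_{n-1}$; (ii) on $I_n$ the function $W$ is a polynomial of degree at most one in $t$ (this is exactly the content of \eqref{wdiff}), so its integral over $I_n$ equals $k_n$ times its midpoint value; (iii) the midpoint value is pinned down by the fully discrete method \eqref{CNGS2}.

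First, plugging $t=t_{n-1}$ into \eqref{STR2} gives $\widehat U(t_{n-1}^+)=\R^{n-1}U^{n-1}$ immediately, since the second and third terms vanish.

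Next, I set $t=t_n$ in \eqref{STR2}. The affine factor becomes $1$, contributing $\R^n\varPi^n U^{n-1}-\R^{n-1}U^{n-1}$, and the integral reduces by linearity of $W$ on $I_n$ to $k_n\,\R^n W(t_{n-\frac12})$. Applying \eqref{CNGS2} in the form $k_n W(t_{n-\frac12})=\varPi^n U^{n-1}-U^n$ and using the linearity of $\R^n$ collapses everything to $\R^n U^n$. Continuity at each interior node $t_{n-1}$ then follows by comparing the left limit from $I_{n-1}$ (which equals $\R^{n-1}U^{n-1}$ by the previous paragraph applied at level $n-1$) with the right limit from $I_n$ (which also equals $\R^{n-1}U^{n-1}$ by the first step). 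Within each open interval $I_n$, $\widehat U$ is smooth in $t$, so the two-sided continuity follows.

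Finally, differentiating \eqref{STR2} in $t$ and using the Fundamental Theorem of Calculus on the integral yields
\[
\p_t\widehat U(t)=\frac{\R^n\varPi^n U^{n-1}-\R^{n-1}U^{n-1}}{k_n}-\R^n W(t),\qquad t\in I_n.
\]
Substituting the definition \eqref{compnot} of $W$ and rearranging gives \eqref{erreq1} exactly. No step is really hard; the only place that requires a moment of care is the second step, where one must note that because $W$ is affine on $I_n$ the Crank--Nicolson midpoint identity \eqref{CNGS2} converts the integral $\int_{I_n}W$ into an expression involving $\varPi^n U^{n-1}$ and $U^n$, so that the $\R^n\varPi^nU^{n-1}$ contribution cancels and the telescoping produces the desired $\R^n U^n$.
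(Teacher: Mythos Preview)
Your proof is correct and follows essentially the same approach as the paper: evaluate the compact form \eqref{STR2} at the endpoints, use that $W$ is affine on $I_n$ so the midpoint rule is exact and \eqref{CNGS2} applies, and differentiate \eqref{STR2} (equivalently \eqref{STR1}) to obtain \eqref{erreq1}. Your treatment of continuity at the interior nodes is slightly more explicit than the paper's, but the argument is the same.
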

\begin{proof}
That $\widehat U(t_{n-1}^+)=\R^{n-1}U^{n-1}$ is obvious from the definition of $\widehat U.$ Moreover,
$$\widehat U(t_n)=\R^n\varPi^n U^{n-1}-\R^n\int_{I_n}W(t)\,dt.$$
Since $W$ is a linear polynomial in time in $I_n$, we have that $\int_{I_n}W(t)\,dt=k_nW(t_{n-\frac 12})$ and that $\widehat U(t_n)=\R^nU^n$  follows invoking \eqref{CNGS2}. Finally, \eqref{erreq1} is an immediate consequence of differentiation in time of \eqref{STR1}.  
\end{proof}
We conclude the section by computing the difference $\widehat U-\omega.$ For this, we introduce, for $1\le n\le N,$ the notation
\begin{equation}
\label{disctd}
\dtd:=\frac 2{k_n}\Big[W(t_{n-\frac{1}{2}})-W(t_{n-1})\Big].
\end{equation}
\begin{lemma} [the difference $\hat U-\omega$]
The difference $\widehat U-\omega$ satisfies
\begin{equation}
\label{diffrec1}
(\widehat U-\omega)(t)=\frac 12(t_n-t)(t-t_{n-1})\R^n\dtd,\quad t\in I_n.
\end{equation}
\end{lemma}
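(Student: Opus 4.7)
The plan is to verify the identity by checking that both sides agree at $t=t_{n-1}$ and that their time derivatives coincide throughout $I_n$. Since the right-hand side is a quadratic polynomial in $t$ vanishing at both endpoints of $I_n$, and the left-hand side also vanishes at $t=t_{n-1}$ (use $\widehat U(t_{n-1}^+)=\R^{n-1}U^{n-1}$ from the preceding proposition and the definition of $\omega$), it suffices to match first derivatives.

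First I would compute $\p_t\widehat U$ from the compact form \eqref{STR2}:
\begin{equation*}
\p_t\widehat U(t)=\frac{\R^n\varPi^nU^{n-1}-\R^{n-1}U^{n-1}}{k_n}-\R^nW(t),\qquad t\in I_n,
\end{equation*}
and observe that $\p_t\omega(t)=\dfrac{\R^nU^n-\R^{n-1}U^{n-1}}{k_n}$ is constant on $I_n$. Subtracting gives
\begin{equation*}
\p_t(\widehat U-\omega)(t)=\frac{\R^n\varPi^nU^{n-1}-\R^nU^n}{k_n}-\R^nW(t).
\end{equation*}
Next I would invoke the compact form of the scheme \eqref{CNGS2} to replace $(\R^n\varPi^nU^{n-1}-\R^nU^n)/k_n$ by $\R^nW(t_{n-\frac12})$, obtaining
\begin{equation*}
\p_t(\widehat U-\omega)(t)=\R^n\bigl[W(t_{n-\frac12})-W(t)\bigr].
\end{equation*}

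The last step is to rewrite this using the linearity of $W$ on $I_n$: by \eqref{wdiff} we have $W(t)-W(t_{n-\frac12})=(t-t_{n-\frac12})\p_tW$, and since $\p_tW$ is constant on $I_n$, definition \eqref{disctd} yields $\p_tW=\dtd$. Hence
\begin{equation*}
\p_t(\widehat U-\omega)(t)=-(t-t_{n-\frac12})\,\R^n\dtd.
\end{equation*}
A direct differentiation of the proposed right-hand side gives
\begin{equation*}
\p_t\Bigl[\tfrac12(t_n-t)(t-t_{n-1})\R^n\dtd\Bigr]=\tfrac12(t_n+t_{n-1}-2t)\R^n\dtd=-(t-t_{n-\frac12})\R^n\dtd,
\end{equation*}
which matches. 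Combined with the common initial value $0$ at $t=t_{n-1}$, this proves \eqref{diffrec1}.

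I do not expect a genuine obstacle here: the whole argument is mechanical once one recognizes that $W$ is linear on $I_n$ (so $\p_tW$ is literally the divided difference $\dtd$) and that the scheme \eqref{CNGS2} is precisely what makes $\widehat U(t_n)=\R^nU^n$. The only thing one has to be careful about is the sign and the factor $1/2$ when expressing $\p_t W$ through the half-step divided difference in \eqref{disctd}, which accounts for $t_{n-\frac12}-t_{n-1}=k_n/2$.
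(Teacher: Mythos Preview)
Your argument is correct and follows essentially the same route as the paper: both compute $\p_t(\widehat U-\omega)=-\R^n\bigl(W(t)-W(t_{n-\frac12})\bigr)$ via the scheme \eqref{CNGS2}, then use the linearity of $W$ (i.e., \eqref{wdiff} and $\p_tW=\dtd$). The only cosmetic difference is that the paper integrates this derivative from $t_{n-1}$ using $\int_{t_{n-1}}^t(s-t_{n-\frac12})\,ds=\tfrac12(t-t_{n-1})(t-t_n)$, whereas you verify the identity by matching the derivative of the proposed right-hand side together with the common initial value at $t_{n-1}$.
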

\begin{proof}
Using the definitions of $\widehat U$ and $\omega$ and the method in the form \eqref{CNGS2} we obtain
$$\p_t(\widehat U-\omega)(t)=-\R^n\Big(W(t)-W(t_{n-\frac 12})\Big).$$
Thus, using \eqref{wdiff} and the fact that $\int_{t_{n-1}}^t(s-t_{n-\frac 12})\,ds=\frac{1}{2}(t-t_{n-1})(t-t_n),$ we obtain
\begin{equation}
\label{diffrec2}
(\widehat U-\omega)(t)=\frac 12(t_n-t)(t-t_{n-1})\R^n\p_t W(t),\quad t\in I_n.
\end{equation}
Equality \eqref{diffrec1} follows now from \eqref{diffrec2}, by noting that $\p_t W(t)=\dtd,\,t\in I_n$; cf.\ \eqref{disctd} and the definition \eqref{compnot} of $W(t)$.
\end{proof}
\section{A Posteriori Error Estimates in the $L^\infty(L^2)-$norm}\label{apost}
\subsection{Main Ideas}
In this section, we establish a posteriori error estimates in the $L^\infty(L^2)-$norm for problem \eqref{LS1}, using the tools  developed in the previous section. To this end, we denote by $e:=u-U$ the error, where recall that $U$ is the piecewise linear interpolant between the nodal values $U^{n-1}$ and $U^n$; cf.\ \eqref{LIN}. To achieve proving optimal order a posteriori error estimates in the $L^\infty(L^2)-$norm for \eqref{LS1} we split the error as
$$e:=\hat\rho+\sigma+\epsilon,$$
with $\hat\rho:=u-\widehat U$, $\sigma:=\widehat U-\omega$ and $\epsilon:=\omega-U.$ 
 We refer to $\hat\rho$ as the main error, to $\sigma$ as the time-reconstruction error and to $\epsilon$ as the elliptic-reconstruction error. The term $\sigma$ measures the error due to the reconstruction in time. This term is of optimal order in time, cf.\ \eqref{diffrec1}, but not yet an a posteriori quantity. It can be estimated a posteriori using the residual-type error estimators. The residual estimators will also be used for the direct estimation of the elliptic-reconstruction error.

Finally, as we shall see, the main error $\hat\rho$ satisfies a perturbation of the original PDE and it will be bounded by the perturbed terms using energy techniques. The perturbed terms are either a posteriori quantities of optimal order, or can be estimated a posteriori by estimators of optimal order. These terms will include quantities that measure the time and space errors, the effect of mesh changes and the variation of the data $f$ and $g$.
We now proceed with the estimation of $\sigma$ and $\epsilon$ in Propositions \ref{timerecerror} and \ref{ellipterror}, respectively.
\begin{proposition}[estimation of the time-reconstruction error]\label{timerecerror}
For $1\le m\le N$, the following estimate is valid for the time reconstruction error $\sigma=\widehat U-\omega$:
\begin{equation}
\label{timerecer}
\max_{0\le t\le t_m}\|\sigma(t)\|\le\E_m^{\TT,0}\quad \text{ with }\quad \E_m^{\TT,0}:=\max_{1\le n\le m}\frac{k_n^2} 8\Big[\|\dtd \|+C\h(\dtd)\Big].
\end{equation}
\end{proposition}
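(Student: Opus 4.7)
The plan is to combine the identity for $\widehat U - \omega$ established in \eqref{diffrec1} with the $L^2$-elliptic estimator bound \eqref{ellipticerror1}, and then exploit the fact that the parabola $(t_n-t)(t-t_{n-1})$ is maximized at the midpoint of $I_n$.

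First, I would fix an interval $I_n$ and apply \eqref{diffrec1} directly to write
\begin{equation*}
\|\sigma(t)\| = \tfrac{1}{2}(t_n-t)(t-t_{n-1})\,\|\R^n \dtd\|,\quad t\in I_n.
\end{equation*}
The elementary observation that $(t_n-t)(t-t_{n-1}) \le k_n^2/4$ on $I_n$, attained at $t=t_{n-\frac{1}{2}}$, immediately yields $\|\sigma(t)\| \le \tfrac{k_n^2}{8}\|\R^n\dtd\|$.

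Next I would split $\R^n\dtd = \dtd + (\R^n-\II)\dtd$ via the triangle inequality to obtain
\begin{equation*}
\|\R^n\dtd\| \le \|\dtd\| + \|(\R^n-\II)\dtd\|.
\end{equation*}
The main point is that $\dtd \in \V^n$: from the definitions \eqref{compnot}, \eqref{theta}, \eqref{data1}, \eqref{data2}, the function $W$ takes values in $\V^n$ on $I_n$ (all terms are either in $\V^n$ by construction or are images of $\Pro^n$), so the difference $\tfrac{2}{k_n}[W(t_{n-\frac12}) - W(t_{n-1})]$ lies in $\V^n$ as well. This allows me to apply the residual-type bound \eqref{ellipticerror1} with $V_n = \dtd$, giving
\begin{equation*}
\|(\R^n-\II)\dtd\| \le C\h(\dtd).
\end{equation*}

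Combining the two estimates on each $I_n$ and taking the maximum over $1\le n\le m$ yields the desired bound; continuity of $\sigma$ and the fact that $\sigma(0)=0$ (since $\sigma$ vanishes at the nodes $t_{n-1}, t_n$ by the product structure in \eqref{diffrec1}) ensure the supremum over $[0,t_m]$ agrees with the maximum over the subintervals. There is no real obstacle here — the only subtlety worth a sentence in the write-up is checking that $\dtd \in \V^n$, so that Lemma on $\|(\R^n-\II)V_n\|$ applies verbatim.
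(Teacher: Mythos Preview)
Your proposal is correct and follows exactly the paper's approach: the paper's proof consists of a single sentence invoking the splitting $\R^n\dtd = \dtd + (\R^n-\II)\dtd$ together with \eqref{ellipticerror1} and \eqref{diffrec1}, which is precisely what you spell out in detail. Your additional remarks (that $\dtd\in\V^n$ so that \eqref{ellipticerror1} applies, and that $\sigma$ vanishes at the nodes) are implicit in the paper and worth making explicit.
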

\begin{proof}
We write $\R^n\dtd =\dtd +(\R^n-\II)\dtd$  and the desirable result now follows using \eqref{ellipticerror1} and \eqref{diffrec1}.
\end{proof}
\begin{proposition}[estimation of the elliptic error]\label{ellipterror}
For the elliptic error $\epsilon=\omega-U$ we have, for $1\le m\le N$:
\begin{equation}
\label{ellipticerr}
\max_{0\le t\le t_m}\|\epsilon(t)\|\le C\E_m^{\Ss,0}\quad \text{ with }\quad \E_m^{\Ss,0}:=\max_{0\le n\le m}\h(U^n).
\end{equation}
\end{proposition}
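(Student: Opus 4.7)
The plan is to exploit the explicit piecewise linear structure of both $\omega$ and $U$ and reduce the proof to the nodal elliptic reconstruction error estimate \eqref{ellipticerror1}.

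First I would fix $t\in I_n$ for some $1\le n\le m$, and use the definitions
$$\omega(t)=\ell_0^n(t)\R^{n-1}U^{n-1}+\ell_1^n(t)\R^nU^n,\qquad U(t)=\ell_0^n(t)U^{n-1}+\ell_1^n(t)U^n,$$
to write the elliptic-reconstruction error as a convex combination of the nodal errors:
$$\epsilon(t)=\ell_0^n(t)(\R^{n-1}-\II)U^{n-1}+\ell_1^n(t)(\R^n-\II)U^n.$$
Next I would apply the triangle inequality and the facts that $\ell_0^n(t),\ell_1^n(t)\ge 0$ and $\ell_0^n(t)+\ell_1^n(t)=1$ for $t\in I_n$, to bound
$$\|\epsilon(t)\|\le\max\Big(\|(\R^{n-1}-\II)U^{n-1}\|,\|(\R^n-\II)U^n\|\Big).$$

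Then I would invoke Lemma \eqref{ellipticerror1} at the nodes $t_{n-1}$ and $t_n$, which gives $\|(\R^k-\II)U^k\|\le C\,\eta_{\V^k}(U^k)$ for $k=n-1,n$. Taking the maximum first over $t\in I_n$ and then over $1\le n\le m$ yields
$$\max_{0\le t\le t_m}\|\epsilon(t)\|\le C\max_{0\le n\le m}\eta_{\V^n}(U^n)=C\E_m^{\Ss,0},$$
which is the desired estimate. The case $t=0$ is covered by $\epsilon(0)=(\R^0-\II)U^0$ together with the same lemma applied at $n=0$.

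There is really no hard step here: once one recognizes that the elliptic-reconstruction error inherits the piecewise-linear-in-time structure of $\omega$ and $U$ node-by-node, the estimate follows immediately from the residual-type bound \eqref{ellipticerror1}. The only point that requires a small comment is the constant $C$, which, by the statement of Lemma \eqref{ellipticerror1}, depends solely on $\Om$ and the shape regularity of $\{\T_n\}_{n=0}^N$, and thus is uniform in $n$; this is what allows us to pull the maximum outside. No cancellation or energy argument is needed, in contrast to the time-reconstruction error estimated in Proposition~\ref{timerecerror}.
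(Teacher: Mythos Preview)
Your proof is correct and follows essentially the same approach as the paper: write $\epsilon(t)$ as the convex combination $\ell_0^n(t)(\R^{n-1}-\II)U^{n-1}+\ell_1^n(t)(\R^n-\II)U^n$, bound it by the maximum of the two nodal reconstruction errors, and then invoke \eqref{ellipticerror1}. The paper's proof is slightly terser but the argument is identical.
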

\begin{proof}
For $t\in I_n,$ $\epsilon=\ell_0^n(t)(\R^{n-1}-\II)U^{n-1}+\ell_1^n(t)(\R^n-\II)U^n.$ Hence, 
$$\|\epsilon(t)\|\le\max\Big\{\|(\R^{n-1}-\II)U^{n-1}\|,\|(\R^n-\II)U^n\|\Big\},\quad t\in I_n,$$
from where we immediately conclude \eqref{ellipticerr}, in view of \eqref{ellipticerror1}. 
\end{proof}

\subsection{Estimation of the main error}
%
In view of \eqref{erreq1} we see that the reconstruction $\widehat U$ satisfies, for $t\in I_n,$ the equation
\begin{equation}
\label{erreq2}
\begin{aligned}
\langle\p_t\widehat U(t),\phi\rangle+\iu\alpha\langle\nabla\widehat U(t),\nabla\phi\rangle+\iu\langle g(t)\widehat U(t),\phi\rangle=\langle R(t),\phi\rangle,\quad\forall\phi\in H_0^1(\Om),
\end{aligned}
\end{equation}
with
\begin{equation}
\label{residual}
\Rr(t):=-\R^nW(t)+\frac{\R^n\varPi^nU^{n-1}-\R^{n-1}U^{n-1}}{k_n}+\iu\big(-\alpha\D+g(t)\big)(\omega+\sigma)(t),\quad t\in I_n.
\end{equation}
\begin{proposition}[error equation for $\hat\rho$]\label{errormain}
The main error $\hat\rho=u-\widehat U$ satisfies, for $t\in I_n,$ the equation
\begin{equation}
\label{erreq3}
\begin{aligned}
\langle\p_t\hat \rho(t),\phi\rangle+\iu\alpha\langle\nabla\hat \rho(t),\nabla\phi\rangle+\iu\langle g(t)\hat\rho(t),\phi\rangle=\sum_{j=1}^4\langle R_j(t),\phi\rangle,\quad\forall\phi\in H_0^1(\Om),
\end{aligned}
\end{equation}
where the residuals $\Rr_j,\,1\le j\le 4,$ are given by
\begin{equation}
\label{residual1}
\Rr_1(t):=(\R^n-\II)W(t)-\frac{\R^n\varPi^nU^{n-1}}{k_n}+\iu\alpha\ell_0^n(t)(\II-\varPi^n)\D^{n-1}U^{n-1},
\end{equation}
\begin{equation}
\label{residual2}
\Rr_2(t):=\frac\iu 2(t_n-t)(t-t_{n-1})\Big[\big(-\alpha\D^n+g(t)\big)\dtd+\big(g(t)-\bg)(\R^n-\II)\dtd\Big],
\end{equation}
\begin{equation}
\label{residual3}
\Rr_3(t):=\iu\big(g(t)-\bg\big)\Big[\ell_0^n(t)(\II-\R^{n-1})U^{n-1}+\ell_1^n(t)(\II-\R^n)U^n\Big],
\end{equation}
and
\begin{equation}
\label{residual4}
\Rr_4(t):=\iu\big(\Pro^nG_U(t)-(gU)(t)\big)+\big(f(t)-\Pro^n F(t)\big).
\end{equation}
\end{proposition}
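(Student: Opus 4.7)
The plan is to derive the error equation by direct subtraction, then rewrite the residual as a sum of four explicit pieces through a sequence of natural substitutions.

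Since both $u$ and $\widehat U$ satisfy equations of the form $\p_t v+\iu(-\alpha\D+g)v=(\text{RHS})$ in the $H^{-1}$--$H_0^1$ duality---the former with RHS $f$ by \eqref{LS2}, the latter with RHS $\Rr$ by \eqref{erreq2}---subtracting and noting that both left-hand sides share the same Schr\"odinger operator yields, for every $\phi\in H_0^1(\Om)$,
\begin{equation*}
\langle\p_t\hat\rho,\phi\rangle+\iu\alpha\langle\nabla\hat\rho,\nabla\phi\rangle+\iu\langle g\hat\rho,\phi\rangle=\langle f-\Rr,\phi\rangle.
\end{equation*}
The whole content of the proposition is then the algebraic identity $f-\Rr=\sum_{j=1}^{4}\Rr_j$.

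To verify it I would expand $\Rr$ via $\widehat U=\omega+\sigma$ and \eqref{residual}, and process the result with three tools: (i) the elliptic-reconstruction identity
\begin{equation*}
-\alpha\D\R^n V_n=-\alpha\D^n V_n-\bg(\R^n-\II)V_n\quad\text{in }H^{-1}(\Om),
\end{equation*}
obtained directly from \eqref{ellipticrec} by interpreting $\alpha\langle\nabla\R^nV_n,\nabla\phi\rangle$ as the $H^{-1}$--$H_0^1$ pairing of $-\alpha\D\R^nV_n$ with $\phi$; (ii) the explicit form $\sigma(t)=\tfrac12(t_n-t)(t-t_{n-1})\R^n\dtd$ from \eqref{diffrec1}; and (iii) the decomposition $\omega=U+\epsilon$, coupled with the expansion $W=\iu\alpha\Theta+\iu\Pro^nG_U-\Pro^nF$ from \eqref{compnot}.

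With these in hand, the four residuals emerge in a natural order. Splitting $\R^nW=W+(\R^n-\II)W$, the second piece is the first summand of $\Rr_1$, the mesh-change quotient in $\Rr$ is its second summand, and the $\iu\alpha\Theta$ contribution cancels against $-\iu\alpha\D\omega$ (treated via tool (i)) up to exactly $\iu\alpha\ell_0^n(\II-\varPi^n)\D^{n-1}U^{n-1}$, which is the third summand of $\Rr_1$. Applying tool (i) to $\sigma$ yields $\iu(-\alpha\D+g)\sigma=\Rr_2$ essentially term by term. The $\bg$-corrections left over from applying tool (i) to $\omega$ combine with the potential piece $-\iu g\omega$, rewritten through $\omega=U+\epsilon$, to produce $\Rr_3=\iu(g-\bg)(U-\omega)$. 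The remaining data-approximation mismatches $\iu(\Pro^nG_U-gU)+(f-\Pro^nF)$ constitute $\Rr_4$.

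The main obstacle is the parallel bookkeeping. Several cancellations must be orchestrated simultaneously: the $\D$-versus-$\D^n,\D^{n-1}$ mismatch---resolved by the nonstandard term $\varPi^n\D^{n-1}U^{n-1}$ deliberately built into both the scheme \eqref{CNGS} and the auxiliary quantity $\Theta$, which is exactly what makes the $\ell_0^n$ telescoping close---together with the difference between the two elliptic reconstructions $\R^{n-1}$ and $\R^n$ appearing in $\omega$, and the source/potential interpolation errors. Once these moving parts are aligned with the decomposition prescribed by $\Rr_1,\dots,\Rr_4$, the identity $f-\Rr=\sum_j\Rr_j$ follows by direct (if careful) calculation.
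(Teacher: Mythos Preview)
Your proposal is correct and follows essentially the same route as the paper: subtract \eqref{erreq2} from \eqref{LS2} to obtain the error equation with right-hand side $f-\Rr$, then expand $\Rr$ through \eqref{residual}, process $(-\alpha\D+g)\omega$ and $(-\alpha\D+g)\sigma$ via the elliptic-reconstruction identity (your tool~(i), the paper's \eqref{ellipticrec}) and \eqref{diffrec1}, and match the resulting pieces to $\Rr_1,\dots,\Rr_4$. The paper organizes the computation by deriving the two auxiliary identities \eqref{erraux1} and \eqref{erraux2} for $(-\alpha\D+g)\omega$ and $(-\alpha\D+g)\sigma$ separately, whereas you describe a more term-by-term matching, but the content is identical.
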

\begin{proof}
Subtracting \eqref{erreq2} from \eqref{LS2} we obtain, for $t\in I_n,$
\begin{equation}
\label{erreq4}
\begin{aligned}
\langle\p_t\hat \rho(t),\phi\rangle+\iu\alpha\langle\nabla\hat \rho(t),\nabla\phi\rangle+\iu\langle g(t)\hat\rho(t),\phi\rangle=\langle f(t),\phi\rangle-\langle R(t),\phi\rangle,\quad \forall\phi\in H_0^1(\Om).
\end{aligned}
\end{equation}
We further write
$$\big(-\alpha\D+g(t)\big)\omega(t)=\big(-\alpha\D+\bg)\omega(t)+\big(g(t)-\bg)\omega(t),$$
where we recall that $\omega(t)=\ell_0^n(t)\R^{n-1}U^{n-1}+\ell_1^n(t)\R^nU^n,\,t\in I_n.$ Thus \eqref{ellipticrec}, \eqref{theta} yield
\begin{equation}
\label{erraux1}
\begin{aligned}
\big\langle\big(-\alpha\D+g(t)\big)\omega(t),\phi\big\rangle=&\alpha\langle\Theta(t),\phi\rangle+\alpha\ell_0^n(t)\langle(\varPi^n-\II)\D^{n-1}U^{n-1},\phi\rangle+\langle (gU)(t),\phi\rangle\\
                                                                                                          &+\big\langle\big(g(t)-\bg)\big[\ell_0^n(t)(\R^{n-1}-\II)U^{n-1}+\ell_1^n(t)(\R^n-\II)U^n\big],\phi\big\rangle.
\end{aligned}
\end{equation}
Similarly, in view of \eqref{diffrec1}, we obtain
\begin{equation}
\label{erraux2}
\begin{aligned}
\big\langle(-\alpha\D+g(t)\big)\sigma(t),\phi\rangle=&\frac 12 (t_n-t)(t-t_{n-1})\times\\
                                                                                           &\big\langle\big(-\alpha\D^n+g(t)\big)\dtd+\big(g(t)-\bg)(\R^n-\II)\dtd,\phi\big\rangle.
\end{aligned}
\end{equation}
Combining \eqref{erreq4}, \eqref{residual} with \eqref{erraux1}, \eqref{erraux2} and using  \eqref{compnot} we arrive at \eqref{erreq3}.
\end{proof}
Next, we prove the following auxiliary lemma.
\begin{lemma}\label{resrew}
The residual $\Rr_1$ in \eqref{residual1} can be rewritten as 
\begin{equation}
\label{residual11}
\begin{aligned}
\Rr_1(t)=(t-t_{n-\frac 12})(\R^n-\II)\dtd&-\frac{(\R^n-\II)U^n-(\R^{n-1}-\II)U^{n-1}}{k_n}
\\&+(\II-\varPi^n)\big(\iu\alpha\ell_0^n(t)\D^{n-1}U^{n-1}+\frac{U^{n-1}}{k_n}\big),\quad t\in I_n.
\end{aligned}
\end{equation}
\end{lemma}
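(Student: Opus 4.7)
The plan is to convert the definition \eqref{residual1} of $\Rr_1$ into the form \eqref{residual11} using three ingredients: (i) that $W$ is affine in $t$ on $I_n$, so its derivative equals the constant $\dtd$; (ii) the compact form \eqref{CNGS2} of the scheme, which gives $W(t_{n-\frac12})=-(U^n-\varPi^nU^{n-1})/k_n$; and (iii) a careful regrouping of the resulting terms.

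First I would use \eqref{wdiff} together with the observation that $\p_tW(t)=\dtd$ (by linearity of $W$ in time on $I_n$, cf.\ \eqref{disctd}, \eqref{compnot}) to split
\begin{equation*}
(\R^n-\II)W(t)=(t-t_{n-\frac12})(\R^n-\II)\dtd+(\R^n-\II)W(t_{n-\frac12}),\qquad t\in I_n.
\end{equation*}
Substituting $W(t_{n-\frac12})=-(U^n-\varPi^nU^{n-1})/k_n$ from \eqref{CNGS2} then rewrites the first term of \eqref{residual1} as
\begin{equation*}
(\R^n-\II)W(t)=(t-t_{n-\frac12})(\R^n-\II)\dtd-\frac{(\R^n-\II)U^n}{k_n}+\frac{(\R^n-\II)\varPi^nU^{n-1}}{k_n}.
\end{equation*}

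Next I would plug this back into \eqref{residual1} and regroup. The $\R^n\varPi^nU^{n-1}/k_n$ terms combine to leave a bare $-\varPi^nU^{n-1}/k_n$; then adding and subtracting $U^{n-1}/k_n$ and $\R^{n-1}U^{n-1}/k_n$ assembles, on one hand, the telescoping elliptic-reconstruction jump $-\big[(\R^n-\II)U^n-(\R^{n-1}-\II)U^{n-1}\big]/k_n$ and, on the other hand, a leftover $(\II-\varPi^n)U^{n-1}/k_n$. This leftover pairs with the preexisting $\iu\alpha\ell_0^n(t)(\II-\varPi^n)\D^{n-1}U^{n-1}$ in \eqref{residual1} to yield the factored form $(\II-\varPi^n)\bigl(\iu\alpha\ell_0^n(t)\D^{n-1}U^{n-1}+U^{n-1}/k_n\bigr)$, which is precisely the last group in \eqref{residual11}.

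The argument is pure bookkeeping, so there is no genuine obstacle; the only delicate step is tracking the signs when splitting $(\R^n-\II)\varPi^nU^{n-1}-\R^n\varPi^nU^{n-1}=-\varPi^nU^{n-1}$ and then writing $-\varPi^nU^{n-1}=-U^{n-1}+(\II-\varPi^n)U^{n-1}$, so that both the telescoping jump and the projection term emerge cleanly. The value of the rewriting is conceptual: it exposes $\Rr_1$ as the sum of an $O(k_n)$ time-oscillation contribution in $\dtd$, a time-step jump of the elliptic-reconstruction error, and a genuine mesh-change/projection error carried by $\II-\varPi^n$, each of which can subsequently be controlled a posteriori by the estimators $\h$, $\het$ and the data oscillation.
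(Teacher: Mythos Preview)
Your approach is correct and essentially identical to the paper's: both proofs split $(\R^n-\II)W(t)$ via \eqref{wdiff} with $\p_tW=\dtd$, substitute $W(t_{n-\frac12})=-(U^n-\varPi^nU^{n-1})/k_n$ from \eqref{CNGS2}, and then regroup to expose the telescoping elliptic-reconstruction jump and the $(\II-\varPi^n)$ mesh-change term. The paper simply compresses your regrouping into the single displayed identity $(\R^n-\II)W(t_{n-\frac12})-\frac{\R^n\varPi^nU^{n-1}-\R^{n-1}U^{n-1}}{k_n}=\frac{(\II-\R^n)U^n-(\II-\R^{n-1})U^{n-1}}{k_n}+(\II-\varPi^n)\frac{U^{n-1}}{k_n}$, which is exactly what your add-and-subtract steps produce.
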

\begin{proof}
We just note, using the method in the form \eqref{CNGS2}, that
\begin{equation*}
\begin{aligned}
(\R^n-\II)W(t_{n-\frac 12})&-\frac{\R^n\varPi^nU^{n-1}-\R^{n-1}U^{n-1}}{k_n}=\frac{(\II-\R^n)U^n-(\II-\R^{n-1})U^{n-1}}{k_n}+
(\II-\varPi^n)\frac{U^{n-1}}{k_n}.
\end{aligned}
\end{equation*}
The result follows in light of \eqref{wdiff}, because $\p_tW(t)=\dtd$ for $t\in I_n.$
\end{proof}
Proposition~\ref{errormain} and Lemma~\ref{resrew} together with energy methods, lead to the following a posteriori estimation in the $L^\infty(L^2)-$norm for the main error $\hat\rho.$
\begin{proposition}[estimation of the main error]\label{mainerror}
Let $p_n:=\sup_{\Om\times I_n}|g(x,t)-\bg|,\,1\le n\le N.$ Then, for the main error $\hat\rho=u-\widehat U$ and $1\le m\le N$, it holds that
\begin{equation}
\label{estmainerr}
\max_{0\le t\le t_m}\|\hat\rho(t)\|\le\|u_0-\R^0U^0\|+\E_m^{\TT,1}+C(\E_m^{\Ss,1}+\E_m^{\Ss,2})+\widehat C\E_m^{\Ss,3}+\E_m^{\mathrm{C}}+\E_m^{\mathrm{D}},
\end{equation}
where the time estimator $\E_m^{\TT,1}$ is given by
\begin{equation}
\label{timeest}
\begin{aligned}
\E_m^{\TT,1}:=&\sum_{n=1}^m\int_{t_{n-1}}^{t_n}\frac{(t_n-t)(t-t_{n-1})}{2}\|\big(-\alpha\D^n+g(t)\big)\dtd\|\,dt
+C\sum_{n=1}^m\frac{k_n^3}{24}p_n\h(\dtd),
\end{aligned}
\end{equation}
the space estimators $\E_m^{\Ss,j},\,1\le j\le 3,$ are given by
\begin{equation}
\label{spaceest}
\begin{aligned}
\E_m^{\Ss,1}:=\sum_{n=1}^m\frac{k_n^2}{4}&\h(\dtd),\qquad \E_m^{\Ss,2}:=\sum_{n=1}^m\frac{k_n}{2}p_n\big(\hh(U^{n-1})+\h(U^n)\big), 
\\&\text{and }\quad \E_m^{\Ss,3}:=\sum_{n=1}^mk_n\het(\frac{U^n}{k_n},\frac{U^{n-1}}{k_n}),
\end{aligned}
\end{equation}
and the coarsening and data estimators $\E_m^{\mathrm{C}}$ and $\E_m^{\mathrm{D}}$ are
\begin{equation}
\label{coarsening}
\E_m^{\mathrm{C}}:=\sum_{n=1}^m\int_{t_{n-1}}^{t_n}\|(\II-\varPi^n)\big(\frac{U^{n-1}}{k_n}+\iu\alpha\ell_0^n(t)\D^{n-1}U^{n-1}\big)\|\,dt,
\end{equation}
and
\begin{equation}
\label{data}
\E_m^{\mathrm{D}}:=\sum_{n=1}^m\int_{t_{n-1}}^{t_n}\Big[\|\Pro^nG_U(t)-(gU)(t)\|+\|f(t)-\Pro^nF(t)\|\Big]\,dt,
\end{equation}
respectively.
\end{proposition}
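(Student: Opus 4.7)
The plan is to derive an $L^\infty(L^2)$ bound on $\hat\rho$ by testing the error equation \eqref{erreq3} with $\phi=\hat\rho(t)$ and exploiting the skew-symmetric structure characteristic of Schr\"odinger equations. Since $\alpha$ and $g(\cdot,t)$ are real valued, both $\iu\alpha\langle\nabla\hat\rho,\nabla\hat\rho\rangle$ and $\iu\langle g(t)\hat\rho,\hat\rho\rangle$ are purely imaginary, so taking the real part of \eqref{erreq3} with $\phi=\hat\rho(t)$ yields
\begin{equation*}
\tfrac12\tfrac{d}{dt}\|\hat\rho(t)\|^2=\sum_{j=1}^{4}\Rea\langle \Rr_j(t),\hat\rho(t)\rangle\le \|\hat\rho(t)\|\sum_{j=1}^{4}\|\Rr_j(t)\|.
\end{equation*}
A standard argument (dividing by $\|\hat\rho(t)\|$ where nonzero) gives $\frac{d}{dt}\|\hat\rho(t)\|\le\sum_j\|\Rr_j(t)\|$, and integrating from $0$ to any $t\le t_m$ produces
\begin{equation*}
\|\hat\rho(t)\|\le\|\hat\rho(0)\|+\sum_{n=1}^{m}\int_{I_n}\sum_{j=1}^{4}\|\Rr_j(s)\|\,ds.
\end{equation*}
The initial term is handled by noting that $\widehat U(0^+)=\R^0U^0$ (from the proposition preceding \eqref{erreq1}), so $\hat\rho(0)=u_0-\R^0U^0$, which is precisely the first term in \eqref{estmainerr}.

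Next I would bound each residual term by integrating over $I_n$ and summing. For $\Rr_2$ in \eqref{residual2}, the triangle inequality and $|g(t)-\bg|\le p_n$ yield two contributions: the integral of $\tfrac{1}{2}(t_n-t)(t-t_{n-1})\|(-\alpha\D^n+g(t))\dtd\|$ gives the first piece of $\E_m^{\TT,1}$, while the second uses Lemma's bound $\|(\R^n-\II)\dtd\|\le C\h(\dtd)$ together with $\int_{I_n}(t_n-t)(t-t_{n-1})\,dt=k_n^3/6$ to produce the $p_n\h(\dtd)$ piece of $\E_m^{\TT,1}$ (absorbing numerical constants into $C$). For $\Rr_3$ in \eqref{residual3}, using $|g(t)-\bg|\le p_n$, $\int_{I_n}\ell_j^n=k_n/2$, and \eqref{ellipticerror1} applied to $U^{n-1}$ and $U^n$ gives $\E_m^{\Ss,2}$. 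For $\Rr_4$ in \eqref{residual4}, a direct triangle inequality on the integral produces $\E_m^{\mathrm{D}}$.

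The most delicate term is $\Rr_1$, for which I would invoke the rewriting \eqref{residual11} from Lemma~\ref{resrew}. The first piece, $(t-t_{n-\tfrac12})(\R^n-\II)\dtd$, integrates against $\int_{I_n}|t-t_{n-\tfrac12}|\,dt=k_n^2/4$ and \eqref{ellipticerror1} delivers the $\E_m^{\Ss,1}$ contribution. The second piece is constant in $t$, so its $L^1(I_n)$ norm equals $\|(\R^n-\II)U^n-(\R^{n-1}-\II)U^{n-1}\|$; applying \eqref{ellipticerror2} and using linearity of $\het$ in its arguments to write the bound as $k_n\,\widehat C\,\het(U^n/k_n,U^{n-1}/k_n)$ yields $\widehat C\E_m^{\Ss,3}$ after summation. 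The third piece is exactly $\E_m^{\mathrm{C}}$ by definition.

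The main obstacle here is organizational rather than technical: the payoff of the modified elliptic reconstruction with $\bg$ must be harvested in the residual decomposition. The key is that $(g(t)-\bg)$, not $g(t)$ itself, multiplies the elliptic-reconstruction errors in $\Rr_2$ and $\Rr_3$, which is precisely what allows replacing the global $L^\infty(L^\infty)$-norm of $g$ by the local oscillation $p_n=\sup_{\Om\times I_n}|g(x,t)-\bg|$. Once one verifies that the identities used to split $(-\alpha\D+g(t))\omega(t)=(-\alpha\D+\bg)\omega(t)+(g(t)-\bg)\omega(t)$ (and similarly for $\sigma$) propagate through the residuals $\Rr_j$ as in Proposition~\ref{errormain}, combining the four bounds with the initial-error term gives \eqref{estmainerr}.
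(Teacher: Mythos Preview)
Your proof is correct and follows essentially the same route as the paper: test \eqref{erreq3} with $\phi=\hat\rho$, take real parts to kill the skew-symmetric terms, integrate to obtain $\max_{[0,t_m]}\|\hat\rho\|\le\|\hat\rho(0)\|+\sum_j\int_0^{t_m}\|\Rr_j\|\,dt$, and then bound each $\Rr_j$ exactly as you describe, invoking Lemma~\ref{resrew} for $\Rr_1$ and \eqref{ellipticerror1}--\eqref{ellipticerror2} where needed. Your treatment is in fact more explicit than the paper's (which simply asserts \eqref{estres1}--\eqref{estres2}), and your closing remark on the role of $(g(t)-\bg)$ correctly identifies why the modified elliptic reconstruction yields $p_n$ rather than $\sup|g|$.
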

\begin{proof}
Setting $\phi=\hat\rho$ in \eqref{erreq3} and taking real parts yields
$$\frac 12\frac{d}{dt}\|\hat\rho(t)\|^2=\Rea\big\langle\sum_{j=1}^4\Rr_j(t),\hat\rho(t)\big\rangle\le\sum_{j=1}^4\|\Rr_j(t)\|\,\|\hat\rho(t)\|,\quad t\in I_n,$$
or,
\begin{equation}
\label{energy1}
\max_{0\le t\le t_m}\|\hat\rho(t)\|\le\|\hat\rho(0)\|+\sum_{j=1}^4\int_0^{t_m}\|\Rr_j(t)\|\,dt.
\end{equation}
Then, it is easily seen that
\begin{equation}
\label{estres1}
\int_0^{t_m}\|\Rr_1(t)\|\,dt\le \E_m^{\Ss,1}+\E_m^{\Ss,3}+\E_m^{\mathrm{C}};
\end{equation}
cf.\ \eqref{residual11}, and
\begin{equation}
\label{estres2}
\int_0^{t_m}\|\Rr_2(t)\|\,dt\le\E_m^{\TT,1},\quad \int_0^{t_m}\|\Rr_3(t)\|\,dt\le\E_m^{\Ss,2},\quad \int_0^{t_m}\|\Rr_4(t)\|\,dt\le\E_m^{\mathrm{D}};
\end{equation}
cf.\ \eqref{residual2}--\eqref{residual4}. Going back to \eqref{energy1} and plugging in \eqref{estres1}--\eqref{estres2} we readily obtain \eqref{estmainerr}.
\end{proof}
\begin{remark}[optimal order of the estimators in \eqref{estmainerr}]
\rm
It is clear that the space estimators $\E_m^{\Ss,j},\,1\le j\le 3,$ are expected to be of optimal order of accuracy in space. In fact, estimator $\E_m^{\Ss,1}$ is expected to be of optimal order in space and of order one in time, i.e., it is a superconvergent term. As far as the first part of the time estimator $\E_m^{\TT,1}$ is concerned, we note that
$$\int_{t_{n-1}}^{t_n}\frac{(t_n-t)(t-t_{n-1})}{2}\big\|\big(-\alpha\D^n+g(t)\big)\dtd\big\|\,dt\le\frac{k_n^3}{12}\sup_{t\in I_n}\big\|\big(-\alpha\D^n+g(t)\big)\dtd\big\|.$$ 
So, it is expected to be of optimal order  of accuracy in time. Numerically, this term can be computed by invoking a quadrature in time, which is at least  second order accurate (i.e., at least as accurate as the accuracy of the discretization method in time). The second part of $\E_m^{\TT,1}$ is expected to be of optimal order in both time and space. On the other hand, note that estimator $\E_m^{\mathrm{C}}$ is not identically zero, only during the coarsening procedure. Finally, for the estimators related to the data of the problem we have
$\|u_0-\R^0U^0\|\le\|u_0-U^0\|+C\eta_{\V^0}(U^0)$
and
$\|\Pro^nG_{U}(t)-(gU)(t)\|\le\|(\II-\Pro^n)G_U(t)\|+\|(G_U-gU)(t)\|.$
The term $\|f(t)-\Pro^nF(t)\|$ is handled similarly. Thus, it is straightforward to see that $\E_m^{\mathrm{D}}$ can be split into optimal order estimators in time and space, while $\|u_0-\R^0U^0\|$ is easily estimated a posteriori via optimal order estimators in space.
\end{remark}
\begin{remark}[the constants $p_n$]
\rm
For the constants $p_n$ we note that $p_n\le p_{n,1}+p_{n,2}$ with $p_{n,1}:=\sup_{\Om\times I_n}|g(x,t)-g(x,t_{n-\frac 12})|$ and $p_{n,2}=\frac 12 \big[\sup_{x\in\Om}|g(x,t_{n-\frac 12})|-\inf_{x\in\Om}|g(x,t_{n-\frac 12})|\big].$ Therefore, $p_{n,1}=\mathcal{O}(k_n),$ while $p_{n,2}$ is relatively small, provided that $g$ does not change much, with respect to the spatial variable. More precisely, $p_{n,2}\equiv 0$ when $g$ is constant in space, while the estimators that are multiplied by $p_n$ in \eqref{estmainerr} vanish for constant potentials. This particular behavior of the estimators is natural from physical point of view.
\end{remark}

We conclude with the main theorem of the paper.
\begin{theorem}[a posteriori error estimate in the $L^\infty(L^2)-$norm]
Let $u$ be the exact solution of \eqref{LS1} and let $U$ be the continuous approximation \eqref{LIN} of $u$ related to the modified Crank-Nicolson-Galerkin method \eqref{CNGS}. Then, the following  estimate is valid for $1\le m\le N$:
\begin{equation}
\label{apostest}
\begin{aligned}
\max_{0\le t\le t_m}\|(u-U)(t)\|\le\|u_0-\R^0U^0\|+\E_m^{\TT,0}+\E_m^{\TT,1}+C\sum_{j=0}^2\E_m^{\Ss,j}+\widehat C\E_m^{\Ss,3}+\E_m^{\mathrm{C}}+\E_m^{\mathrm{D}},
\end{aligned}
\end{equation}
where $\E_m^{\TT,1},\, \E_m^{\Ss,j},\,1\le j\le3,\,\E_m^{\mathrm{C}},\,\E_m^{\mathrm{D}}$ are given by \eqref{timeest}, \eqref{spaceest},    \eqref{coarsening} and \eqref{data} and $\E_m^{\TT,0},\,\E_m^{\Ss,0}$ are as in \eqref{timerecer} and \eqref{ellipticerr}, respectively.
\end{theorem}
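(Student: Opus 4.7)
The proof is essentially a synthesis of the three propositions already established in Section~\ref{apost}, combined via the triangle inequality applied to the decomposition $e=\hat\rho+\sigma+\epsilon$ introduced at the start of the section.

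The plan is the following. First, I would recall the error splitting
\[
u-U=\hat\rho+\sigma+\epsilon,\qquad \hat\rho:=u-\widehat U,\ \sigma:=\widehat U-\omega,\ \epsilon:=\omega-U,
\]
and take the $L^\infty(L^2)$-norm on $[0,t_m]$, so that by the triangle inequality
\[
\max_{0\le t\le t_m}\|(u-U)(t)\|\le \max_{0\le t\le t_m}\|\hat\rho(t)\|+\max_{0\le t\le t_m}\|\sigma(t)\|+\max_{0\le t\le t_m}\|\epsilon(t)\|.
\]
Each term on the right is already controlled: the time-reconstruction error $\sigma$ is bounded by $\E_m^{\TT,0}$ in Proposition~\ref{timerecerror}, the elliptic-reconstruction error $\epsilon$ is bounded by $C\E_m^{\Ss,0}$ in Proposition~\ref{ellipterror}, and the main error $\hat\rho$ is bounded by Proposition~\ref{mainerror} with
\[
\|u_0-\R^0U^0\|+\E_m^{\TT,1}+C(\E_m^{\Ss,1}+\E_m^{\Ss,2})+\widehat C\E_m^{\Ss,3}+\E_m^{\mathrm{C}}+\E_m^{\mathrm{D}}.
\]

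Second, I would simply add the three right-hand sides and group the terms by their natural labelling. The time estimators aggregate as $\E_m^{\TT,0}+\E_m^{\TT,1}$, the space estimators as $C\sum_{j=0}^{2}\E_m^{\Ss,j}+\widehat C\E_m^{\Ss,3}$, and the remaining contributions are the coarsening term $\E_m^{\mathrm{C}}$, the data term $\E_m^{\mathrm{D}}$, and the initial error $\|u_0-\R^0U^0\|$. This gives precisely the bound \eqref{apostest}.

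There is no real obstacle here: the content of the theorem is the assembly step, while all the analytical work has already been carried out, namely, the construction of the novel elliptic reconstruction \eqref{ellipticrec}, the definition of the two-point time-space reconstruction \eqref{STR1}, the derivation of the perturbed equation \eqref{erreq3} for $\hat\rho$ together with the identification of the four residuals $\Rr_1,\dots,\Rr_4$, and the energy estimate applied in the proof of Proposition~\ref{mainerror}. The only point requiring a brief remark is that the bound $\|\hat\rho(0)\|=\|u_0-\R^0U^0\|$ used in Proposition~\ref{mainerror} correctly produces the initial-error term in \eqref{apostest}, since $\widehat U(0^+)=\R^0U^0$ by construction. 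Consequently, the proof reduces to stating the decomposition, invoking Propositions~\ref{timerecerror}, \ref{ellipterror} and \ref{mainerror}, and summing.
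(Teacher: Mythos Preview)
Your proposal is correct and follows essentially the same approach as the paper: the paper's proof also writes $u-U=\hat\rho+\sigma+\epsilon$, applies the triangle inequality in the $L^\infty(L^2)$-norm, and then invokes Propositions~\ref{timerecerror}, \ref{ellipterror} and \ref{mainerror} to conclude. Your additional remark about $\widehat U(0^+)=\R^0U^0$ justifying the initial-error term is accurate and simply makes explicit what the paper leaves implicit.
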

\begin{proof}
We write $u-U=\hat\rho+\sigma+\epsilon$, whence, for $1\le m\le N,$ 
$$\max_{0\le t\le t_m}\|(u-U)(t)\|\le\max_{0\le t\le t_m}\|\hat\rho(t)\|+\max_{0\le t\le t_m}\|\sigma(t)\|+\max_{0\le t\le t_m}\|\epsilon(t)\|.$$
Estimate \eqref{apostest} is now an immediate consequence of Propositions~\ref{ellipterror},~\ref{timerecerror} and~\ref{mainerror}.
\end{proof}

\section{Numerical Experiments: Uniform Partition}\label{unif}
In this section, we perform various numerical experiments for the one-dimensional linear semiclassical Schr\"odinger equation:
\begin{equation}
\label{1dsemiclassical}
\p_t u-\mathrm{i}\frac{\ep}{2}\p_{xx}u+\frac{\mathrm{i}}{\ep}V(x,t)u=0\quad\text{ in }(a,b)\times(0,T],
\end{equation}
using uniform partitions. 
Our experiments, not only illustrate and complement our theoretical results, but also give important information in several other interesting aspects, like the behavior of the estimators with respect to the parameter $\ep$. At the moment, the particular behavior can only be proven formally; cf.\ Subsection~\ref{sensitivity}. In all of the numerical experiments, the initial data is of the well known semiclassical WKB form:
\begin{equation}
\label{incond}
u_0(x)=\sqrt{n_0(x)}\mathrm{e}^{\mathrm{i}\frac{S_0(x)}{\ep}}.
\end{equation}
In \eqref{incond}, $n_0$ and $S_0$ are real and smooth functions on $[a,b]$. In addition, $n_0$ is positive on $(a,b)$ and vanishes (numerically) at the endpoints $a$ and $b$.

The modified Galerkin-Crank-Nicolson method \eqref{CNGS} and the corresponding a posteriori error estimators for problem \eqref{1dsemiclassical}-\eqref{incond} with homogeneous Dirichlet  boundary conditions, were implemented in a double precision C-code, using  B-splines of degree $r,\, r\in\mathbb{N},$ as a basis for the finite element space $\V^n,\, 0\le n\le N$. The involved projections $\varPi^n$ and $\Pro^n$ in \eqref{CNGS} are taken to be the $L^2-$projection onto $\V^n.$ 

In what follows, we present some characteristic examples that allow us to verify the correct order of convergence of the estimators in time and space, and their dependence on the Planck constant $\ep$. We also report on the relation between the time and space mesh sizes with respect to $\ep$ in order to have convergence. 
\subsection{$\text{EOC}$  of the estimators}
%
We proceed by studying two different cases.  The first one concerns time-independent potentials, while in the second one we consider a time-dependent potential.

\textsc{{Experiment 1}} (Time-independent potentials). Here, we consider three well-known types of potential: a constant potential, a harmonic oscillator and a double-well potential (\cite{S1,FFS,MM}). In all three examples, the Planck constant is taken to be of order 1. More precisely, we study the following cases:
\begin{enumerate}[a.]
\item $V(x)=100$, $\sqrt{n_0(x)}=\mathrm{e}^{-\frac{25}2 x^2}$, $S_0(x)=\frac{x^2}{2},$  and $\ep=1$;
\item $V(x)=\frac{x^2}{2}$, $\sqrt{n_0(x)}=\mathrm{e}^{-25(x-0.5)^2}$, $S_0(x)=1+x,$ and $\ep=0.5$;
\item $V(x)=(x^2-0.25)^2=x^4-\frac 1 2 x^2+\frac 1 {16}$, $\sqrt{n_0(x)}=\mathrm{e}^{-\frac{25}2 x^2}$, $S_0(x)=-\frac{1}{5}\ln\Big(\mathrm{e}^{5(x-0.5)}+\mathrm{e}^{-5(x-0.5)}\Big),$ and  $\ep=0.25.$ 
\end{enumerate}
All  computations are performed in $[a,b]\times[0,T]=[-2,2]\times[0,1].$ Our purpose  is to compute the \emph{experimental order of convergence ($\text{EOC}$)} of the a posteriori error estimators at the final time  $T=1$. For this, we consider uniform partitions in both time and space. If we denote by $r$ the degree of B-splines used for the discretization in space, then in each implementation, the relation between the mesh size $h$ and the time step $k$ is taken to be
\begin{equation}
\label{hkrelation}
h\approx k^{\frac{2}{r+1}}
\end{equation}
with equality, whenever possible. We also denote by $M=\frac{b-a}{h}.$ Then, for each space estimator $\E_N^{\Ss,j},\,0\le j\le 3,$ the $\text{EOC}$ is computed as
\begin{equation}
\label{EOCS}
\text{EOC}:=\frac{\log\Big(\E_N^{\Ss,j}(\ell)/ \E_N^{\Ss,j}(\ell+1)\Big)}{\log\Big(M(\ell+1)/M(\ell)\Big)},
\end{equation}
where $\E_N^{\Ss,j}(\ell)$ and $\E_N^{\Ss,j}(\ell+1)$ denote the value of the estimators in two consecutive implementations with mesh sizes $h(\ell)=\frac{b-a}{M(\ell)}$ and $h(\ell+1)=\frac{b-a}{M(\ell+1)}$, respectively. Note that $\E_N^{\Ss,1}$ is expected to be of optimal order in space and of order $1$ in time, i.e., it is a superconvergent term. Therefore, the $\text{EOC}$ we expect to observe is $h^{r+1}\cdot h^{\frac{r+1}{2}}=h^{\frac{3}{2}(r+1)},$ due to \eqref{hkrelation} and \eqref{EOCS}. Similarly, for the time estimators $\E_N^{\TT,j},\,0\le j\le1,$ the $\text{EOC}$ is computed as 
\begin{equation}
\label{EOCT}
 \text{EOC}:=\frac{\log\Big(\E_N^{\TT,j}(\ell)/ \E_N^{\TT,j}(\ell+1)\Big)}{\log\Big(k(\ell)/k(\ell+1)\Big)}. 
 \end{equation}

 We are also interested in computing the \emph{effectivity index}, defined as the ratio between the total a posteriori error estimator and the corresponding norm of the exact error. Since we do not have at our disposal the exact solution for the three examples, we compute a reference solution $u_{\mathrm{ref}}$ instead, by taking very fine mesh and time step. In particular, we take as $k_{\mathrm{ref}}^{-1}=40960$, while in space we discretize by B-splines of degree 5 and take as $h_{\mathrm{ref}}^{-1}=120.$ Then, the reference error is defined as $\mathrm{Eref}:=\displaystyle\max_{0\le n\le N}\|u_{\mathrm{ref}}(t_n)-U^n\|.$ In addition, we define 
 $$\E_N^{\text{total}}:=\|u_0-U^0\|+\eta_{\V^0}(U^0)+\E_N^{\TT,0}+\E_N^{\TT,1}+\sum_{j=0}^3\E_N^{\Ss,j}+\E_N^{\mathrm{D}},$$
 and we compute the effectivity index $ei$ as $ei:=\E_N^{\text{total}}/\mathrm{Eref}.$ Note that for uniform partitions, the coarsening estimator $\E_N^{\mathrm{C}}$ is identically zero. Our findings are reported in Tables~\ref{tbl1}--\ref{tbl6}.
 
 In the case of constant potential $V(x)=100$, we discretize in space by linear B-splines. We recall that in this case $\E_N^{\Ss,2}$ is identically zero and does not appear in Table~\ref{tbl1}. As we see in Tables~\ref{tbl1}, \ref{tbl2}, all estimators decrease with the correct order. 
\begin{table}[htb!]
\begin{center}
\renewcommand{\arraystretch}{1.0}
\begin{tabular}{|l||cc|cc|ccc|}\hline
$M$ &  $ \E_N^{\Ss,0}$ & $\text{EOC}$&$\E_N^{\Ss,1}$ & $\text{EOC}$&$\E_N^{\Ss,3}$ &$\text{EOC}$&
\\ \hline\hline
$640$  &$5.0445$e$-04$ &-- &$1.3289$e$-02$ &-- &$6.1493$e$-02$ & --& \\
$1280$  &$1.2609$e$-04$ &$2.0003$ & $1.7796$e$-03$& $2.9006$&$1.6361$e$-02$  &$1.9102$ &   \\
$2560$  &$3.1522$e$-05$  &$2.0000$ & $2.2677$e$-04$& $2.9722$&$4.1610$e$-03$ &$1.9753$ & \\
$5120$  &$7.8804$e$-06$ & $2.0000$&$2.8488$e$-05$ & $2.9928$ &$1.0448$e$-03$&$1.9937$ &  \\
$10240$  & $1.9701$e$-06$& 2.0000&$3.5665$e$-06$ &$2.9978$ & $2.6150$e$-04$ &$1.9983$ & \\ \hline
\end{tabular}\\[2ex]
\caption{Space estimators $\E_N^{\Ss,j},\, j=0,1,3,$ and $\text{EOC}$ for Experiment 1a.} \label{tbl1}
\end{center}
\end{table}
\begin{table}[ht]
\begin{center}
\renewcommand{\arraystretch}{1.0}
\begin{tabular}{|l||cc|cc||cccc|}\hline
$k^{-1}$ &  $ \E_N^{\TT,0}$ & $\text{EOC}$ & $\E_N^{\TT,1}$ & $\text{EOC}$ &  $\mathrm{Eref}$ & $\E^{\text{total}}_N$ & $ei$ &
\\ \hline\hline
$160$  &$3.1810$e$-02$ &-- & $2.3471$&-- & $1.1329$ & $2.4547$&$2.1668$ & \\
$320$  &$8.2836$e$-03$ &$1.9411$ &$6.1356$e$-01$ &$1.9356$ &$5.4529$e$-01$ &$6.4024$e$-01$ &$1.1741$ &   \\
$640$  &$2.0935$e$-03$  &$1.9843$ & $1.5524$e$-01$&$1.9827$ &$1.5026$e$-01$ &$1.6178$e$-01$ &$1.0767$ & \\
$1280$  & $5.2481$e$-04$& $1.9960$&$3.8928$e$-02$ &$1.9956$  &$3.8853$e$-02$ & $4.0541$e$-02$&$1.0434$ & \\
$2560$ &$1.3129$e$-04$&$2.0090$ &$9.7395$e$-03$ &$1.9989$  & $9.6973$e$-03$& $1.0140$e$-02$& $1.0456$&  \\ \hline
\end{tabular}\\[2ex]
\caption{Time estimators $\E_N^{\TT,j},\,j=0,1,$ and $\text{EOC}$, total estimator  $\E^{\text{total}}_N$, reference error $\mathrm{Eref},$ and effectivity index $ei$  for Experiment 1a.} \label{tbl2}
\end{center}
\end{table}
We observe that the total error is mainly due to the time estimator $\E_N^{\TT,1}$, while the effectivity index is around 1.04, i.e., the total estimator $\E_N^{\text{total}}$ is very close to the reference error. However constant potentials are the simplest; actually, from physical point of view, having a constant potential is like having no potential at all. 

In Tables~\ref{tbl3}, \ref{tbl4} the results for the harmonic oscillator (1b) are presented. We use  quadratic B-splines for the discretization in space. The correct order of convergence is observed for all  estimators.  The dominant estimator for the harmonic oscillator is $\E_N^{\Ss,3}$, while the effectivity index tends asymptotically to the constant value $4.5$.
\begin{table}[htb!]
\begin{center}
\renewcommand{\arraystretch}{1.0}
\begin{tabular}{|l||cc|cc|cc|ccc|}\hline
$M$ &  $ \E_N^{\Ss,0}$ & $\text{EOC}$&$\E_N^{\Ss,1}$ & $\text{EOC}$& $\E_N^{\Ss,2}$& $\text{EOC}$&$\E_N^{\Ss,3}$ &$\text{EOC}$&
\\ \hline\hline
$75$  &$1.3042$e$-02$ &-- &$1.5619$e$-01$ &-- &$2.4735$e$-02$ & --&$6.3657$e$-01$ & --&\\
$120$  &$3.1817$e$-03$ &$3.0016$ &$2.1398$e$-02$ &$4.2293$ &$6.0463$e$-03$  &$2.9974$ &$1.6747$e$-01$  &$2.8410$ & \\
$185$  &$8.6805$e$-04$  &$3.0008$ &$3.0282$e$-03$ &$4.5172$ &$1.6509$e$-03$ &$2.9989$ &$4.6712$e$-02$ &$2.9497$ &\\
$295$  &$2.1405$e$-04$ &$3.0004$ &$3.7707$e$-04$ & $4.4646$ &$4.0737$e$-04$&$2.9989$ &$1.1585$e$-02$ &$2.9881$ & \\
$470$  &$5.2927$e$-05$  &$3.0000$ &$4.6730$e$-05$ &$4.4831$ &$1.0077$e$-04$  &$2.9992$ &$2.8684$e$-03$ &$2.9972$ &\\
$750$  &$1.3025$e$-05$   &$3.0000$ &$5.7532$e$-06$ &$4.4820$ &$2.4807$e$-05$ &$2.9993$ &$7.0610$e$- 04$& $2.9994$& \\ \hline
\end{tabular}\\[2ex]
\caption{Space estimators $\E_N^{\Ss,j},\, 0\le j\le 3,$ and $\text{EOC}$ for Experiment 1b.} \label{tbl3}
\end{center}
\end{table}
\begin{table}[ht]
\begin{center}
\renewcommand{\arraystretch}{1.0}
\begin{tabular}{|l||cc|cc||cccc|}\hline
$k^{-1}$ &  $ \E_N^{\TT,0}$ & $\text{EOC}$ & $\E_N^{\TT,1}$ & $\text{EOC}$ &  $\mathrm{Eref}$ & $\E^{\text{total}}_N$ & $ei$ &
\\ \hline\hline
$80$  &$5.7695$e$-03$ &-- &$2.0831$e$-01$ &-- & $1.0412$e$-01$&$1.0596$ &$10.1767$ &   \\
$160$  & $1.3258$e$-03$&$2.1216$ & $5.6917$e$-02$&$1.8718$ &$4.3944$e$-02$  & $2.6041$e$-01$&$5.9259$ &  \\
$320$  &$3.2430$e$-04$  &$2.0314$ &$1.4648$e$-02$ &$1.9581$ &$1.3199$e$-02$ &$6.8543$e$-02$ &$5.1930$ &  \\
$640$  &$8.0510$e$-05$ &$2.0101$ &$3.6910$e$ -03$&$1.9886$  &$3.5667$e$-03$&$1.6784$e$-02$ &$4.7057$ & \\
$1280$ &$2.0093$e$ -05$ & $2.0025$&$9.2463$e$-04$ &$1.9971$  &$9.2127$e$-04$ &$4.1723$ e$-03$&$4.5289$ &  \\ 
$2560$  &$5.0210$e$-06$ &$2.0006$ &$2.3128$e$-04$ &$1.9992$  &$2.3004$e$-04$ &$1.0513$e$-03$ &$4.5701$ &\\\hline
\end{tabular}\\[2ex]
\caption{Time estimators $\E_N^{\TT,j},\, j=0,1,$ and $\text{EOC}$, total estimator $\E^{\text{total}}_N$, reference error $\mathrm{Eref}$, and effectivity index $ei$  for Experiment 1b.} \label{tbl4}
\end{center}
\end{table}
%

%
\begin{table}[htb!]
\begin{center}
\renewcommand{\arraystretch}{1.0}
\begin{tabular}{|l||cc|cc|cc|ccc|}\hline
$M$ &  $ \E_N^{\Ss,0}$ & $\text{EOC}$&$\E_N^{\Ss,1}$ & $\text{EOC}$& $\E_N^{\Ss,2}$& $\text{EOC}$&$\E_N^{\Ss,3}$ &$\text{EOC}$&
\\ \hline\hline
$35$  &$2.2902$e$-02$ &-- &$3.0041$e$-02$ &-- &$3.7853$e$-01$ & --& $3.0978$e$-01$ & --&\\
$50$  &$5.1709$e$-03$ &$ 4.1724$&$3.5161$e$-03$ &$6.0145$ &$8.7250$e$-02$ &$4.1144$ &$7.1970$e$-02$ &$4.0923$ & \\
$70$  &$1.2925$e$-03$ &$4.1206$ &$4.3823$e$-04$ &$6.1888$ &$2.2013$e$-02$ &$4.0929$ &$1.8017$e$-02$ &$4.1160$&\\
$100$  &$3.0294$e$-04$&$4.0676$ &$5.0898$e$-05$ &$6.0361$ &$5.1836$e$-03$ &$4.0545$ &$4.2076$e$-03$ &$4.0777$ & \\
$145$  &$6.7657$e$-05$&$4.0345$  &$5.6498$e$-06$ &$5.9161$  & $1.1609$e$-03$&$4.0270$ &$9.3722$e$-04$ &$4.0416$ &\\
$200$  &$1.8587$e$-05$&$4.0176$ &$7.7427$e$-07$ &$6.1802$ &$3.1941$e$-04$ &$4.0129$ & $2.5721$e$-04$&$4.0208$ & \\  
\hline
\end{tabular}\\[2ex]
\caption{Space estimators $\E_N^{\Ss,j},\, 0\le j\le 3,$ and $\text{EOC}$ for Experiment 1c.} \label{tbl5}
\end{center}
\end{table}
\begin{table}[htb!]
\begin{center}
\renewcommand{\arraystretch}{1.0}
\begin{tabular}{|l||cc|cc||cccc|}\hline
$k^{-1}$ &  $ \E_N^{\TT,0}$ & $\text{EOC}$ & $\E_N^{\TT,1}$ & $\text{EOC}$ &  $\mathrm{Eref}$ & $\E^{\text{total}}_N$ & $ei$ &
\\ \hline\hline
$80$  &$1.2555$e$-03$ &-- &$1.6010$e$-02$ &-- &  $1.2414$e$-02$&$7.8510$e$-01$ &$63.2431$ &   \\
$160$  &$2.5490$e$-04$ &$2.3003$ &$3.6767$e$-03$ &$2.1225$ &$3.4441$e$-03$ & $1.7773$e$-01$&$51.6042$ &  \\
$320$  &$6.0463$e$-05$ &$2.0758$ &$9.0229$e$-04$ & $2.0267$& $9.7835$e$-04$&$4.4277$e$-02$ &$45.2568$ &  \\
$640$  &$ 1.4911$e$-05$ &$2.0197$ &$2.2452$e$-04$ &$2.0067$ &$2.1739$e$-04$ &$1.0409$e$-02$ & $47.8817$& \\
$1280$ &$3.7157$e$-06$ &$2.0047$&$5.6068$e$-05$&$2.0016$ &$5.5890$e$-05$ &$2.3622$e$-03$ &$42.2652$ &  \\ 
$2560$  &$9.2832$e$-07$ &$2.0009$ &$1.4014$e$-05$ &$2.0003$ &$1.3946$e$-05$ &$6.6728$e$-04$ &$47.8474$ &\\\hline
\end{tabular}\\[2ex]
\caption{Time estimators $\E_N^{\TT,j},\, j=0,1,$ and $\text{EOC}$, total estimator $\E^{\text{total}}_N$, reference error $\mathrm{Eref}$, and effectivity index $ei$  for Experiment 1c.} \label{tbl6}
\end{center}
\end{table}

Finally, for the double-well potential (1c), we discretize in space by cubic B-splines. The results are listed  in Tables~\ref{tbl5}, \ref{tbl6}. For this example, the effectivity index seems to be asymptotically constant (around $47.8$), but it is certainly larger compared to the previous two examples. This is maybe an indicator that the presented analysis can be improved, in order to end-up with better effectivity indices. Effectivity indices of this size were also observed in experiments for the two-dimensional heat equation, for backward Euler finite element schemes (\cite{LM}) and for the corresponding to \eqref{CNGS} method (\cite{BKM2}). 

\textsc{{Experiment 2}} (A time-dependent potential). In the second experiment we consider the time-dependent potential $V(x,t)=(1+t)^2\frac{x^2}{2}.$ Such potentials were studied for example in \cite{Leach, CGMPS}. In order to have an example where we can evaluate the exact error, instead of solving numerically problem \eqref{1dsemiclassical}--\eqref{incond} with zero Dirichlet boundary conditions, we replace \eqref{1dsemiclassical} by
\begin{equation}
\label{1deq2}
\p_t u-\frac{\mathrm{i}}{2}\p_{xx}u+\mathrm{i}V(x,t)u=f(x,t)
\end{equation}
(for this experiment, $\ep=1$). We consider as exact solution $u(x,t)=\mathrm{e}^{-25(x-t)^2}\mathrm{e}^{\mathrm{i}(1+t)(1+x)}$ and we calculate $f$ through \eqref{1deq2}.

We take again $[a,b]\times[0,T]=[-2,2]\times[0,1]$ and we  perform the same computations as in Experiment 1. In space, we discretize by quadratic B-splines. The numerical results are reported in Tables~\ref{tbl7}, \ref{tbl8}. 
%
\begin{table}[htb!]
\begin{center}
\renewcommand{\arraystretch}{1.0}
\begin{tabular}{|l||cc|cc|cc|ccc|}\hline
$M$ &  $ \E_N^{\Ss,0}$ & $\text{EOC}$&$\E_N^{\Ss,1}$ &$\text{EOC}$& $\E_N^{\Ss,2}$& $\text{EOC}$&$\E_N^{\Ss,3}$ &$\text{EOC}$&
\\ \hline\hline
$75$  &$1.3090$e$-02$  &-- &$7.2318$e$-03$  &-- &$2.8864$e$-02$  & --&$1.4480$e$-01$ & --& \\
$120$  &$3.1864$e$-03$ &$3.0063$ &$7.9989$e$-04$ &$4.6846$ &$7.0052$e$-03$ &$3.0126$ &$3.5234$e$-02$ &$3.0071$ & \\
$185$  &$8.6868$e$-04$ &$3.0025$ &$1.0649$e$-04$ &$4.6583$ &$1.9068$e$-03$ &$3.0061$ &$9.6015$e$-03$ &$3.0035$ &   \\
$295$ &$2.1412$e$-04$ &$3.0012$ &$1.3072$e$-05$ &$4.5036$ &$4.6970$e$-04$ &$3.0026$ &$2.3665$e$-03$ & $3.0069$&\\
$470$  &$5.2935$e$-05$ &$3.0004$ &$1.6152$e$-06$ &$4.4895$ &$1.1608$e$-04$ &$3.0012$ &$5.8502$e$-04$  &$3.0005$ &\\
$750$  & $1.3026$e$-05$&$3.0002$ &$1.9878$e$-07$ &$4.4826$ &$2.8560$e$-05$ &$3.0005$ &$1.4396$e$-04$ &$3.0002$ &\\
$1190$ &$3.2610$e$-06$ &$3.0000$ &$2.4920$e$-08$ &$4.4982$ &$7.1492$e$-06$ &$3.0002$ &$3.6039$e$-05$ &$3.0000$ &\\ 
$1885$ &$8.2045$e$-07$ &$3.0000$ &$3.7518$e$-09$ &$4.1164$ &$1.7986$e$-06$ &$3.0001$ &$9.0671$e$-06$ &$3.0001$ &\\ \hline
\end{tabular}\\[2ex]
\caption{Space estimators $\E_N^{\Ss,j},\, 0\le j\le 3,$ and $\text{EOC}$ for Experiment 2.} \label{tbl7}
\end{center}
\end{table}
\begin{table}[ht]
\centering
\renewcommand{\arraystretch}{1.0}
\begin{tabular}{|l||cc|cc||cccc|}\hline
$k^{-1}$ &  $ \E_N^{\TT,0}$ & $\text{EOC}$ & $\E_N^{\TT,1}$ & $\text{EOC}$ &  $\mathrm{Eex}$ & $\E^{\text{total}}_N$ & $ei$ &
\\ \hline\hline
$80$  &$6.9252$e$-04$ &-- &$3.3241$e$-02$ &-- &$6.6552$e$-04$ &$2.6595$e$-01$ &$399.612$ & \\
$160$  &$1.6443$e$-04$ &$2.0744$ &$7.7801$e$-03$ &$2.0951$  &$1.6474$e$-04$  &$6.2511$e$-02$ &$379.4525$ & \\
$320$  &$4.0878$e$-05$ &$2.0081$ &$1.9170$e$-03$ & $2.0209$ &$4.1787$e$-05$  &$1.6624$e$-02$ &$397.8271$ &   \\
$640$  &$1.0204$e$-05$ &$2.0022$  &$4.7815$e$-04$ & $2.0033$ &$1.0658$e$-05$  &$4.0799$e$ -03$&$382.8016$ & \\
$1280$  & $2.5513$e$-06$&$1.9998$ & $1.1953$e$-04$&$2.0000$  &$2.6883$ e$-06$  &$1.0073$e $-03$ & $374.6978$& \\
$2560$ &$6.3801$e$-07$ &$1.9996$ &$2.9888$e$-05$ &$1.9997$  &$ 6.7413$e$-07$  &$2.4807$e$-04$ &$367.9854$ & \\ 
$5120$  &$1.5988$e$-07$ &$1.9968$ &$7.4898$e$-06$ &$1.9966$  &$1.6935$e$-07$  &$6.2075$e$-05$ &$366.5486$ & \\ 
$10240$  &$3.9973$e$-08$ &$2.0000$ &$1.8728$e$-06$ & $1.9997$ &$4.2433$e$-08$  &$1.5602$e$-05$ &$367.6855$ & \\ \hline
\end{tabular}\\[2ex]
\caption{Time estimators $\E_N^{\TT,j},\, j=0,1,$ and $\text{EOC}$, total estimator $\E^{\text{total}}_N$, exact error $\mathrm{Eex}$,  and effectivity index  $ei$ for Experiment 2.} \label{tbl8}
\end{table}
The correct order of convergence is observed for the estimators. The effectivity index tends asymptotically to a constant value, which is around $368$, which is a strong indication that there maybe room for improvement of the analysis. We point out though, that no a posteriori error bounds of optimal order exist in the literature for time-dependent potentials and any numerical method. It is the first time that a complete a posteriori error analysis is provided and numerically verified for operators of the form $\mathrm{i}(-\D+V(x,t)).$

\subsection{$\ep-$sensitivity of the estimators}\label{sensitivity}
%
In the case of WKB initial data for the problem  \eqref{1dsemiclassical}-\eqref{incond} one can show that 
\begin{equation*}
\sup_{0\le t\le T}\|\frac{\p^mu}{\p t^m} (t)\|=\mathcal{O}(\frac{1}{\ep^m}) \ \text{ and }	\ \sup_{0\le t\le T}\|\frac{\p^mu}{\p x^m} (t)\|=\mathcal{O}(\frac{1}{\ep^m}),\quad m\in\mathbb{N}_0,
\end{equation*}
provided $n_0$, $S_0$ and  $V$ are regular enough; \cite{BJM}. 
In that respect, and assuming that  $U^n,\, 0\le n\le N,$ are reasonably good approximations to $u$ at the nodes $t_n$, we expect the following behavior of the  a posteriori error estimators with respect to the parameter $\ep$:
%
%
\begin{align}
\E_N^{\Ss,0}=\mathcal{O}(\frac{h^{r+1}}{\ep^{r+1}}),\quad \E_N^{\Ss,1}=\mathcal{O}(\frac{h^{r+1}}{\ep^{r+2}}\frac{k}{\ep}), & \quad \E_N^{\Ss,2}=\mathcal{O}(\frac{h^{r+1}}{\ep^{r+2}}),\quad \E_N^{S,3}=\mathcal{O}(\frac{h^{r+1}}{\ep^{r+2}}), \label{econdh} \\
 \E_N^{\TT,0}=\mathcal{O}\Big(\frac{k^2}{\ep^2}(1+\frac{h^{r+1}}{\ep^{r+1}})\Big),&  \quad \E_N^{\TT,1}=\mathcal{O}\Big(\frac{k^2}{\ep^3}(1+\frac{h^{r+1}}{\ep^{r+2}})\Big). \label{econdk}
\end{align}
Relations \eqref{econdh}--\eqref{econdk} give us an idea on how we have to choose the time and space steps so that the estimators converge. The suggested choice seems to be restrictive; however it is the expected one. Indeed the a priori error analysis for CNFE schemes gives that
\begin{equation}
\label{exact}
\max_{0\le n\le N}\|u(t_n)-U^n\|=\mathcal{O}(\frac{h^{r+1}}{\ep^{r+2}}+\frac{k^2}{\ep^3}),
\end{equation}
cf.\ \cite{BJM}, and naturally, conditions \eqref{econdh}-\eqref{econdk} were not expected to be more relaxed.  Next, we verify numerically \eqref{econdh}--\eqref{econdk}. To this end, we consider $\sqrt{n_0(x)}=\mathrm{e}^{-25(x-0.5)^2}$, $S_0(x)=-\frac{1}{5}\ln\Big(\mathrm{e}^{5(x-0.5)}+\mathrm{e}^{-5(x-0.5)}\Big),$ and the constant  potential $V(x)=10.$ We solve numerically problem \eqref{1dsemiclassical}--\eqref{incond} in $(a,b)\times(0,T]=(-1,2)\times(0,0.54],$ for $\ep=0.005$ and $\ep=0.001$, using B-splines of degree $1$ or $3$. Since the potential is taken to be constant, estimator $\E_N^{\Ss,2}$ is identically zero. The particular  example has been considered earlier in \cite{BJM} (see also \cite{MPPS}) and it is interesting because caustics are formed before the final time $T=0.54$. 

First, we consider the case $\ep=0.005$. We discretize by B-splines of degree $1$ and we consider uniform partitions in both time and space with $k=h$. The behavior of the space and time a posteriori error estimators are reported in Table~\ref{tbl9}.
\begin{table}[htb!]
\centering
\renewcommand{\arraystretch}{1.0}
\begin{tabular}{|l||ccc|ccc|}\hline
$k=h$ &  $ \E_N^{\Ss,0}$ &$\E_N^{\Ss,1}$ &$\E_N^{\Ss,3}$ &$\E_N^{\TT,0}$&$\E_N^{\TT,1}$& 
\\ \hline\hline
$10^{-2}$  &$5.9846$e$-01$ &$3.3162$e$+02$ &$6.4332$e$+01$ &$5.5855$ &$1.8291$e$+03$ & \\
$10^{-3}$ &$5.1220$e$-03$ &$2.0283$ &$3.9587$ &$1.8122$e$-01$ &$1.3029$e$+02$ &  \\
$5\times 10^{-4}$  &$1.2789$e$-03$&$3.2267$e$-01$ &$1.2595$ &$5.7015$e$-02$ &$4.1315$e$+01$ & \\
$10^{-4}$  &$5.1137$e$-05$ &$2.8842$e$-03$ &$5.6296$e$-02$ &$2.5351$e$-03$ &$1.8417$ & \\
$5\times 10^{-5}$  &$1.2784$e$-05$  &$3.6194$e$-04$ &$1.4128$e$-02$ &$6.3615$e$-04$ &$4.6218$e$-01$ & \\
$10^{-5}$&$5.1136$e$-07$ &$3.2460$e$-06$ &$5.6582$e$-04$ &$2.5476$e$-05$ &$1.8510$e$-02$ &\\ \hline
\end{tabular}\\[2ex]
\caption{Space estimators $\E_N^{\Ss,j},\, j=0,1,3,$ and time  estimators $\E_N^{\TT,j},\, j=0,1,$  for $\ep=0.005$.} \label{tbl9}
\end{table}
As 
\eqref{econdh} suggests, estimator $\E_N^{\Ss,1}$ has the expected behavior for $k=h\le 5\times 10^{-4}$, while $\E_N^{\Ss,3}$ for $h\le 10^{-4}.$ Similar results, verifying \eqref{econdk}, are observed for the time estimators $\E_N^{\TT,0}$ and $\E_N^{\TT,1}.$ In particular, note that for $k\ge 10^{-4},$ $\E_N^{\TT,1}$ is not reasonable, 
something we expect, 
provided that \eqref{econdk} is true and $\ep=0.005$. Note however, that estimator $\E_N^{\Ss,0}$ behaves better than expected, since for $h=10^{-2}$ it already decays with optimal order.

Next, we consider the case $\ep=0.001$. We discretize in space by cubic B-splines. To verify numerically \eqref{econdh}, we take constant time step,  $k=5\times 10^{-3}$  so that $\frac{k}{\ep}=\mathcal{O}(1)$, and thus be able to see only the effect of the space discretization with respect to $\ep$ for $\E_N^{\Ss,1}.$ As before, in Table~\ref{tbl10}, the stated relation \eqref{econdh} between $h$ and $\ep$ is observed for $\E_N^{\Ss,1}$ and $\E_N^{\Ss,3}$. We also verify the corresponding relation between $h$ and $\ep$ in \eqref{econdh} for $\E_N^{\Ss,0}$. Indeed, despite the fact that $\E_N^{\Ss,0}$ is small, even for $M=600$ ($h=5\times 10^{-3}$),  it does not decay with optimal order. The correct behavior is initiated for  $M=1500$ ($h=2\times 10^{-3}$), and verified for $M=3000$ ($h=10^{-3}$). For the time estimators, \eqref{econdk} is verified with constant mesh size $h=5\times 10^{-4}$ ($M=6000$). Our choice of $h$ is so that $\frac{h^4}{\ep^5}$ is controlled, and allow us to exploit the behavior of $k$ with respect to $\ep$. Our findings are shown in Table~\ref{tbl10}.
%
%
\begin{table}[!htb]
\centering
\renewcommand{\arraystretch}{1.0}
\begin{tabular}{|l||ccc|||c||ccc|}\hline
$M$ &  $ \E_N^{\Ss,0}$ &$ \E_N^{\Ss,1}$  & $ \E_N^{\Ss,3}$ &  $k$ & $ \E_N^{\TT,0}$ & $ \E_N^{\TT,1}$ &
\\ \hline\hline
$600$  &$3.6649$e$-01$ &$2.4658$e$+02$ &$1.9454$e$+03$ &$10^{-3}$ &$1.2389$ &$4.4981$e$+03$ &   \\
$1500$  &$4.6616$e$-02$ &$3.2401$e$+01$ & $2.5136$e$+02$&$5\times 10^{-4}$ &$5.8702$e$-01$ &$2.1314$e$+03$ &  \\
$3000$  &$2.6006$e$-03$ &$1.8065$ &$1.4019$e$+01$ &$10^{-4}$ &$5.6881$e$-02$ & $2.0655$e$+02$&  \\
$4500$  &$4.9896$e$-04$ &$3.4653$e$-01$ &$2.6894$ &$5\times10^{-5}$ &$1.5444$e$-02$ &$5.6083$e$+01$ &  \\
$6000$ &$1.5618$e$-04$ &$1.0846$e$-01$ &$8.4179$e$-01$ &$10^{-5}$ &$6.3630$e$-04$ &$2.3107$ &  \\ 
$7500$& $6.3646$e$-05$&$4.4198$e$-02$ & $3.4304$e$-01$&$5\times 10^{-6}$ &$1.5923$e$-04$ &$5.7822$e$-01$ &\\
$9000$  &$3.0608$e$-05$ &$2.1254$e$-02$ &$1.6497$e$-01$ &$2.5\times 10^{-6}$ &$3.9816$e$-05$ &$1.4459$e$-01$ &\\
\hline
\end{tabular}\\[2ex]
\caption{Space estimators $\E_N^{\Ss,j},\, j=0,1,3,$ with $k=5\times 10^{-5},$ and time estimators  $\E_N^{\TT,j},\, j=0,1,$ with $M=6000$, for $\ep=0.001$.} \label{tbl10}
\end{table}
%

%
%

%
\section{Numerical Experiments: Adaptivity}\label{adapt}
%
In this section, we adjust and further develop a time-space adaptive algorithm for linear Schr\"odinger equations, using the a posteriori error estimators derived earlier. Our goal is to study  numerically the behavior of the estimators under this adaptive algorithm, and investigate the benefits, in terms of computational cost and accuracy, of time-space adaptivity.

To this end, we consider, as in the previous section,  the one-dimensional linear Schr\"odinger  equation in the semiclassical regime, cf.\ \eqref{1dsemiclassical}, along with the WKB initial condition \eqref{incond}. The presented numerical experiments indicate that adaptivity through the a posteriori error bounds is indeed advantageous, especially for relatively small values of the Planck constant $\ep$, for both time-independent and time-dependent potentials.  
Furthermore,  by appropriately  modifying  the adaptive algorithm we are able to construct efficient approximations, not only to the exact solution $u$, but also to observables \eqref{posden} and \eqref{curden} of problem \eqref{1dsemiclassical}-\eqref{incond}.
%
As already mentioned, for small values of $\ep$ it is very difficult to approximate correctly \eqref{posden} and \eqref{curden}, unless very fine mesh sizes are used.  The problem becomes harder in cases where caustics develop. This is a hard and delicate issue and adaptivity can play an important role to resolve it. 

\subsection{The adaptive algorithm}
%
We consider, modify  and further develop the time-space algorithm of \cite{ALBERT}, introduced first in \cite{NSV}. 
%
We stress out once more that we do not claim that the particular adaptive algorithm is an optimal one. However, it appears to perform well for the problem under consideration and the estimators at hand. In that respect, it is possible to check the efficiency and robustness of the estimators.

%

We next briefly describe  the algorithm we use. To this end, we use $\Gn$ to indicate the spatial grid at $t=t_n$. We also use the notation
$\zeta_0^{\mathrm{I}}:=\|u_0-U^0\|+\eta_{\mathbb{V}^0}(U^0)$. In addition, we can write
\begin{align*}
\E_m^{\TT,0}:=\max_{1\le n\le m}\zeta_n^{\TT,0},  & \qquad \E_m^{\Ss,0}:=\max_{0\le n\le m}\zeta_n^{\Ss,0},\\
\E_m^{\TT,1}\le\max_{1\le n\le m}\zeta_n^{\TT,1}, \quad \E_m^{\Ss,j}\le\max_{1\le n\le m}\zeta_n^{\Ss,j},\, &1\le j\le 3,\quad \E_m^{\mathrm{C}}\le\max_{1\le n\le m}\zeta_n^{\mathrm{C}},\quad \E_m^{\mathrm{D}}\le\max_{1\le n\le m}\zeta_n^{\mathrm{D}},
\end{align*}
where $\zeta_n^{\TT,j},\, 0\le j\le 1,$ $\zeta_n^{\Ss,j},\, 0\le j\le 3$, $\zeta_n^{\mathrm{C}}$ and $\zeta_n^{\mathrm{D}}$ can readily be obtained from \eqref{timerecer}, \eqref{ellipticerr}, and \eqref{timeest}--\eqref{data}. In all  computations the constant $C$, cf., \eqref{timeest}, is taken equal to $1$ and the involved local time integrals are computed using the midpoint quadrature rule. For $1\le n\le m\le N,$ we further define
$$\zeta^{\TT}_n:=\zeta_n^{\TT,0}+\zeta_n^{\TT,1}\ \text{ and }\ \zeta_n^{\Ss}:=\sum_{j=0}^3\zeta_n^{\Ss,j}+\zeta_n^{\mathrm{C}}+\zeta_n^{\mathrm{D}},$$
and let $\mathrm{tol}_{\Ss}$  and $\mathrm{tol}_{\TT}$ denote the tolerances for the local time and space  estimators $\zeta_n^{\TT}$ and $\zeta_n^{\Ss}$, respectively.   The main steps of the adaptive algorithm are summarized schematically in the pseudocode below. 
\vspace{-0.025cm}
%
%

%
\begin{algorithm}[htb!]
 \NoCaptionOfAlgo
\DontPrintSemicolon
\SetAlgoLined 
	{\bf Choose Parameters} : $\mathrm{tol}_{\Ss},\ \mathrm{tol}_{\TT}, \ \delta_{1}\in(0,1), \ \delta_{2}>1, \ \theta_{1}\in(0,1), \ \theta_{2}\in(0,\theta_{1})$\;
	{\bf Initialization}:\;
	\Indp Given an initial grid $\Gc_{0}$ compute $U^0, \zeta_0^{\mathrm{I}},\zeta^{\Ss,0}_0$ \;
		$\Gc_{0} :=$AdaptInitialGrid($U^0, \zeta_0^{\mathrm{I}},\zeta_0^{\Ss,0}$), $t=0$\;

\Indm	
\BlankLine
\While{$t<T$}{
	At $t_{n-1}$ given  $(\Gnm, k_{n-1}, U^{n-1})$ set $\Gn:=\Gnm, \ k_n:=k_{n-1}, t_n:=t_{n-1}+k_n$ \;
	Solve the discrete problem: $(\Gnm, U^{n-1}) \to (\Gn,U^n)  $\;
	Compute Estimators $\zeta_n^{\Ss}, \zeta_{n}^{\TT}$ on $\Gn$ \;
	
	\BlankLine
	
	\While{$\zeta^{\TT}_n > \theta_{1}\mathrm{tol}_{\TT}$}{
		$k_n:=\delta_{1}k_{n-1}$\;
		$t_n:=t_{n-1}+k_n$\;
		Solve the discrete problem: $(\Gnm, U^{n-1}) \to (\Gn,U^n)  $\;
		Compute Estimators $\zeta_n^{\Ss}, \zeta_n^{\TT}$ on $\Gn$  \;	
	}
	
	\BlankLine
	
	\While{$\zeta_n^{\Ss}>\mathrm{tol}_{\Ss}$}{
		Mark Elements for Refinement and/or Coarsening \;
	
		\If{elements are marked}{
			Adapt grid $\Gn$ \;
			Solve the discrete problem: $(\Gnm, U^{n-1}) \to (\Gn,U^n)  $\;
			Compute Estimators $\zeta_n^{\Ss}, \zeta_n^{\TT}$ on $\Gn$ \;
		}
		
		\BlankLine
		
		\While{$\zeta^{\TT}_n> \theta_{1}\mathrm{tol}_{\TT}$}{
			$k_n:=\delta_{1}k_{n-1}$\;
			$t_n:=t_{n-1}+k_n$\;
			Solve the discrete problem: $(\Gnm, U^{n-1}) \to (\Gn,U^n)  $\;
			Compute Estimators $\zeta_n^{\Ss}, \zeta_n^{\TT}$ on $\Gn$  \;	
		}	
	}
	
	\BlankLine
	\If{$\zeta_n^{\TT} \le \theta_{2}\mathrm{tol}_{\TT}$}{
		$k_n:=\delta_{2}k_n$
	}
	$t:=t_n$
}

\caption{\textbf{Time-Space Adaptive Algorithm}}
\end{algorithm}
%
%
%
%
More precisely, the adaptive algorithm starts by advancing the solution and computing the local space and time estimators. Next, before starting the process of adapting the spatial grid,  we perform a time-step refinement, if necessary, based on the local time estimator.  We proceed on the spatial adaptation part of algorithm based on the local space estimator:  we first mark the elements for refinement and/or coarsening and we adapt the grid appropriately, we recompute the solution and the local space and time estimators. Next we perform another time-step refinement, if necessary, based on the local time estimator and then we loop back to the space estimator check. One step of the adaptive algorithm then concludes by a time-step coarsening step.

Reasonable choices for the parameters $\theta_1$ and $\theta_2$ are $\theta_1=0.9$ and $\theta_2=0.2,$ while for $\delta_1$ and $\delta_2$ we take $\delta_1=0.75$ and $\delta_2=1.25$. In all of the experiments, the coarsening percentage is taken to be $10\%$. 
For the mesh refinement percentage, we take $1\%$ for the time-dependent potentials and $5\%$ for all the other cases.
In the sequel, we denote by $\tilde\E_m^{\TT}$ and $\tilde\E_m^{\Ss}$ the following global time and space estimators:
$$\tilde\E_m^{\TT}:=\E_m^{\TT,0}+\max_{1\le n\le m}\zeta_n^{\TT,1}\ \text{ and }\ \tilde\E_m^{\Ss}:=\zeta_0^{\mathrm{I}}+\E_m^{\Ss,0}+\sum_{j=1}^3\max_{1\le n\le m}\zeta_n^{\Ss,j}+\max_{0\le n\le m}\zeta_n^{\mathrm{C}}+\max_{0\le n\le m}\zeta_n^{\mathrm{D}},$$
respectively.
Finally, we define the total degrees of freedom of the adaptive algorithm at the final time $T$ as
\begin{equation*}
\text{Total DoF's}
:=\Big[\sum_{n=1}^Nk_nM_n\Big]+1,
\end{equation*}
where $\Big[\cdot\Big]$ denotes the integral part of a real number and $M_n$ denotes the degrees of freedom at time-level $t_n$.
%
\subsection{Time-independent potentials}
%
For the first set of the numerical experiments with adaptivity, we consider two characteristic cases of time-independent potentials: a constant  potential and a harmonic oscillator. In both cases, we consider the WKB initial data \eqref{incond} with
\begin{equation}
\label{initial2}
\sqrt{n_0(x)}=\mathrm{e}^{-\lambda^2 (x-0.5)^2},\quad  S_0(x)=-\frac{1}{\lambda}\ln\Big(\mathrm{e}^{\lambda(x-0.5)}+\mathrm{e}^{-\lambda(x-0.5)}\Big).
\end{equation}
 In particular we consider:\\
\textsc{Case 1}: $[a,b]\times[0,T]=[0,1]\times[0,0.1]$, $V(x)\equiv 10$, $\ep=10^{-4}$ and $\lambda=30$.\\
\textsc{Case 2}: $[a,b]\times[0,T]=[-1,2]\times[0,0.54]$, $V(x)=\frac{x^2}2$, $\ep=10^{-3}$ and $\lambda=5$.

For the first case, we discretize in space by B-splines of degree $4$. The particular example is interesting, because caustics are formed before the final time $T=0.1$. We first apply the time-space adaptive algorithm. As  expected, we observe adaptivity in space but we  do not observe adaptivity in time. However, in this case, we emphasize  that regardless of the initial choice of the time-step, the adaptive algorithm is able to produce the required time-step for the desirable tolerance of the error. For this example, the given initial time-step was $10^{-3}$ and adapted by the algorithm to $1.34\times 10^{-7}$, which is in agreement with \eqref{econdk} (see also \eqref{exact}). Next, we perform the same experiment, but using uniform partitions and the same degrees of freedom as in the adaptive algorithm. The estimators  are plotted in Figure~\ref{tmind1} in logarithmic scale, for both the adaptive algorithm and the uniform partition. We observe that the total estimator computed with the uniform partition is two orders of magnitude larger compared to the corresponding one using adaptivity. Since the time-step, after its initial adaptation remains fixed, the evolution of the total time estimator is the same for both the adaptive algorithm and the corresponding uniform partition. The total space estimator dominates the time estimator in the uniform partition, and this is the reason that $\tilde\E_m^{\Ss}$ coincides with the total estimator on the right plot of Figure~\ref{tmind1}.

\begin{figure}[htb!]
%
{
%
\hspace{-1.2cm}
\includegraphics
[height=6.5cm, width=9cm]{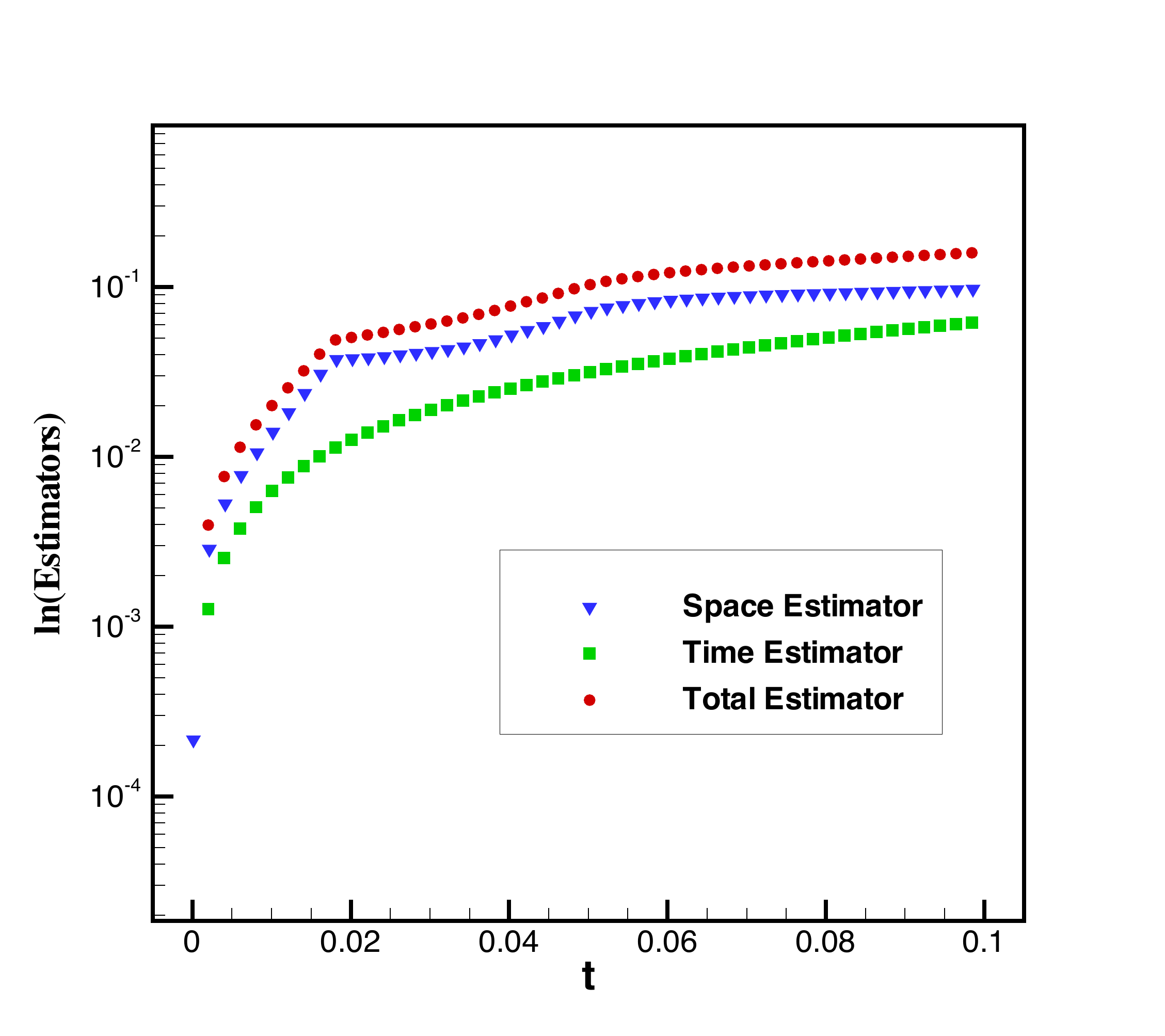}}
 {\hspace{-0.3cm}
\includegraphics
[height=6.5cm, width=9cm]{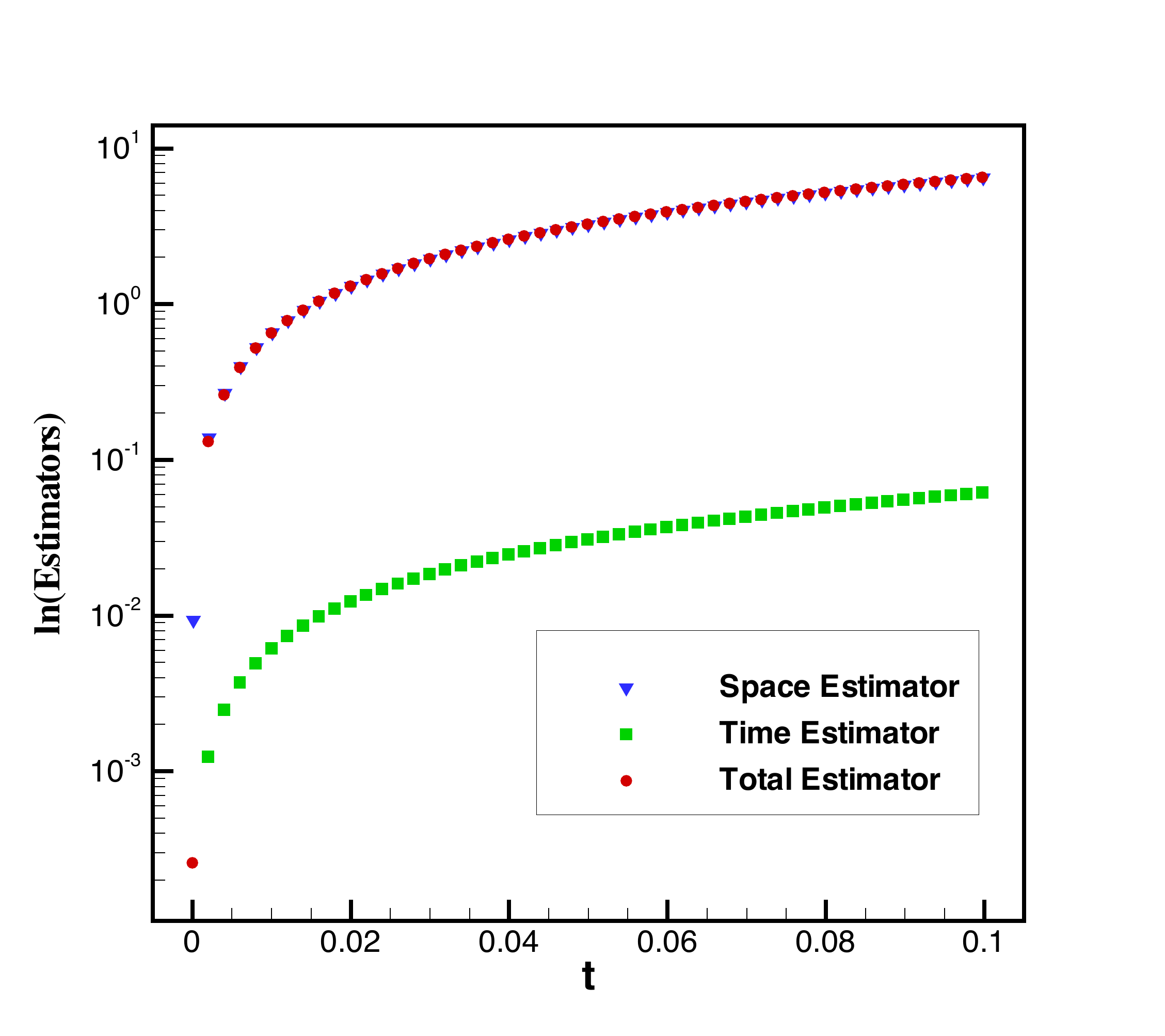}
%
%
%
\caption{Evolution of $\tilde\E_m^{\TT},\tilde\E_m^{\Ss}$ and total estimator  in logarithmic scale, using adaptivity (left) and uniform partitions with the same degrees of freedom (right) for the case $V(x)\equiv 10$. \label{tmind1}}
}
\end{figure}

For the second case, we discretize in space by cubic B-splines and we apply again the adaptive algorithm. As initial time-step we take again $10^{-3}$ and adapted to $7.5\times 10^{-5}$, which is larger than the expected one. This is because both \eqref{econdk}, \eqref{exact} are sufficient, but not always necessary conditions for convergence for problem \eqref{1dsemiclassical}--\eqref{incond}. The fact that the adaptive algorithm is able to compute the correct time-step size can be considered an advantage, since for the linear Schr\"odinger equation in the semiclassical regime such a choice is crucial and delicate from the  point of view of accuracy and stability of the approximations, as well as from the point of view of computational cost.  In each time-slot, the mesh size varies from $7.32\times 10^{-6}$ to $2.4\times 10^{-1}$, which proves that  conditions \eqref{econdh} can be relaxed through adaptivity in space; very fine mesh sizes are needed only in certain areas of $[-1,2]$.
 In Figure~\ref{tmind2}, we plot the evolution of  time, space and total estimators  in logarithmic scale and the position density at the beginning and at the final time $T=0.54$. As we observe from the plot of $|U|^2$  at $T=0.54,$ caustics are formed for this problem as well. The a priori knowledge of such information requires very technical and tedious  calculations. However this information can be obtained through the a posteriori error analysis and adaptivity.

\begin{figure}[htb!]
%
{\hspace{-1.5cm}
\includegraphics[scale=0.38]{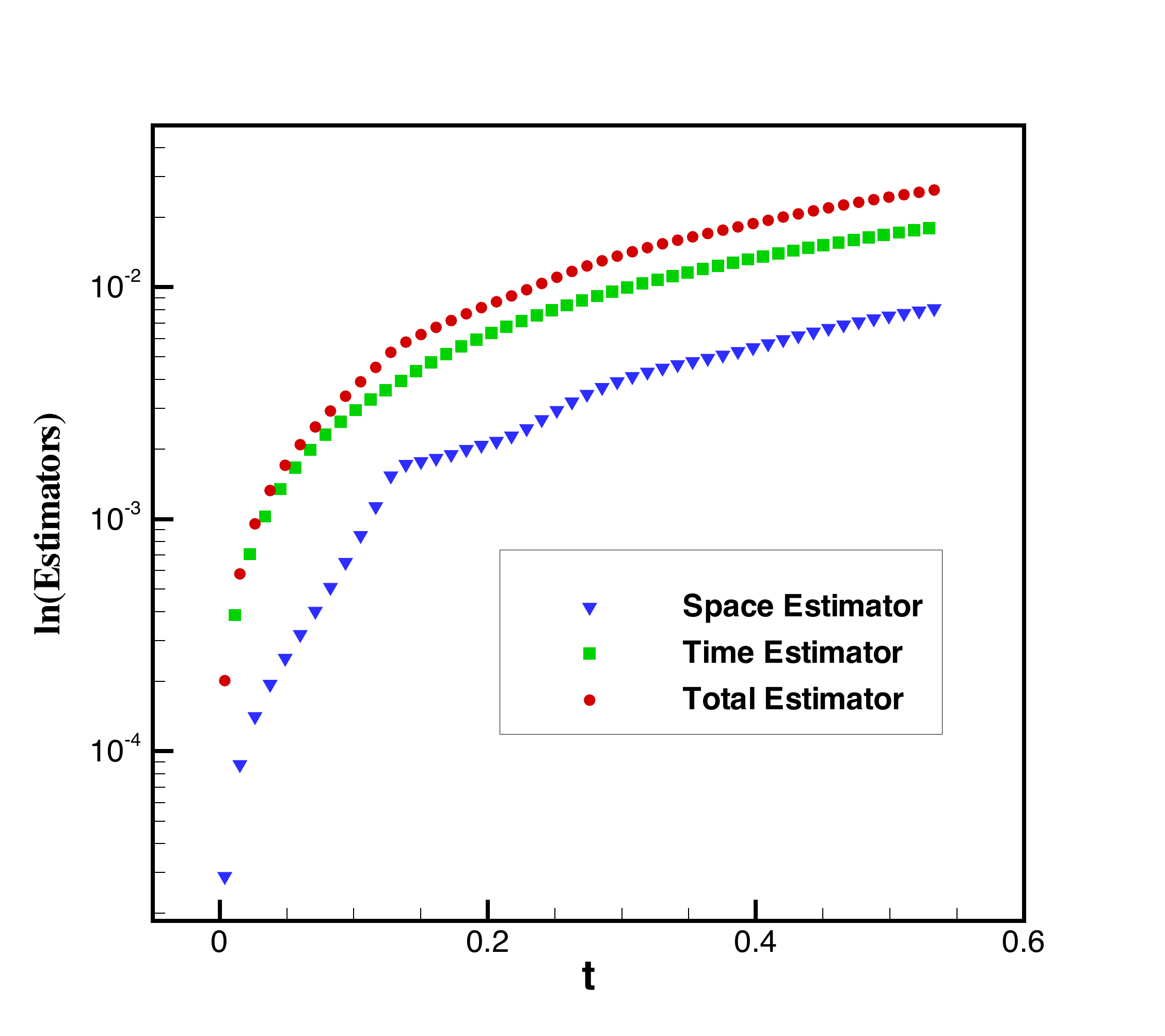}}
 {\hspace{-0.3cm}
\includegraphics[scale=0.32]{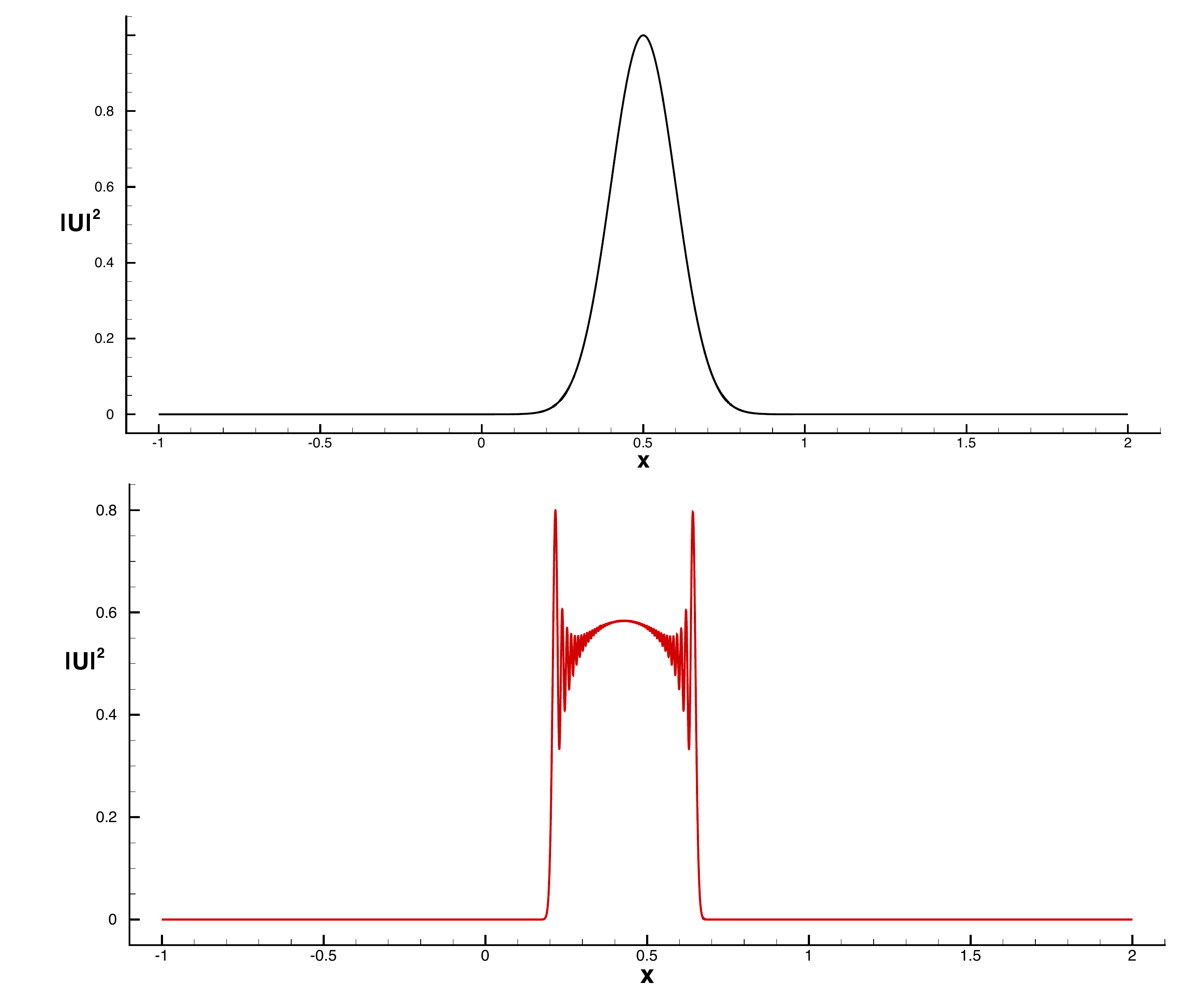} \\
%
%
%
\caption{Evolution of $\tilde\E_m^{\TT},\tilde\E_m^{\Ss}$ and  total estimator  in logarithmic scale, using the adaptive algorithm (left) and position density for $t_0=0$ (upper right) and at the final time $T=0.54$ (lower right) for the harmonic oscillator. \label{tmind2}}}
\end{figure}
%
\subsection{Time-dependent potentials}
%
The simplest time-dependent potentials are of the form $V(x,t)=\frac{x^2}2\omega(t)$, where $\omega$ denotes a smooth function in time; \cite{CGMPS, Leach}. To check the efficiency of the estimators during time adaptivity, we choose two time-dependent potentials of this form, which change relatively fast with time. 

For the first experiment, we solve in $[a,b]\times[0,T]=[1,2]\times[0,3]$ and we take $V(x,t)=\displaystyle\frac{x^2}2\cdot\frac1{10t+0.05} $ and $\ep=10^{-2}.$ As $\sqrt{n_0(x)}$ we take the one in \eqref{initial2}, while we choose $S_0(x)=5(x^2-x)$, and we define the initial condition through \eqref{incond}. We use quadratic B-splines and we apply the time-space adaptive algorithm. In Figure~\ref{tmd1}, we plot the evolution of the estimators  in a logarithmic scale, as well as the variation of the time-steps $k_n$ during time adaptivity. The considered potential changes faster with  time in the subinterval $[0,1]$, compared to $[1,3]$, and this is the reason the required time-step is considerably smaller in this area.  For this experiment, in each time-slot, the mesh size varies from $1.17\times 10^{-4}$ to $1.2\times 10^{-1}$. 
\begin{figure}[htb!] 
%
{\hspace{-1.5cm}
\includegraphics[scale=0.38]{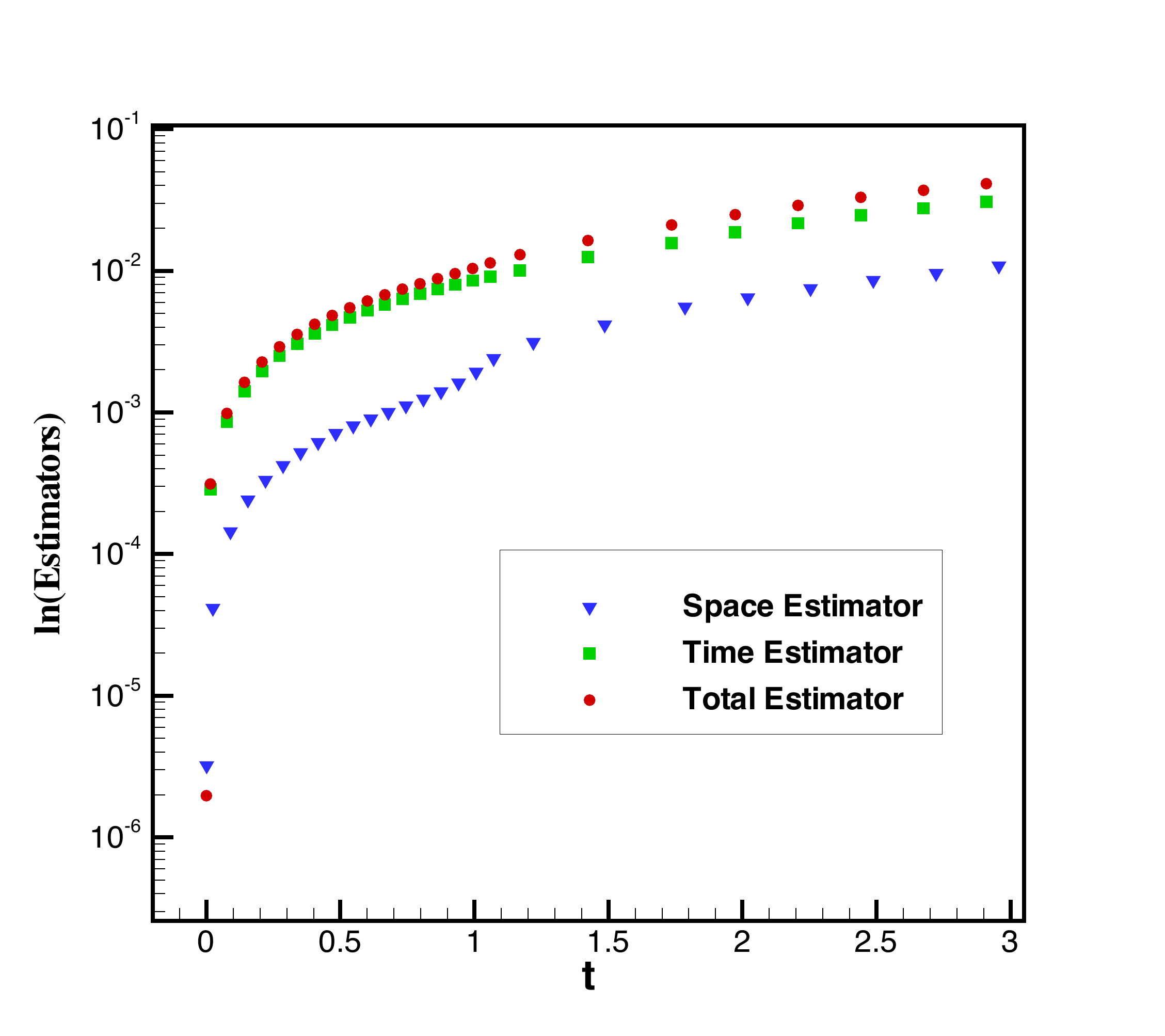}}
 {\hspace{0.3cm}
\includegraphics[scale=0.38]{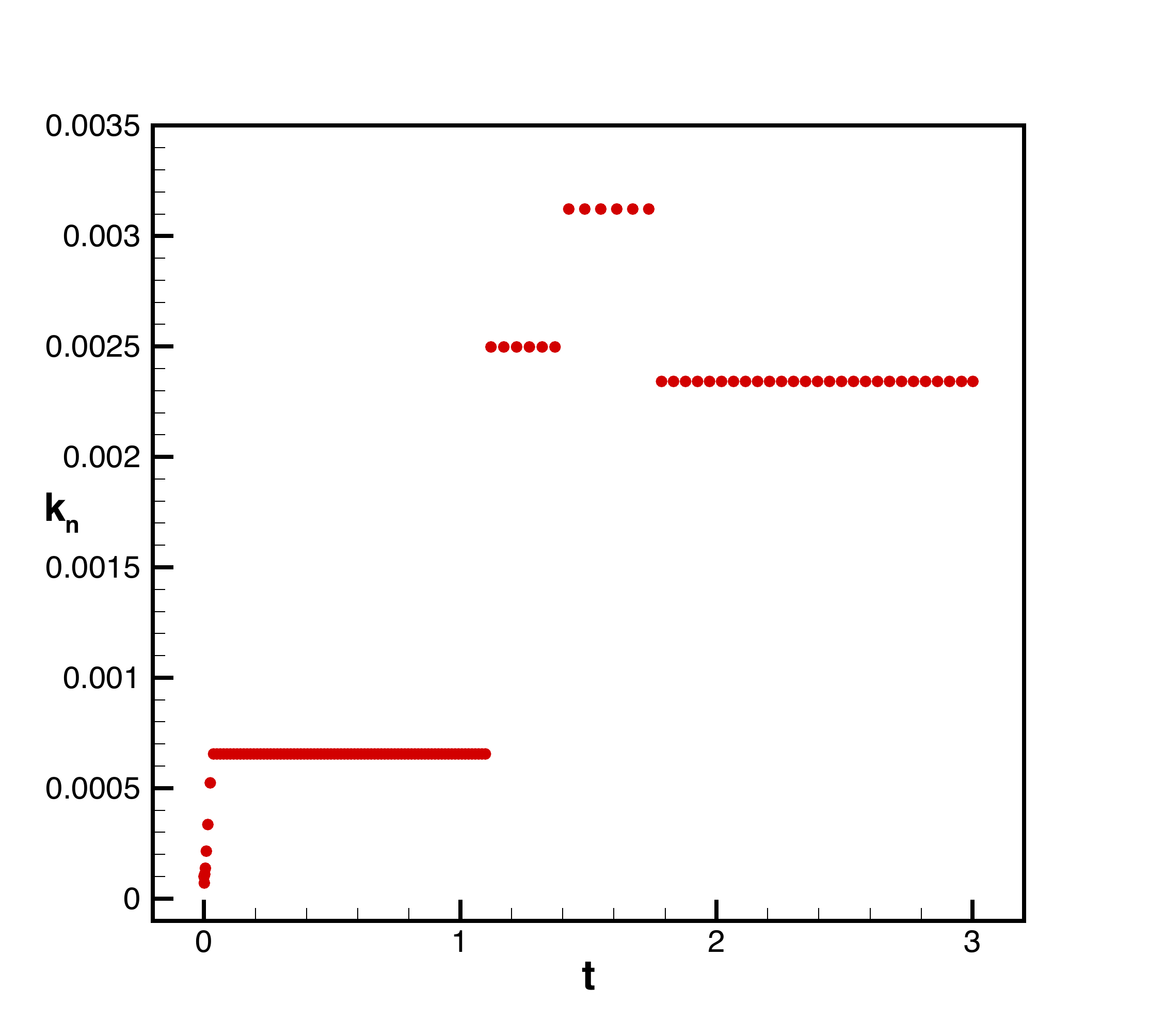} 
%
%
%
\caption{Evolution of $\tilde\E_m^{\TT}, \tilde\E_m^{\Ss}$ and the total estimator in logarithmic scale (left), and variation of time-steps $k_n$ during adaptivity (right), for $V(x,t)=\frac{x}2\cdot\frac{1}{10t+0.05}.$\label{tmd1}}
}
\end{figure}

For the second experiment, we solve in $[a,b]\times[0,T]=[-1,2]\times[0,1]$ and we take $V(x,t)=\displaystyle\frac {x^2}2\cdot\frac {1}{t+0.05}$ and $\ep=2.5\times 10^{-3}$. We take the same initial condition as in the previous experiment and cubic B-splines. In Figure~\ref{tmd2},  we plot the evolution of the estimators in logarithmic scale and the variation in time of the time-steps and of the degrees of freedom. This is a characteristic example where intensive adaptivity is observed, in both time and space. 
%
\begin{figure}[htb!]
%
{\hspace{-1.5cm}
\includegraphics[scale=0.38]{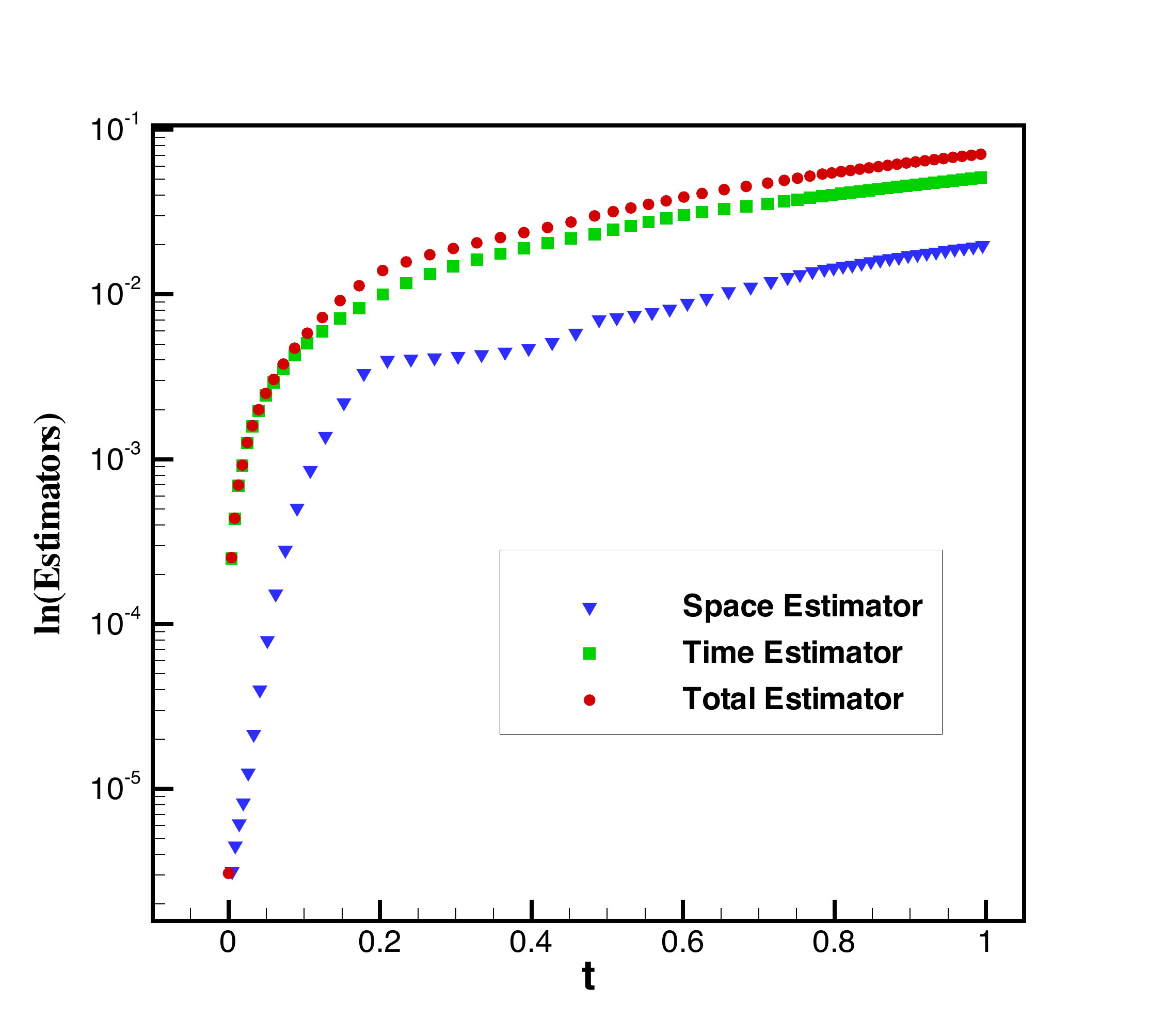}}
 {\hspace{-0.7cm}
\includegraphics[scale=0.35]{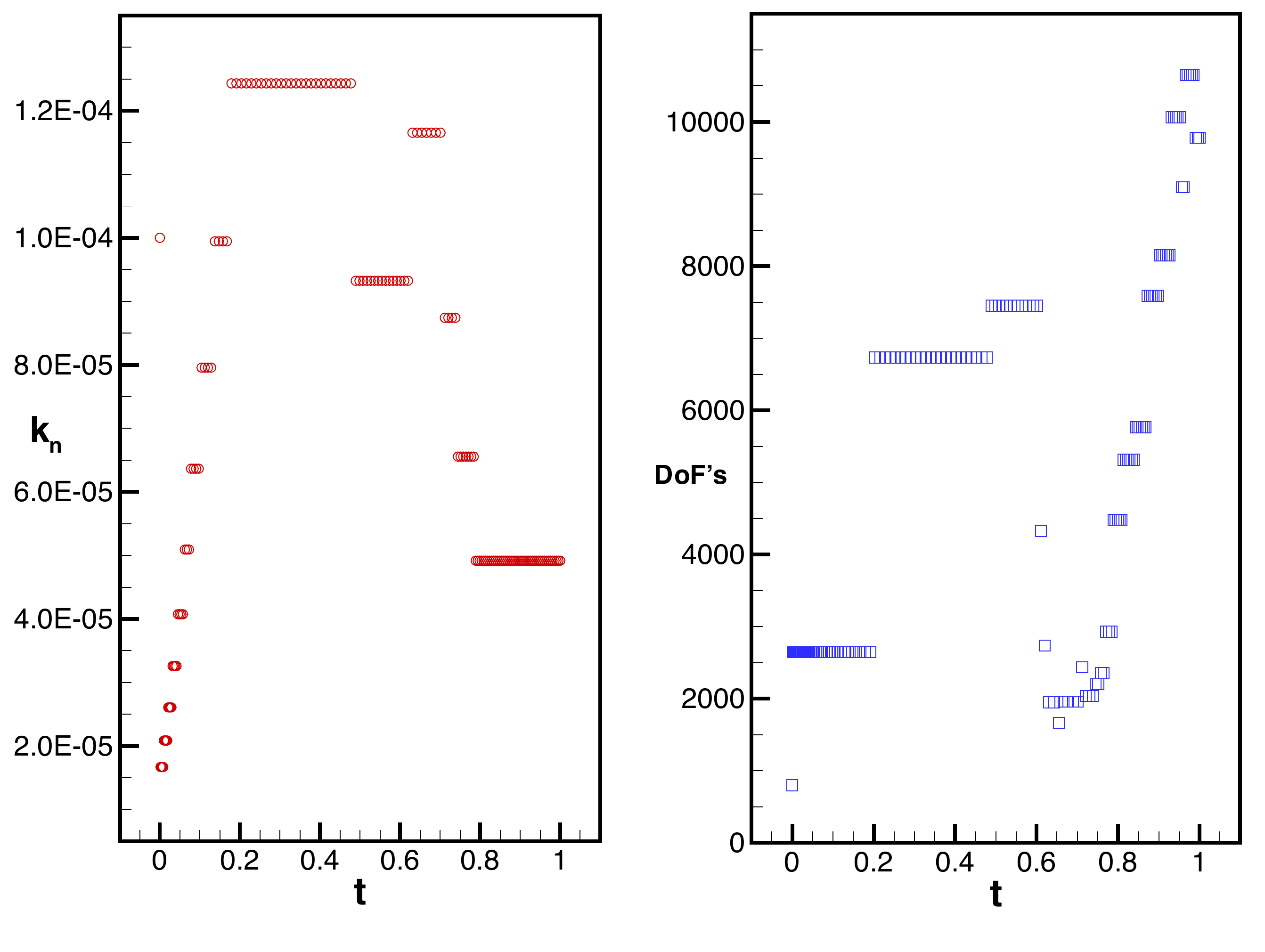} 
%
%
%
\caption{Evolution of estimators  in logarithmic scale (left) and variation of the time-steps $k_n$ and the DoF's versus $t$ (right) during  adaptivity for  $V(x,t)=\frac {x^2}2\cdot\frac {1}{t+0.05}$. \label{tmd2}}
}
\end{figure}


\begin{figure}[htb!]
%
{\hspace{-1.5cm}
\includegraphics[scale=0.44]{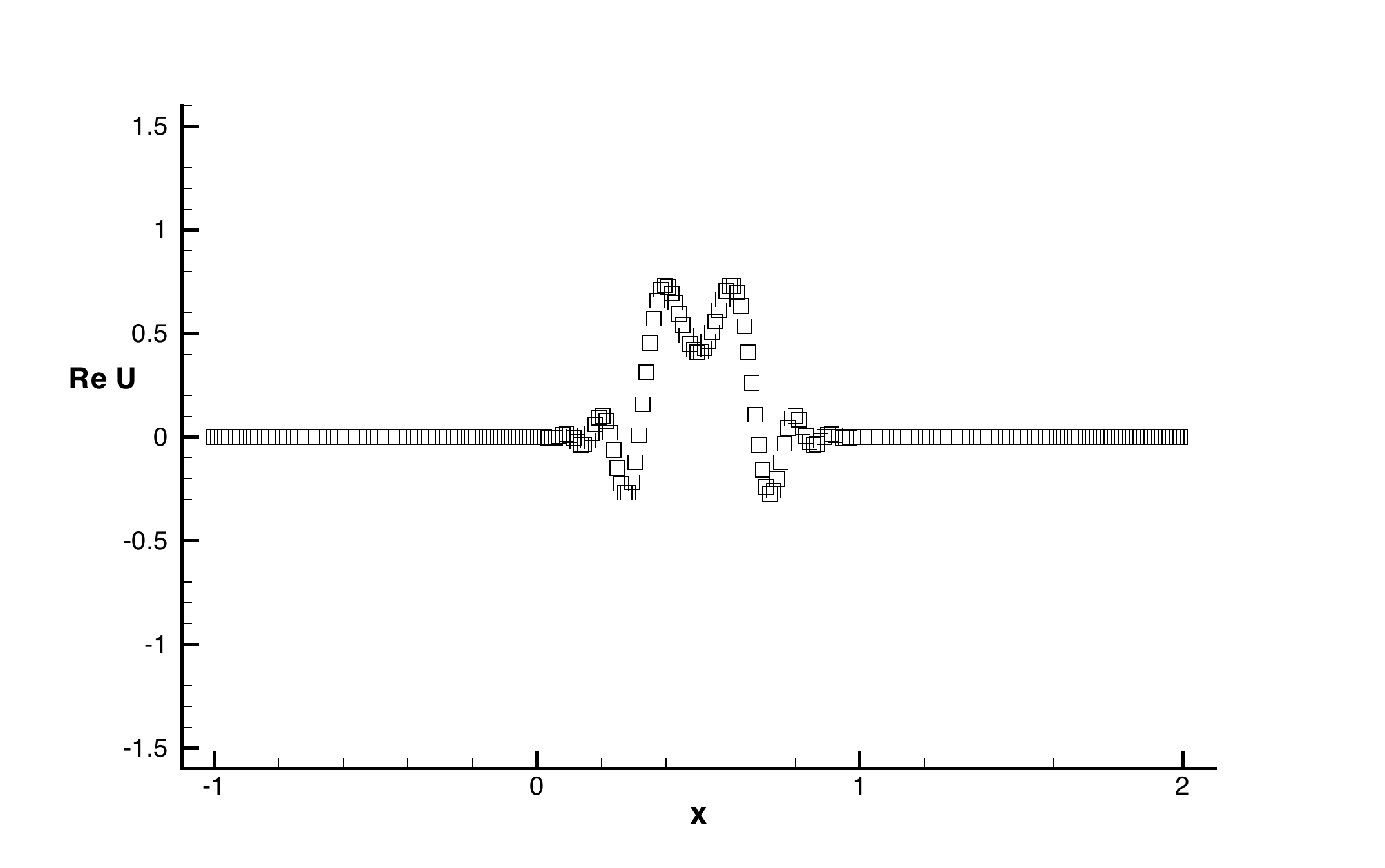}}
 {\hspace{-2.7cm}
\includegraphics[scale=0.44]{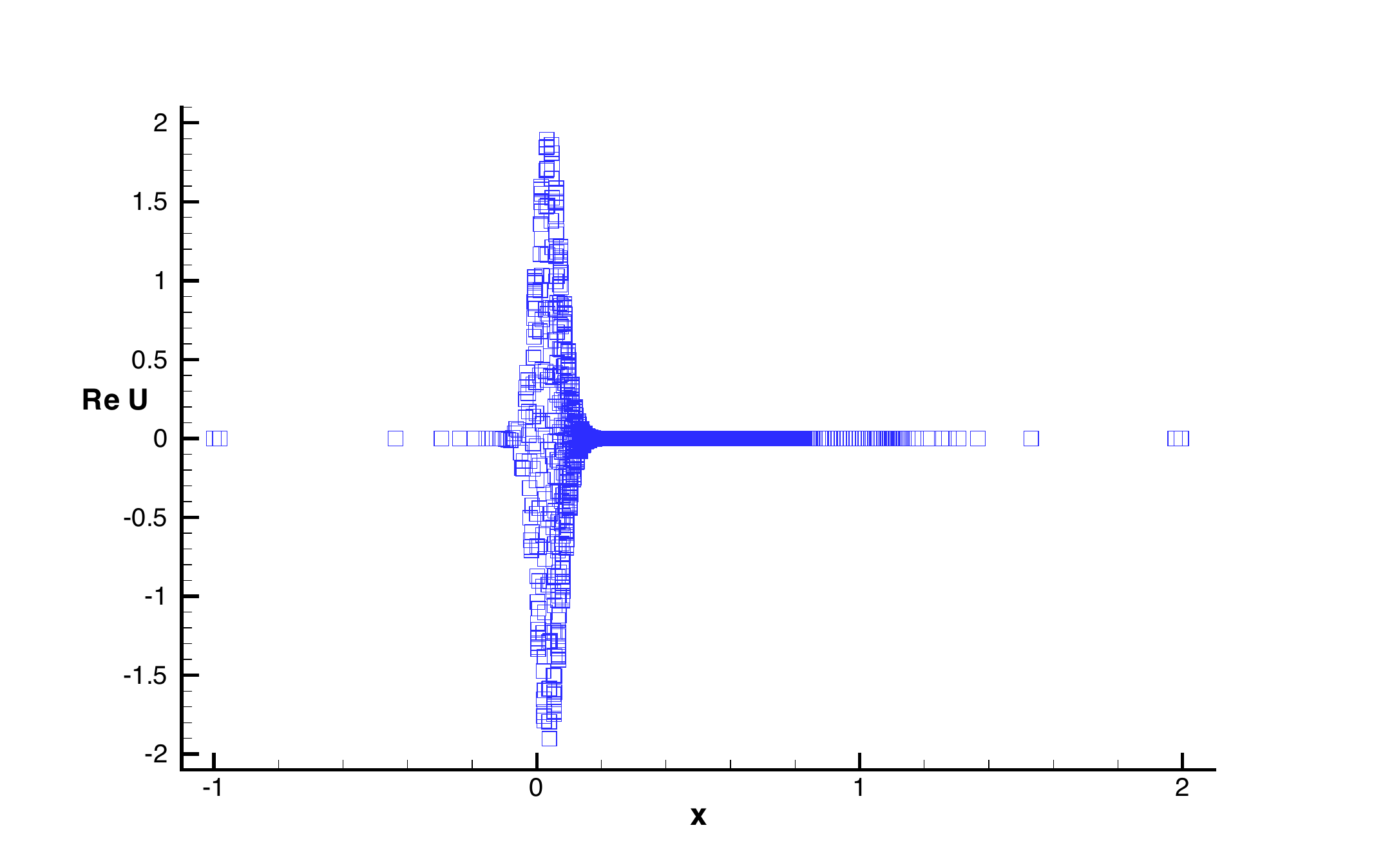}} \\

%
%
%
{\hspace{-1.5cm}
\includegraphics[scale=0.44]{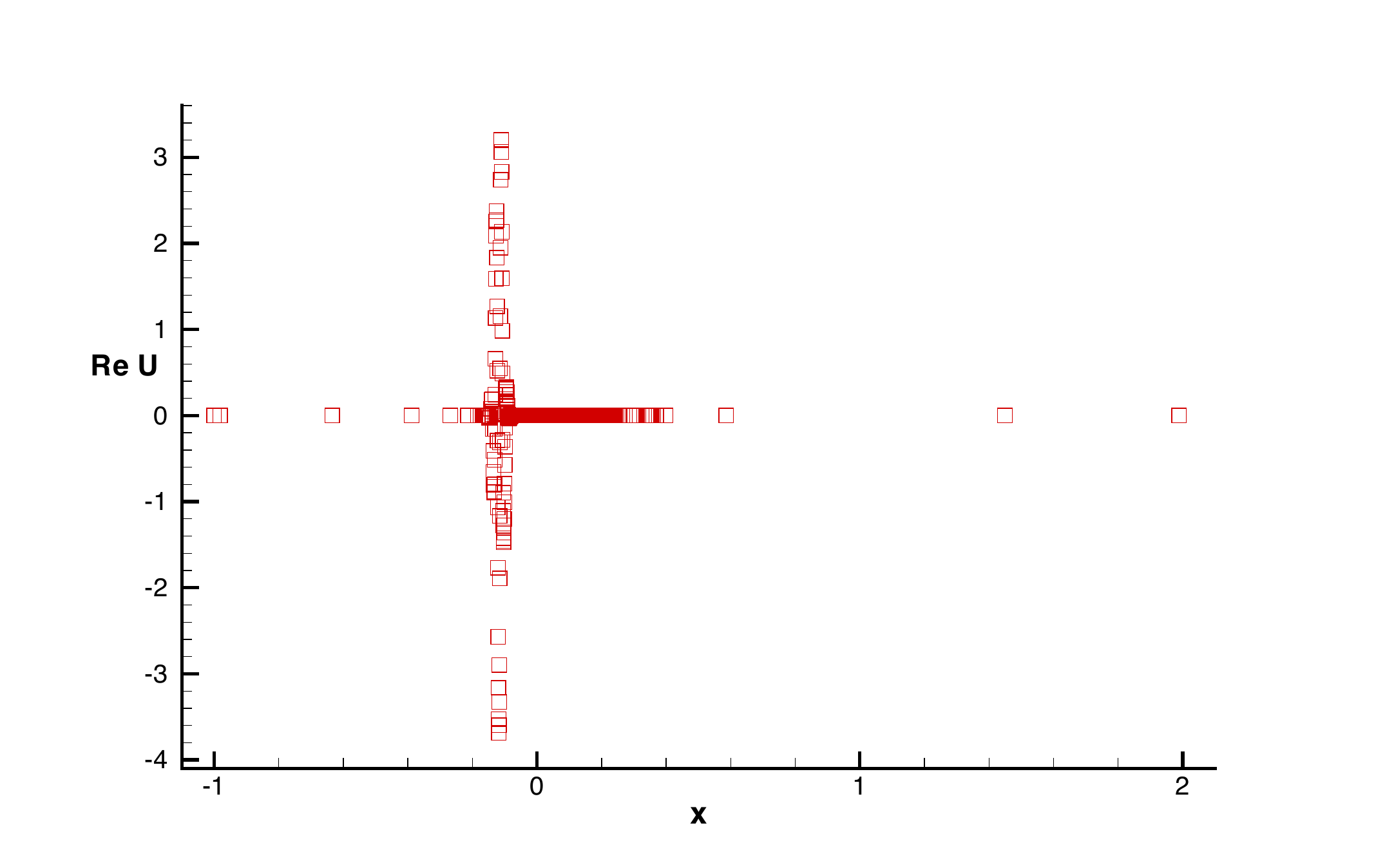}}
 {\hspace{-2.7cm}
\includegraphics[scale=0.44]{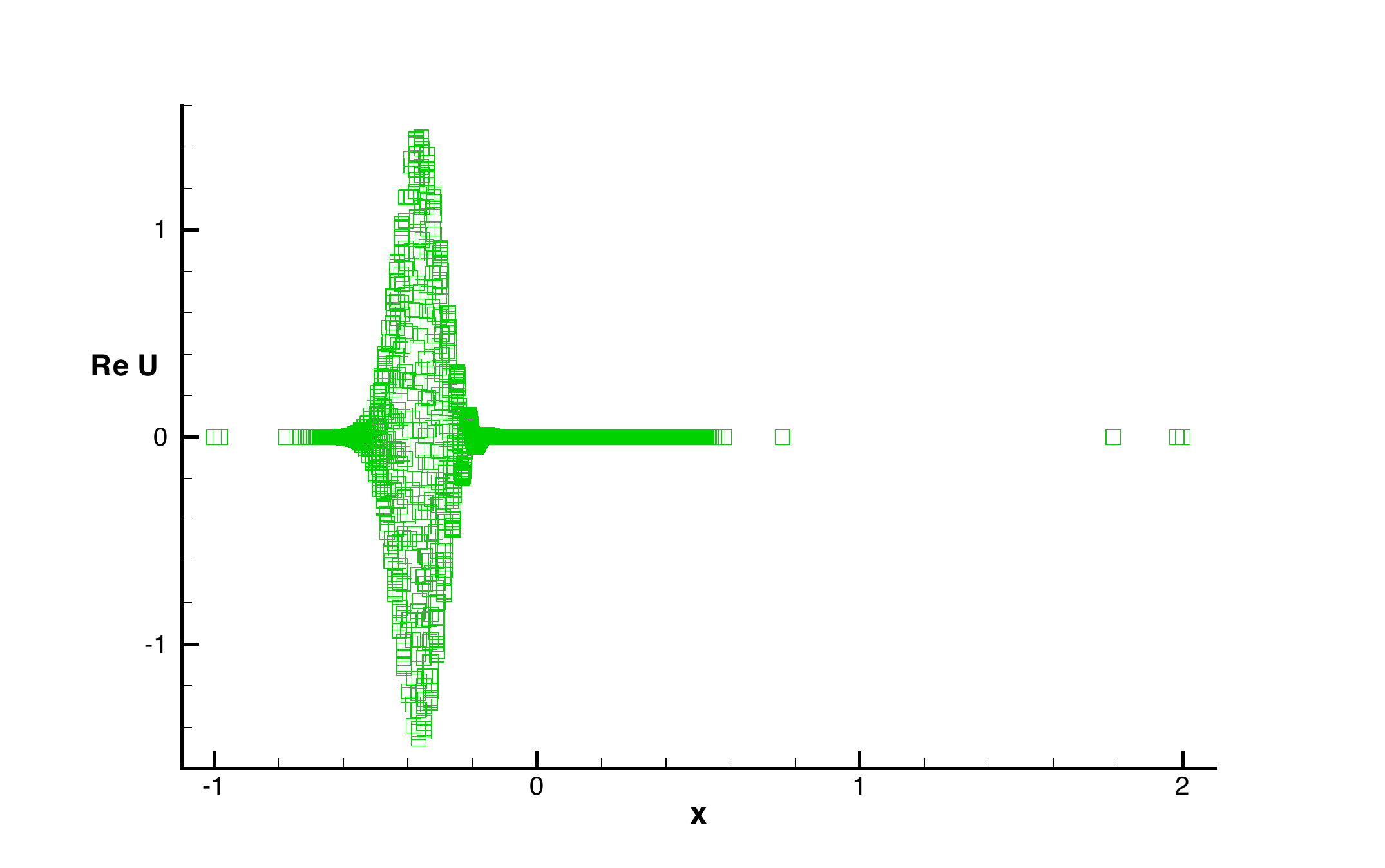}
\caption{Snapshots of the real part of the approximation and distribution of the grid points for the case $V(x,t)=\frac {x^2}2\cdot\frac {1}{t+0.05}$.\label{tmd3}}
}
\end{figure}

In Figure~\ref{tmd3}, we plot four snapshots: at the beginning, at the final time and in two intermediate times. From the plots we can also see the distribution of the grid points. At $t=0$, we start with uniform partition. For the remaining three snapshots, we observe that the points are dense close to rough changes of the approximation. Especially, in the third snapshot (left plot from below), almost all the points are concentrated close to the peak, while in areas where the solution doesn't change much, the grid is very sparse. This is an indicator of the robustness of  the  adaptive algorithm which  can provide reliable results with considerably less computational cost, compared to uniform grids.
%


\subsection{Approximation of the observables}\label{observables}
We focus next on the approximation of the observables \eqref{posden}, \eqref{curden}. In particular, we propose a modification of the adaptive algorithm and we verify numerically the advantages of the modified algorithm for the approximation of the observables, in terms of computational cost and accuracy.

For CNFE schemes, it is well known that the restrictive conditions between mesh sizes and the parameter $\ep$ needed for the efficient error control of the exact solution of \eqref{1dsemiclassical}--\eqref{incond} can be relaxed for the error control of the corresponding observables. More precisely, as it was proven in \cite{MPP, MPPS}, a sufficient and necessary condition for approximating well the observables is $\frac h\ep+\frac k\ep\to 0$. Moreover, the $L^\infty(L^2)$ approximation of the exact solution implies the $L^\infty$ approximation of observables' mean value; \cite{BJM}. In view of all these, we modify the adaptive algorithm as follows: We multiply all estimators but $\E_m^{\Ss,0}$ and $\E_m^{\TT,0}$ by $\ep$, so that the new estimators will converge provided that $\frac h\ep+\frac k\ep\to 0$,  cf., \eqref{econdh},\eqref{econdk}. Then, we apply the same algorithm, but with respect to these new estimators. 

We then perform various numerical experiments to verify whether this partially  heuristic idea can be advantageous to the approximation of the observables. More precisely, we consider the constant potential $V(x)\equiv 10$ and the WKB initial condition \eqref{incond} with $\sqrt{n_0}$ and $S_0$ as in \eqref{initial2}. We perform the experiments with adaptivity only in space.
%
For the first two tests, we take $[a,b]\times[0,T]=[-1,2]\times[0,0.54]$, $\lambda=5$ and $\ep=10^{-3}$ or $\ep=2.5\times10^{-4}$. Recall that the particular example, considered earlier in \cite{BJM}, is interesting because caustics are formed before the final time. For the case $\ep=10^{-3}$, we take $k=10^{-5}$ and discretize by quadratic  B-splines, whereas for $\ep=2.5\times 10^{-4}$, we take $k=3\times 10^{-6}$ and discretize by B-splines of degree $4$. In Figures~\ref{obs11}, \ref{obs21}, we plot the position density  using the adaptive algorithm (left plot) and uniform grid with the same degrees of freedom (right plot). The solid line corresponds to the semiclassical limit of the exact observable which is possible to compute for constant potentials.  The dotted lines correspond to the approximate observable. As we observe from these plots, the approximation using adaptivity is very good, while the one using uniform partition misses completely the angles and peaks. Similar comments can be made for the plots referring to the current density. These plots can be viewed in Figures~\ref{obs12} and \ref{obs22} for $\ep=10^{-3}$ and $2.5\times 10^{-4}$, respectively. In the plots concerning the 
approximations with space adaptivity, we also see the distribution of the grid points. It is remarkable that most of the points are concentrated close to the angles and peaks. On the contrary, very few points are placed around the endpoints, where the observables remain constant. The total number of degrees of freedom in adaptivity corresponds to $1458$ DoF's in each time-slot for $\ep=10^{-3}$ and to $3186$ for the case $\ep=2.5\times 10^{-4}$. The required degrees of freedom in each time-slot with uniform partition are more than $3000$ for $\ep=10^{-3}$ and more than $12 000$ for $\ep=2.5\times 10^{-4}$.
%
 \vspace{-0.05cm} 
\begin{figure}[htb!] 
%
{\hspace{-1.5cm}
\includegraphics[scale=0.36]{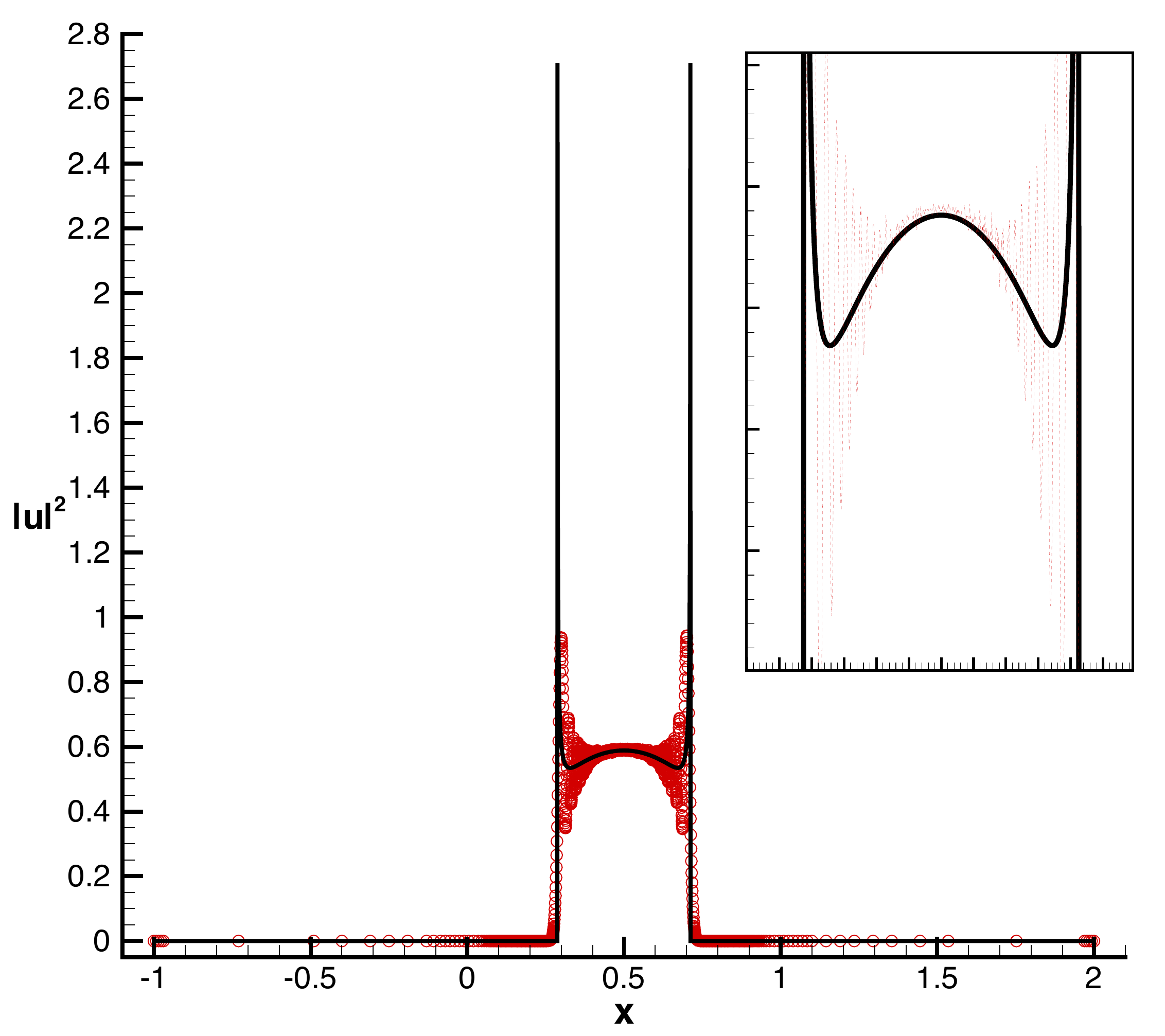}}
 {\hspace{0.5cm}
\includegraphics[scale=0.36]{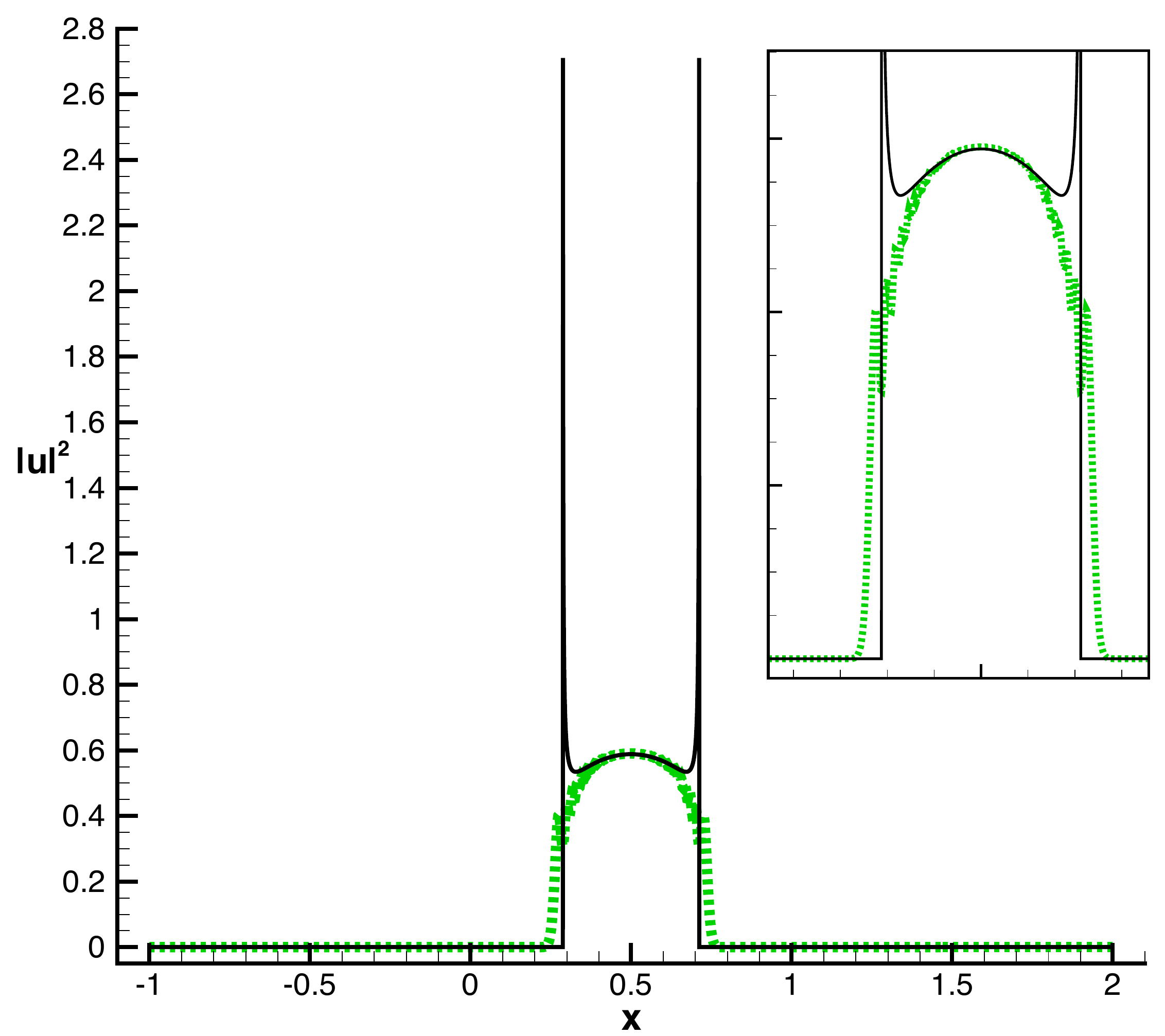} 
%
%
%
\caption{Position density  at the final time $T=0.54$ in case $\ep=10^{-3}.$ Solid line represents the semiclassical limit of the  exact observable, while dotted line represents the approximation using adaptivity (left) and uniform partition with the same DoF's (right).\label{obs11}}
}
\end{figure}
\begin{figure}[htb!]
%
{\hspace{-1.5cm}
\includegraphics[scale=0.36]{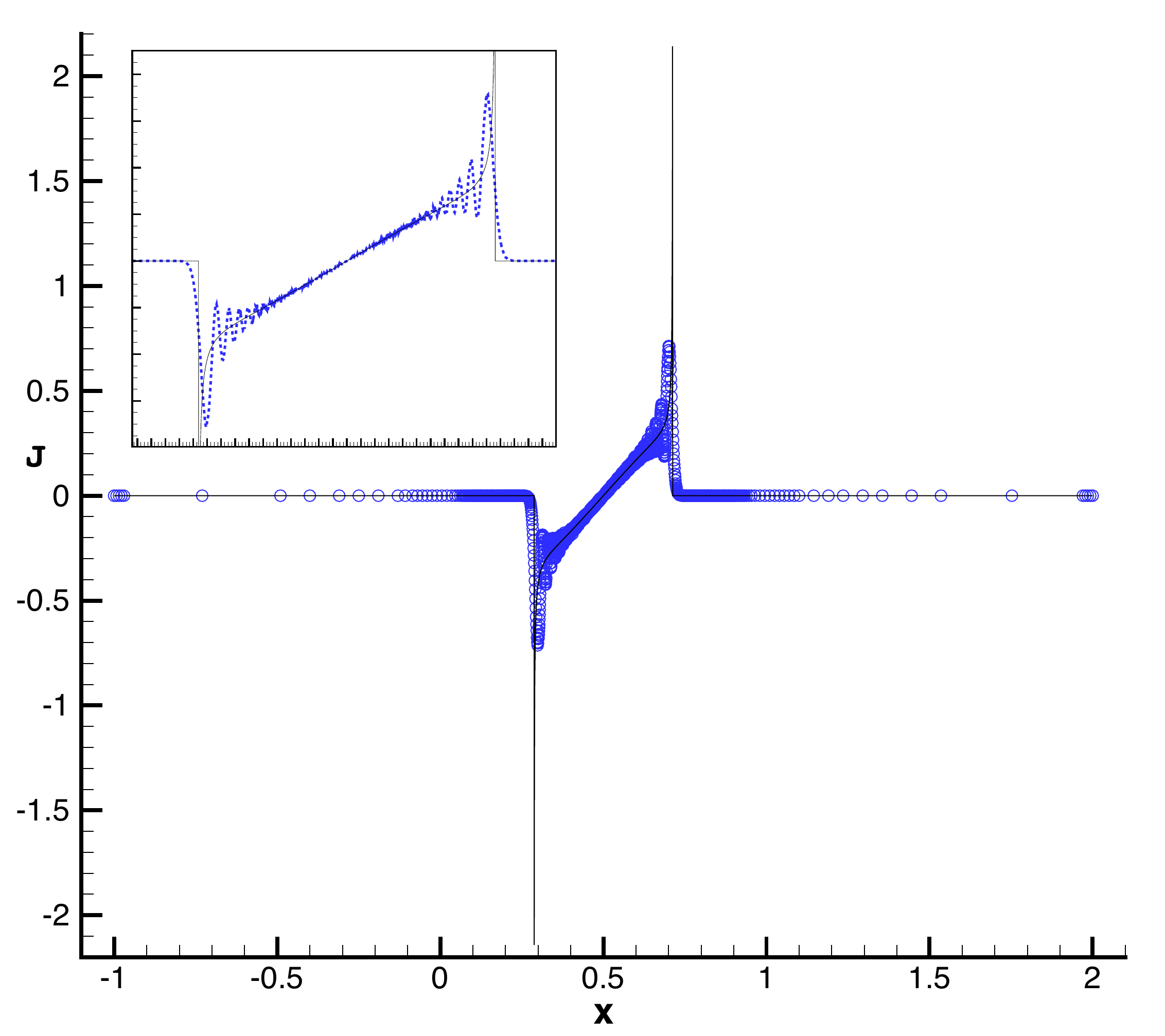}}
 {\hspace{0.5cm}
\includegraphics[scale=0.36]{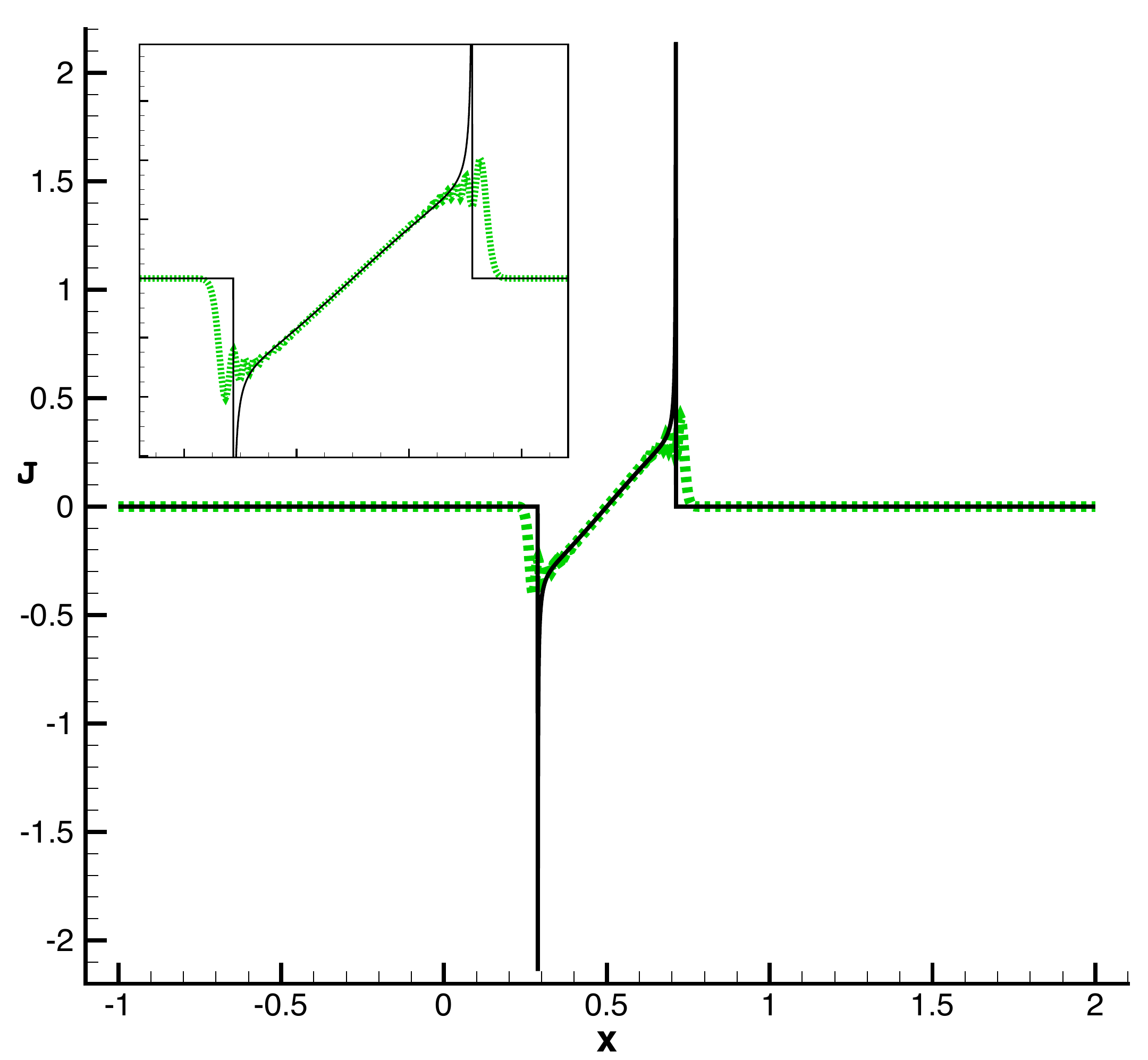} 
%
%
%
\caption{Current density at the final time $T=0.54$ in case $\ep=10^{-3}.$ Solid line represents the semiclassical limit of the exact observable, while dotted line represents the approximation using adaptivity (left) and uniform partition with the same DoF's (right).\label{obs12}}
}
\end{figure}
%

\begin{figure}[htb!]
%
{\hspace{-1.5cm}
\includegraphics[scale=0.36]{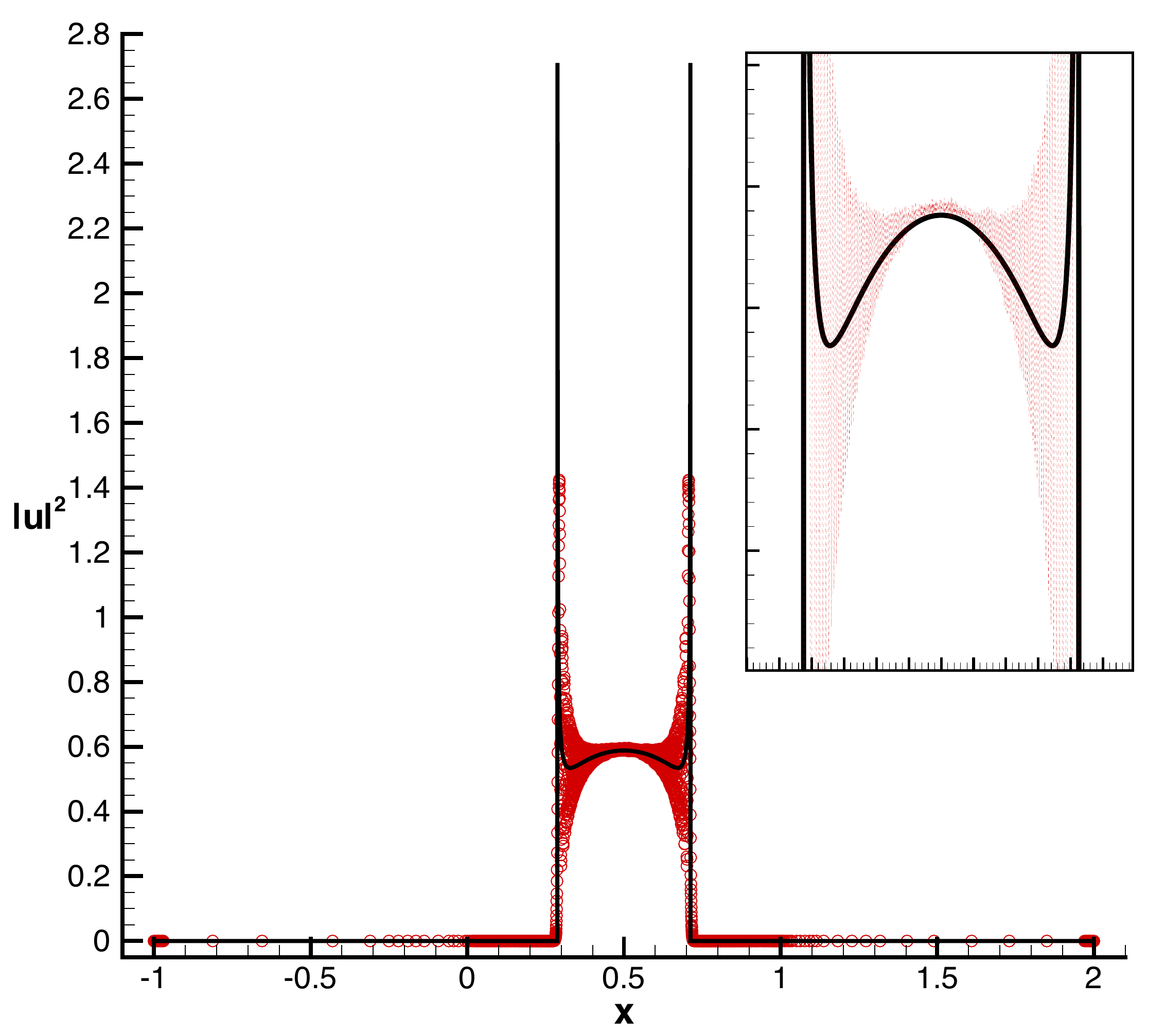}}
 {\hspace{0.5cm}
\includegraphics[scale=0.36]{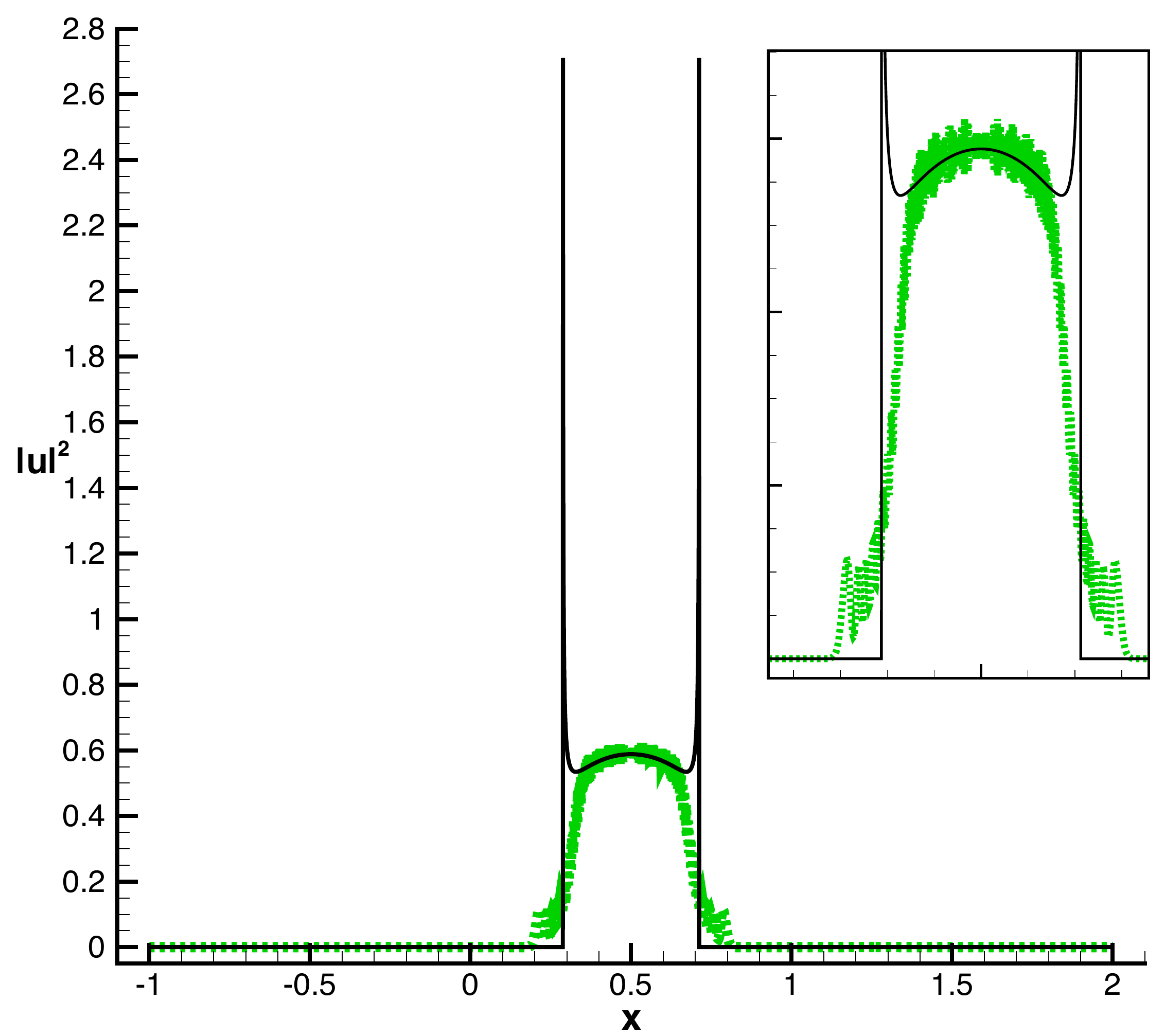} 
%
%
%
\caption{Position density at the final time $T=0.54$ in case $\ep=2.5\times 10^{-4}.$ Solid line represents the semiclassical limit of the exact observable, while dotted line represents the approximation using adaptivity (left) and uniform partition with the same DoF's (right).\label{obs21}}
}
\end{figure}
\begin{figure}[htb!]
%
{\hspace{-1.5cm}
\includegraphics[scale=0.36]{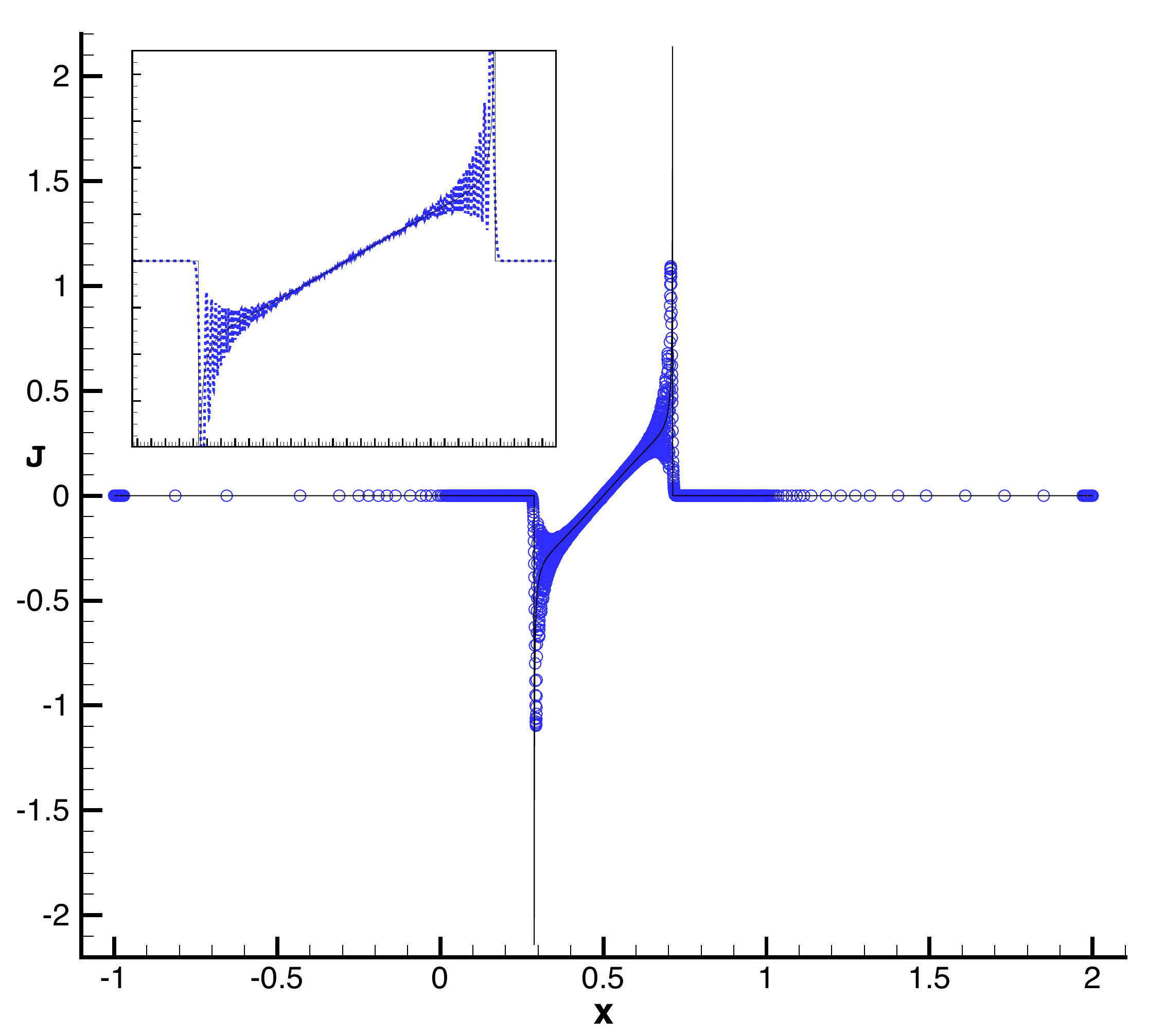}}
 {\hspace{0.5cm}
\includegraphics[scale=0.36]{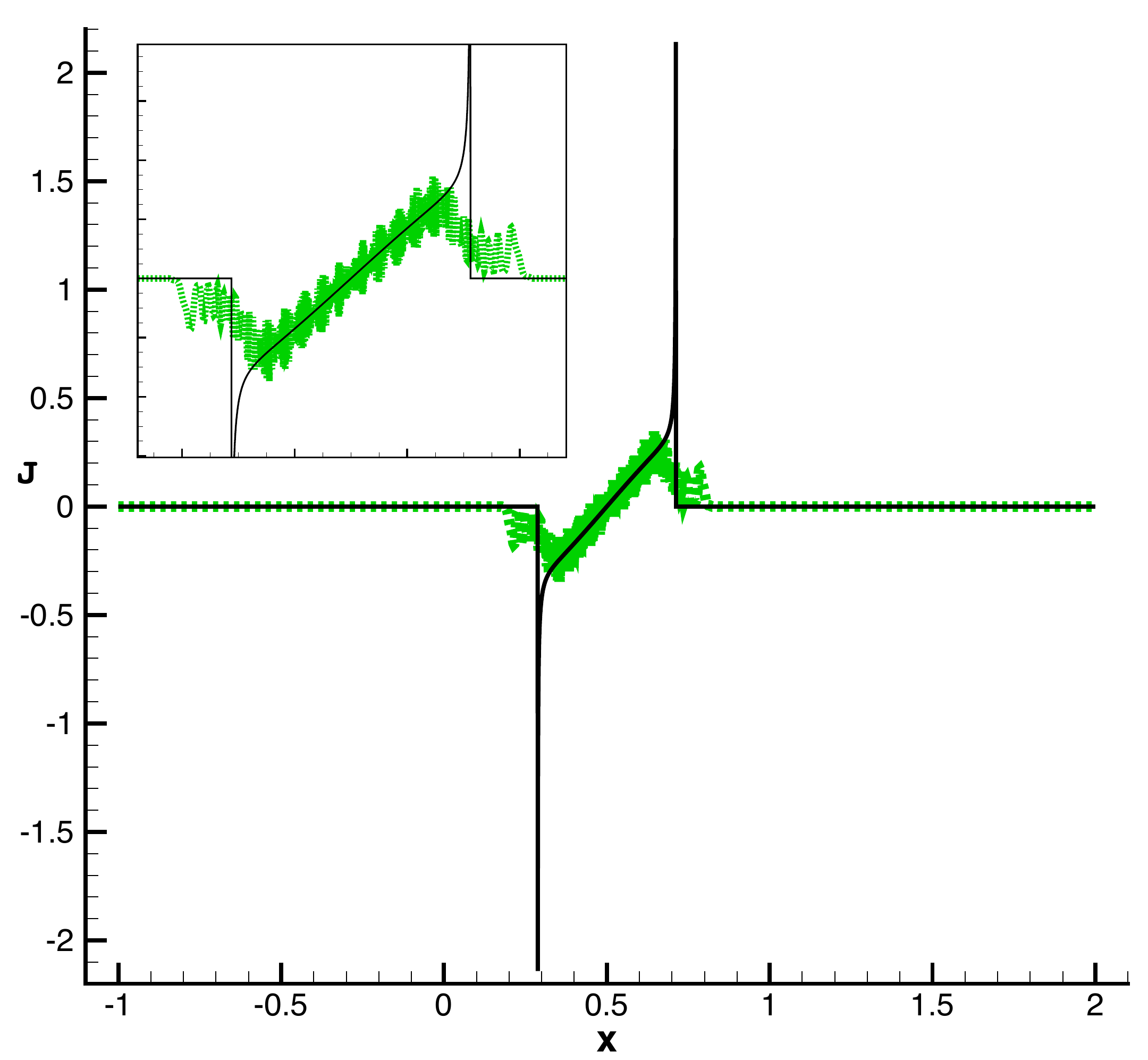} 
%
%
%
\caption{Current  density at the final time $T=0.54$ in case $\ep=2.5\times10^{-4}.$ Solid line represents the semiclassical limit of the exact observable, while dotted line represents the approximation using adaptivity (left) and uniform partition with the same DoF's (right).\label{obs22}}
}
\end{figure}
\begin{figure}[htb!]\label{obs41}
%
{\hspace{-1.5cm}
\includegraphics[scale=0.36]{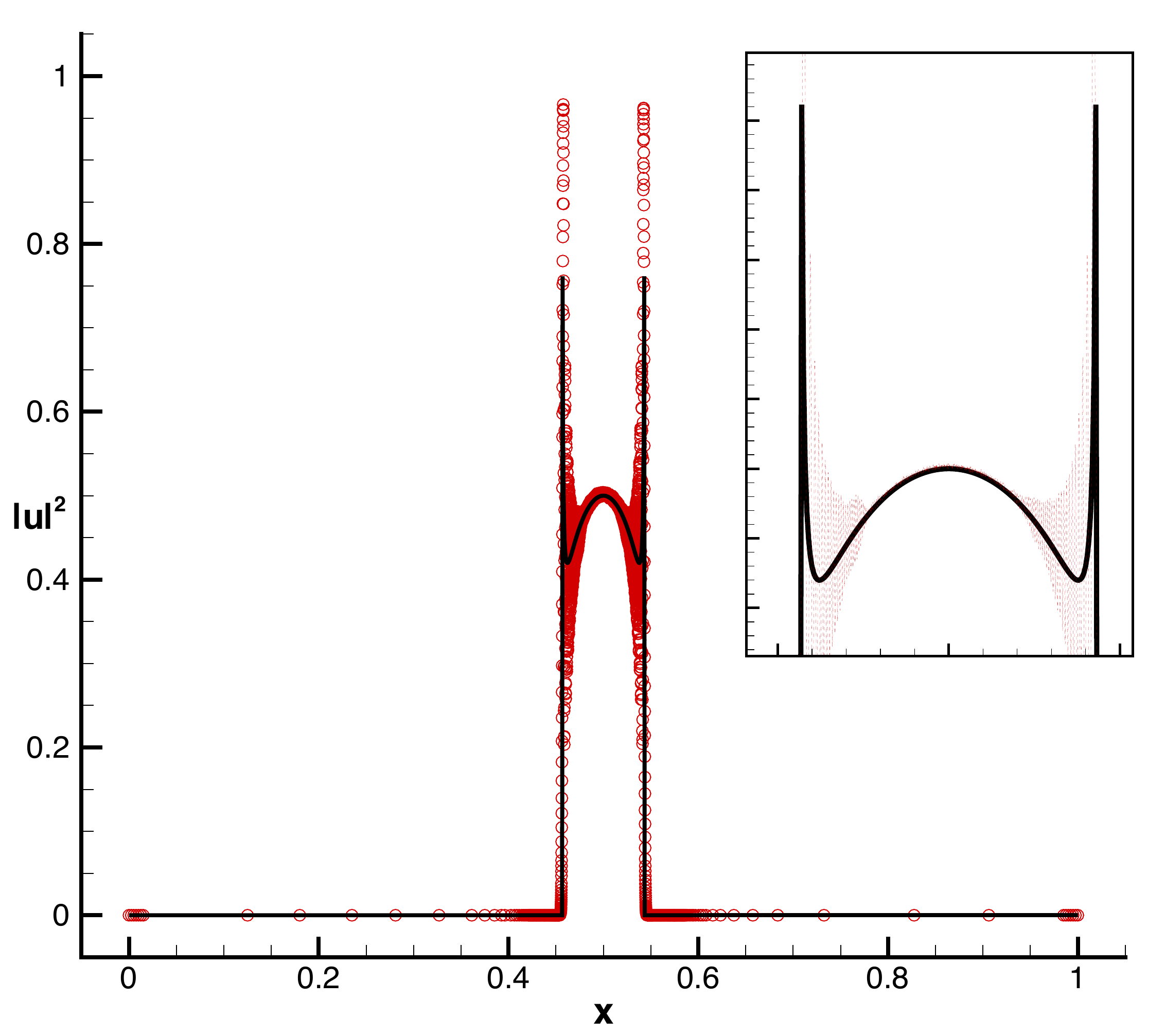}}
 {\hspace{0.5cm}
\includegraphics[scale=0.36]{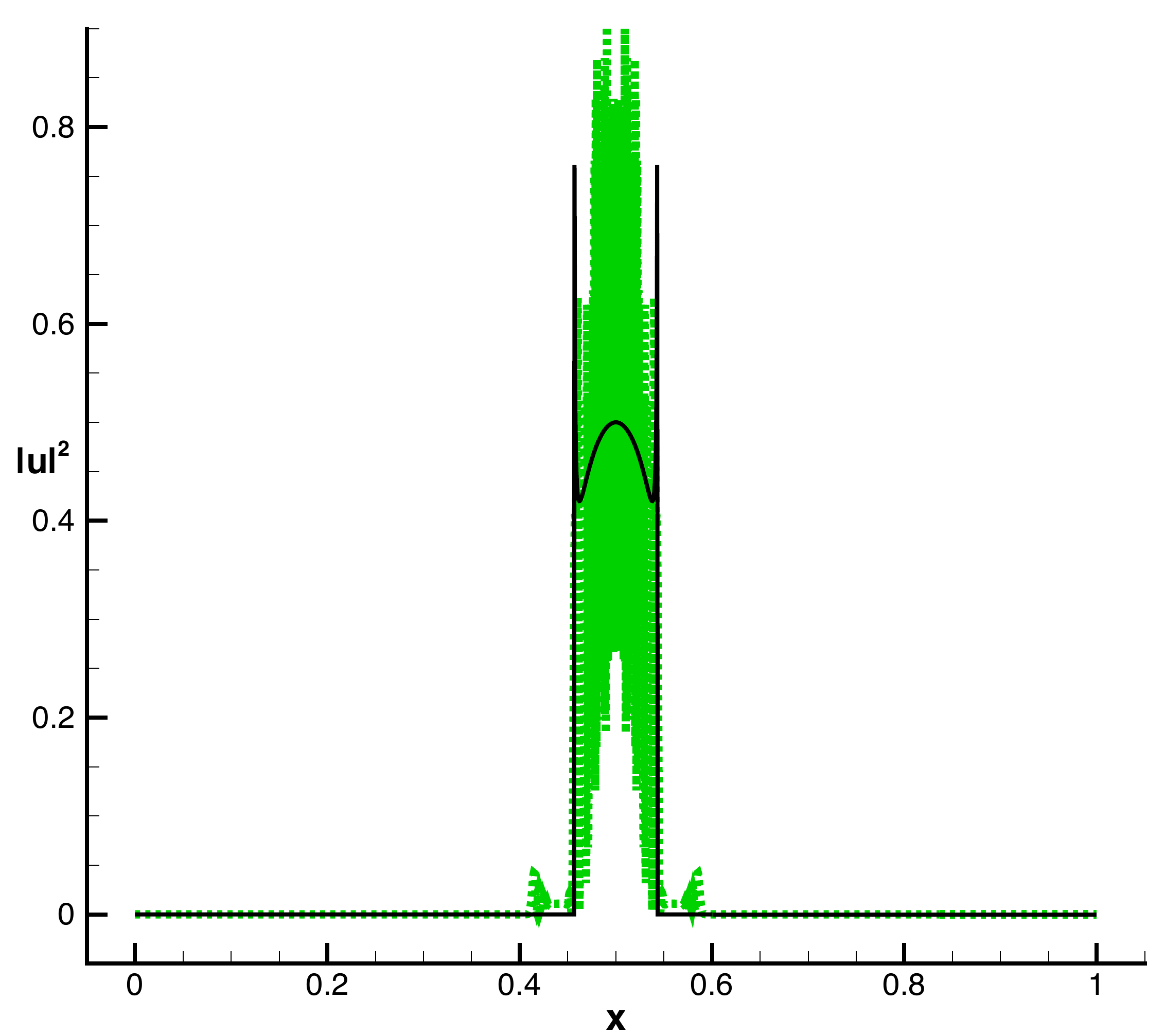} 
%
%
%
\caption{Position density at the final time $T=0.1$ in case $\ep=5\times10^{-5}.$ Solid line represents the semiclassical limit of the exact observable, while dot line represents the approximation using adaptivity (left) and uniform partition with the same DoF's (right).\label{obs41}}
}
\end{figure}
\begin{figure}[htb!]
%
{\hspace{-1.5cm}
\includegraphics[scale=0.36]{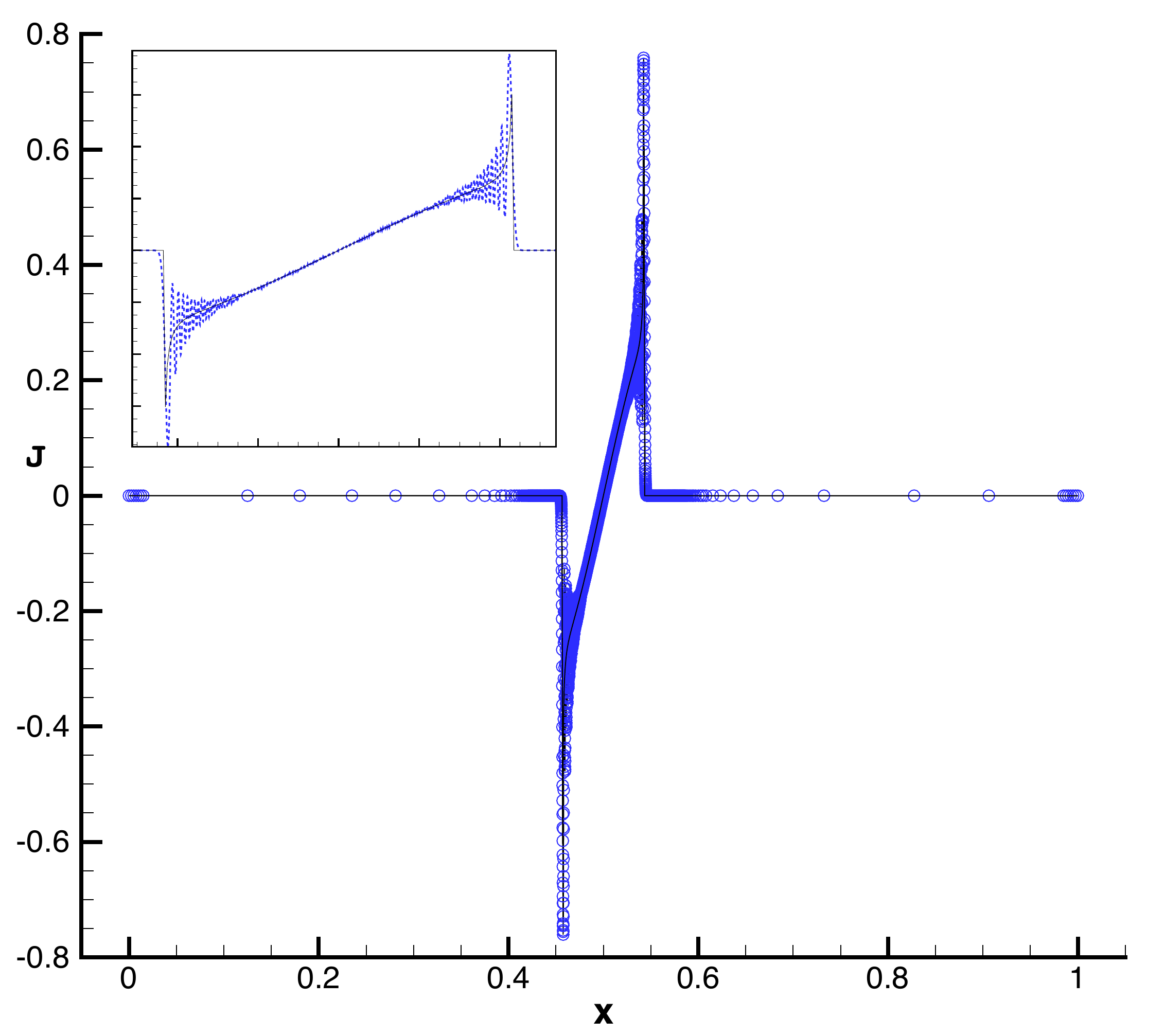}}
 {\hspace{0.5cm}
\includegraphics[scale=0.36]{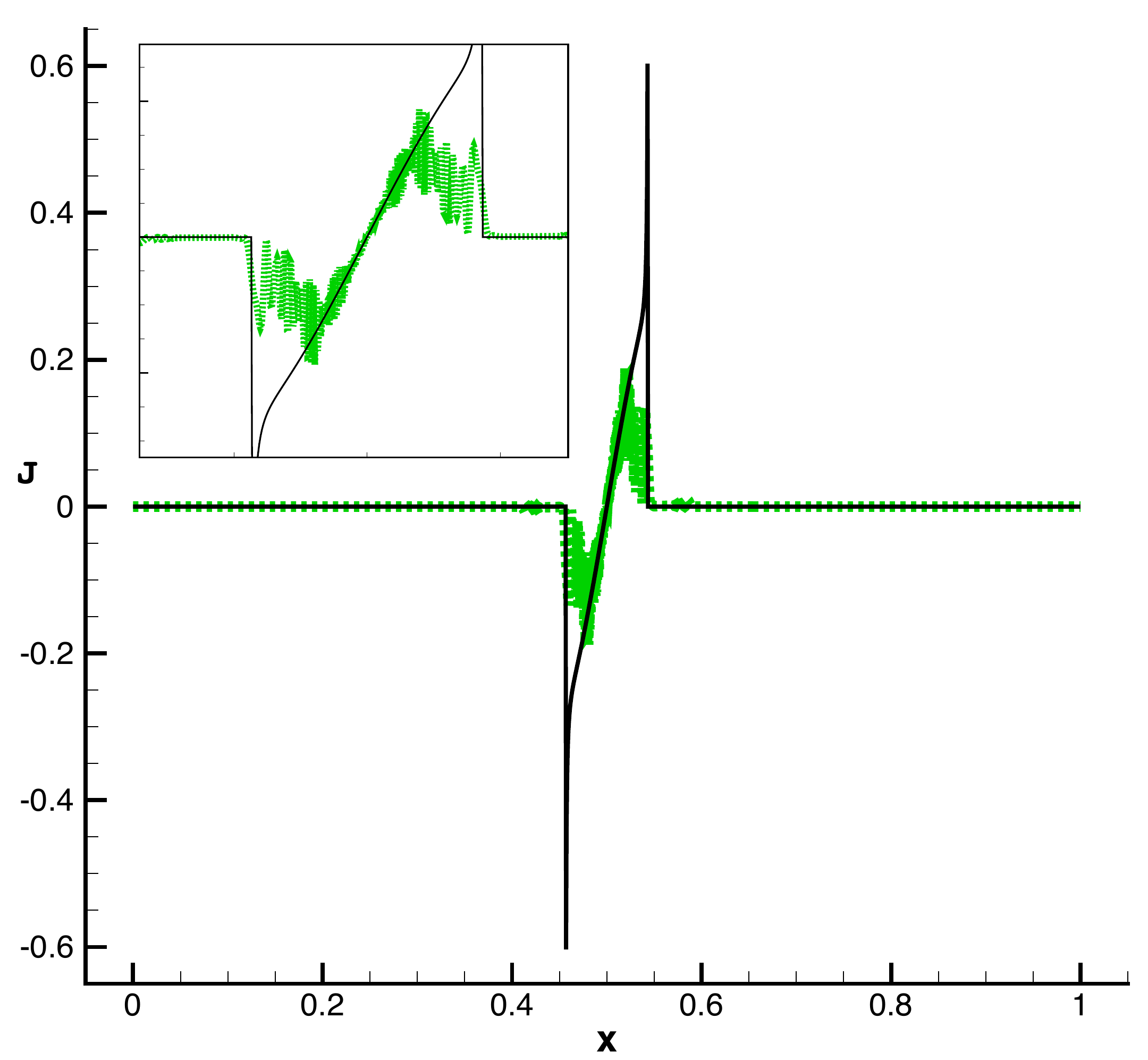} 
%
%
%
\caption{Position density at the final time $T=0.1$ in case $\ep=5\times10^{-5}.$ Solid line represents the semiclassical limit of the exact observable, while dotted line represents the approximation using adaptivity (left) and uniform partition with the same DoF's (right).\label{obs42}}
}
\end{figure}

For the first two tests, we take $[a,b]\times[0,T]=[-1,2]\times[0,0.54]$, $\lambda=5$ and $\ep=10^{-3}$ or $\ep=2.5\times10^{-4}$. Recall that the particular example, considered earlier in \cite{BJM}, is interesting because caustics are formed before the final time. For the case $\ep=10^{-3}$, we take $k=10^{-5}$ and discretize by quadratic  B-splines, whereas for $\ep=2.5\times 10^{-4}$, we take $k=3\times 10^{-6}$ and discretize by B-splines of degree $4$. In Figures~\ref{obs11}, \ref{obs21}, we plot the position density  using the adaptive algorithm (left plot) and uniform grid with the same degrees of freedom (right plot). The solid line corresponds to the semiclassical limit of the exact observable which is possible to compute for constant potentials.  The dotted lines correspond to the approximate observable. As we observe from these plots, the approximation using adaptivity is very good, while the one using uniform partition misses completely the angles and peaks. Similar comments can be made for the plots referring to the current density. These plots can be viewed in Figures~\ref{obs12} and \ref{obs22} for $\ep=10^{-3}$ and $2.5\times 10^{-4}$, respectively. In the plots concerning the 
approximations with space adaptivity, we also see the distribution of the grid points. It is remarkable that most of the points are concentrated close to the angles and peaks. On the contrary, very few points are placed around the endpoints, where the observables remain constant. The total number of degrees of freedom in adaptivity corresponds to $1458$ DoF's in each time-slot for $\ep=10^{-3}$ and to $3186$ for the case $\ep=2.5\times 10^{-4}$. The required degrees of freedom in each time-slot with uniform partition are more than $3000$ for $\ep=10^{-3}$ and more than $12 000$ for $\ep=2.5\times 10^{-4}$.
The first two tests indicate that the smaller the value of $\ep$ using adaptivity is very advantageous. To make this indication stronger, we perform a final test in which $[a,b]\times[0,T]=[0,1]\times[0,0.1]$, $\lambda=30$ and $\ep=5\times 10^{-5}$. This is another example where caustics are formed. We use cubic B-splines and $k=5\times 10^{-7}.$

In Figures~\ref{obs41},\ref{obs42}, we plot on the left the approximation with space adaptivity and on the right the corresponding with uniform partition and the same degrees of freedom. The result obtained using  the uniform partition is very poor. The approximate solution  misses  the angles and peaks, and, in fact, fails to approximate  the actual observables.  On the other hand, those obtained by adaptivity, appear to be very  good approximations. The number of total degrees of freedom in adaptivity corresponds to $3670$ DoF's in each time-slot, while the required DoF's in each time slot with uniform partition is more than $20000.$

This final set of experiments, indicates that the a posteriori error estimators can appropriately be used together with adaptive strategies not only for the efficient error control of the wave function $u$, but for the observables as well. The tests suggest that the computational cost is drastically reduced and the adaptive procedure gives encouraging results for small values of  the Planck constant $\ep$. However, no rigorous analysis has been provided and further numerical experiments including more general potentials need to be performed  in order to draw safe conclusions. This very interesting problem requires further investigation and will be the subject of a forthcoming work.  

\section*{Acknowledgments}
Some of the ideas  of the theoretical part of the paper are taken from second author's  Ph.D Thesis,
\cite{PhDKyza}. I.K. is grateful to her Ph.D advisor, Prof.\ Charalambos Makridakis for suggesting the problem and for his
academic guidance and support. The authors thank Prof.\
Georgios Akrivis for many helpful remarks.
%
%

%
\end{document}